\newcommand{\N}{\mathbb{N}}
\def\sideremark#1{\ifvmode\leavevmode\fi\vadjust{\vbox
to0pt{\vss \hbox to 0pt{\hskip\hsize\hskip1em
\vbox{\hsize2cm\tiny\raggedright\pretolerance10000
\noindent#1\hfill}\hss}\vbox to8pt{\vfil}\vss}}}
\newcommand{\vp}{\varepsilon}
\newtheorem{thm}{Theorem}[section]
\newtheorem{lem}[thm]{Lemma}
\newtheorem{cor}[thm]{Corollary}
\newtheorem{prop}[thm]{Proposition}
\theoremstyle{definition}
\newtheorem{defn}[thm]{Definition}          
\newcommand{\supp}{{\rm supp}}
\newcommand{\coo}{c_{00}}
\def\sfrac#1#2{\kern.1em\raise.5ex\hbox{$#1$}
        \kern-.1em/\kern-.05em\lower.25ex\hbox{$#2$}}
\title{Upper and lower estimates for Schauder frames and atomic decompositions.}
\date{\today}
\author{K. Beanland}
\address{Department of Mathematics and Applied Mathematics\\ Virginia Commonwealth University, Richmond, VA 23284}
\email{kbeanland@vcu.edu}
\author{D. Freeman}
\address{Department of Mathematics\\ The University of Texas at Austin, Austin, TX 78712-0257}
\email{freeman@math.utexas.edu}
\author{R. Liu}
\address{Department of Mathematics and LPMC\\ Nankai University, Tianjin 300071, P.R.
China} \email{ruiliu@nankai.edu.cn}
\thanks{The research of the first author was supported by a grant from the National Science Foundation.  The third author is supported by
the National Nature Science Foundation of China, the Fundamental
Research Fund for the Central Universities of China, and the Tianjin
Science \& Technology Fund.}
\keywords{Banach spaces; Frames; Atomic Decompositions}
\subjclass[2000]{Primary 46B20, Secondary 41A65}
\begin{document}

\begin{abstract}
We prove that a Schauder frame for any separable Banach space is
shrinking if and only if it has an associated space with a shrinking
basis, and that a Schauder frame for any separable Banach space is
shrinking and boundedly complete if and only if it has a reflexive
associated space.  To obtain these results, we prove that the upper
and lower estimate theorems for finite dimensional decompositions of
Banach spaces can be extended and modified to Schauder frames.  We
show as well that if a separable infinite dimensional Banach space
has a Schauder frame, then it also has a Schauder frame which is not
shrinking.

\end{abstract}\maketitle

\section{Introduction}\label{S:0}
  Frames for Hilbert spaces were
introduced by Duffin and Schaeffer  in 1952 \cite{DS} to address
some questions in non-harmonic Fourier series.  However, the current
popularity of frames is largely due to their successful application
to signal processing, initiated by Daubechies, Grossmann, and Meyer
 in 1986 \cite{DGM}.  A {\em frame} for an infinite dimensional
separable Hilbert space $H$ is a sequence of vectors
$(x_i)_{i=1}^\infty\subset H$ for which there exists constants
$0\leq A\leq B$ such that for any $x\in H$,
\begin{equation}
A\|x\|^2\leq \sum_{i=1}^\infty |\langle x, x_i\rangle|^2\leq
B\|x\|^2.
\end{equation}
If $A=B=1$, then $(x_i)_{i=1}^\infty$ is called a {\em Parseval}
frame. Given any frame $(x_i)_{i=1}^\infty$ for a Hilbert space $H$,
 there exists a frame $(f_i)_{i=1}^\infty$ for $H$, called an {\em
alternate dual frame}, such that for all $x\in H$,
\begin{equation}\label{E:0}
x=\sum_{i=1}^\infty \langle x, f_i\rangle x_i.
\end{equation}
The equality in (\ref{E:0}) allows the reconstruction of any vector
$x$ in the Hilbert space from the sequence of coefficients $(\langle
x, f_i\rangle)_{i=1}^\infty$.  The standard method to construct such
a frame $(f_i)_{i=1}^\infty$ is to take $f_i=S^{-1} x_i$ for all
$i\in\N$, where $S$ is the positive, self-adjoint invertible
operator on $H$ defined by $Sx=\sum_{i=1}^\infty \langle x,
x_i\rangle x_i$ for all $x\in H$. The operator $S$ is called the
{\em frame operator} and the frame $(S^{-1} x_i)_{i=1}^\infty$ is
called the {\em canonical dual frame} of $(x_i)_{i=1}^\infty$.

In their AMS memoir \cite{HL}, Han and Larson initiated studying the
dilation viewpoint of frames.  That is, analyzing frames as
orthogonal projections of Riesz bases, where a Riesz basis is an
unconditional basis for a Hilbert space. To start this approach,
they proved the following theorem.
\begin{thm}[\cite{HL}]\label{T:HilbertDilation}
If $(x_i)_{i=1}^\infty$ is a frame for a Hilbert space $H$, then
there exists a larger Hilbert space $Z\supset H$ and a Riesz basis
$(z_i)_{i=1}^\infty$ for $Z$ such that $P_X z_i=x_i$ for all
$i\in\N$, where $P_X$ is orthogonal projection onto $X$.
Furthermore, if $(x_i)_{i=1}^\infty$ is Parseval, then
$(z_i)_{i=1}^\infty$ can be taken to be an ortho-normal basis.
\end{thm}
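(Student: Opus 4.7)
The plan is to use the analysis operator to realize $H$ as a subspace of $\ell_2$ for the Parseval case, then reduce the general case to the Parseval case by an invertible change of coordinates on the ambient space.

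First I would handle the Parseval case ($A\keq B\keq 1$). Define the analysis operator $\theta:H\to\ell_2$ by $\theta(x)\keq(\la x,x_i\ra)_{i=1}^\infty$. The Parseval identity says $\|\theta x\|\keq\|x\|$, so $\theta$ is an isometric embedding, and a direct computation gives $\theta^*e_i\keq x_i$, whence $\theta^*\theta\keq I_H$. Set $Z\keq\ell_2$ and identify $H$ with its isometric image $M:=\theta(H)\subset Z$. Because $\theta^*\theta\keq I_H$, the operator $\theta\theta^*$ is an orthogonal projection on $Z$ with range $M$, so $P_H\keq\theta\theta^*$ under the identification. Letting $z_i\keq e_i$ be the standard orthonormal basis of $\ell_2$, we compute $P_H z_i\keq\theta\theta^*e_i\keq\theta x_i$, which corresponds to $x_i$ after identifying $H$ with $M$. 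This proves the ``Furthermore'' clause.

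For the general case, let $S$ be the frame operator of $(x_i)$. Then $S$ is positive, self-adjoint, and (because $A\kgr 0$) invertible, so $S^{1/2}$ is well-defined with the same properties, and $(S^{-1/2}x_i)_{i=1}^\infty$ is a Parseval frame since
\[
\sum_{i=1}^\infty|\la x,S^{-1/2}x_i\ra|^2=\la S(S^{-1/2}x),S^{-1/2}x\ra=\|x\|^2.
\]
Apply the Parseval case to $(S^{-1/2}x_i)$ to obtain a Hilbert space $Z\supset H$ and an orthonormal basis $(z_i)_{i=1}^\infty$ for $Z$ with $P_H z_i\keq S^{-1/2}x_i$. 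Now extend $S^{1/2}$ to an operator $T$ on all of $Z$ by declaring $T\keq S^{1/2}$ on $H$ and $T\keq I$ on $H^\perp$; equivalently, $T\keq S^{1/2}P_H\kplus(I\kminus P_H)$. Then $T$ is bounded, self-adjoint, and invertible on $Z$ (its inverse is $S^{-1/2}P_H\kplus(I\kminus P_H)$), so $(Tz_i)$ is the image of an orthonormal basis under an isomorphism, hence a Riesz basis for $Z$. Finally, $P_H T\keq S^{1/2}P_H$ by construction, giving $P_H(Tz_i)\keq S^{1/2}(S^{-1/2}x_i)\keq x_i$.

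The only delicate step is verifying that $\theta\theta^*$ really is the orthogonal projection onto $\theta(H)$ in the Parseval case; this follows cleanly from $\theta^*\theta\keq I_H$, which is itself just a restatement of the Parseval property. The rest is bookkeeping: the reduction via $S^{1/2}$ is standard and the extension of $S^{1/2}$ to $Z$ by the identity on $H^\perp$ is the natural way to preserve both invertibility (to ensure a Riesz basis) and the intertwining relation $P_H T\keq S^{1/2}P_H$ (to recover $x_i$ after projection).
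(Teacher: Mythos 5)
Your proof is correct, and it is essentially the standard Han--Larson argument that the paper simply cites from \cite{HL} without reproducing: realize the Parseval case by showing the analysis operator $\theta$ is an isometry with $\theta^*e_i=x_i$ and $\theta\theta^*$ the orthogonal projection onto $\theta(H)$, then reduce the general case to the Parseval one via $S^{\pm 1/2}$ and an invertible self-adjoint extension to all of $Z$. All the key verifications (that $(S^{-1/2}x_i)$ is Parseval, that $T=S^{1/2}P_H+(I-P_H)$ is invertible and intertwines correctly) check out, so there is nothing to add.
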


The concept of a frame was extended to Banach spaces in 1989 by
Grochenig \cite{G} through the introduction of atomic
decompositions. The main goal of the paper was to obtain for Banach
spaces the unique association of a vector with the natural set of
frame coefficients.  In 2008, Schauder frames for Banach space were
developed \cite{CDOSZ} with the goal of creating a procedure to
represent vectors using quantized coefficients. A Schauder frame
essentially takes, as its definition, an extension of the equation
(\ref{E:0}) to Banach spaces.

\begin{defn}
Let $X$ be an infinite dimensional separable Banach space. A
sequence $(x_i,f_i)_{i=1}^\infty\subset X\times X^*$ is called a
{\em Schauder frame} for $X$ if $x=\sum_{i=1}^\infty f_i(x) x_i$ for
all $x\in X$.
\end{defn}

In particular, if $(x_i)_{i=1}^\infty$ and $(f_i)_{i=1}^\infty$ are
frames for a Hilbert space $H$, then $(f_i)_{i=1}^\infty$ is an
alternate dual frame for $(x_i)_{i=1}^\infty$ if and only if
$(x_i,f_i)_{i=1}^\infty$ is a Schauder frame for $H$.  As noted in
\cite{CDOSZ}, a separable Banach space has a Schauder frame if and
only if it has the bounded approximation property.  By the uniform
boundedness principle, for any Schauder frame
$(x_i,f_i)_{i=1}^\infty$ of a Banach space $X$, there exists a
constant $C\geq 1$ such that $\sup_{n\geq m}\|\sum_{i=m}^n
f_i(x)x_i\|\leq C\|x\|$ for all $x\in X$. The least such value $C$
is called the {\em frame constant} of $(x_i,f_i)_{i=1}^\infty$. A
Schauder frame $(x_i,f_i)_{i=1}^\infty$ is called {\em
unconditional} if the series $x=\sum_{i=1}^\infty f_i(x) x_i$
converges unconditionally for all $x\in X$.
 The following definition
extends the notion of a basis being shrinking or boundedly complete to the context of frames.

\begin{defn}
Given a Schauder frame $(x_i,f_i)_{i=1}^\infty\subset X\times X^*$,
let $T_n:X\rightarrow X$ be the operator $T_n(x)=\sum_{i\geq
n}f_i(x) x_i$.  The frame $(x_i,f_i)_{i=1}^\infty$ is called {\em
shrinking} if $\|x^*\circ T_n\|\rightarrow0$ for all $x^*\in X^*$.
The frame $(x_i,f_i)_{i=1}^\infty$ is called {\em boundedly
complete} if $\sum_{i=1}^\infty x^{**}(f_i)x_i$ converges in norm to
an element of $X$ for all $x^{**}\in X^{**}$.
\end{defn}
As noted in \cite{CL}, if $(x_i)_{i=1}^\infty$ is a Schauder basis
and $(x^*_i)_{i=1}^\infty$ are the biorthogonal functionals of
$(x_i)_{i=1}^\infty$, then the frame $(x_i,x^*_i)_{i=1}^\infty$ is
shrinking if and only if the basis $(x_i)_{i=1}^\infty$ is
shrinking, and the frame $(x_i,x^*_i)_{i=1}^\infty$ is boundedly
complete if and only if the basis $(x_i)_{i=1}^\infty$ is boundedly
complete. Thus the definition of a frame being shrinking or
boundedly complete is consistent with that of a basis. In \cite{L},
the frame properties shrinking and boundedly complete are called
pre-shrinking and pre-boundedly complete.  The following definitions
allow the dilation viewpoint of Han and Larson to be extended to
Schauder frames.

\begin{defn}\label{D:0}
Let $(x_i,f_i)_{i=1}^\infty$ be a frame for a Banach space $X$ and
let $Z$ be a Banach space with basis $(z_i)_{i=1}^\infty$ and
coordinate functionals $(z_i^*)_{i=1}^\infty$. We call $Z$ an {\em
associated space} to $(x_i,f_i)_{i=1}^\infty$ and
$(z_i)_{i=1}^\infty$ an {\em associated basis }if the operators
$T:X\rightarrow Z$ and $S:Z\rightarrow X$ are bounded, where,
$T(x)=T(\sum f_i(x)x_i)=\sum f_i(x)z_i$ for all $x\in X$ and
$S(z)=S(\sum z^*_i(z) z_i)=\sum z^*_i(z) x_i$ for all $z\in Z.$
\end{defn}

Essentially, Theorem \ref{T:HilbertDilation} states that a frame for
a Hilbert space has an associated basis which is a Riesz basis for a
Hilbert space. Furthermore, the proof in \cite{HL} actually involves
constructing the operators $T$ and $S$ given in Definition
\ref{D:0}.  In \cite{CDOSZ}, it is shown that every Schauder frame
has an associated space, which is referred to as the minimal
associated space in \cite{L}. Furthermore, the minimal associated
basis will be unconditional if and only if the Schauder frame is
unconditional. On the other hand, if $(x_i,f_i)$ is a Schauder
frame, then the reconstruction operator, $S$, for the minimal
associated space contains $c_0$ in its kernel if and only if a
finite number of vectors can be removed from $(x_i)$ to make it a
Schauder basis \cite{LZ}.  Thus, except in trivial cases, the
minimal associated basis will not be boundedly complete. Given some
desirable property for a basis to have, it is natural to consider
the problem of characterizing whether or not a particular Schauder
frame has an associated basis with that property. It is not
difficult to see that if a Schauder frame has a shrinking associated
basis, then the frame must be shrinking as well, and that
 if a Schauder frame has a boundedly complete associated basis, then the frame must be boundedly
 complete.  Under additional assumptions, it is proven in \cite{L} that the minimal associated space to a frame is shrinking if the frame itself is shrinking
and that the maximal associated space to a frame is boundedly complete if the frame itself is boundedly complete.
One of our main theorems is to give the following complete characterization.

\begin{thm}\label{T:main}
Let $(x_i,f_i)_{i=1}^\infty$ be a Schauder frame for a Banach space
$X$.  Then $(x_i,f_i)_{i=1}^\infty$ is shrinking if and only if
$(x_i,f_i)_{i=1}^\infty$ has a shrinking associated basis.
Furthermore, $(x_i,f_i)_{i=1}^\infty$ is shrinking and boundedly
complete if and only if $(x_i,f_i)_{i=1}^\infty$ has a reflexive
associated space.
\end{thm}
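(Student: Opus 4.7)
The ``if'' directions of both biconditionals are essentially routine. Suppose $Z$ is an associated space to $(x_i, f_i)$ with basis $(z_i)$ and operators $T : X \to Z$, $S : Z \to X$ as in Definition \ref{D:0}; let $Q_n : Z \to Z$ be the tail projection onto $\spa(z_i)_{i \ge n}$. Then $T_n = S \circ Q_n \circ T$, so $x^* \circ T_n = T^*(Q_n^* S^* x^*)$ and $\|x^* \circ T_n\| \le \|T\|\,\|Q_n^* S^* x^*\|$. If $(z_i)$ is shrinking, then $\|Q_n^* z^*\| \to 0$ for every $z^* \in Z^*$, and the frame is shrinking. If in addition $Z$ is reflexive, then $(z_i)$ is boundedly complete and $T^{**} x^{**} \in Z^{**} = Z$ has basis expansion $\sum x^{**}(f_i) z_i$ (since $z_i^* \circ T = f_i$); applying the bounded operator $S$ then yields convergence of $\sum x^{**}(f_i) x_i$ in $X$, so the frame is boundedly complete.

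For the ``only if'' direction in the shrinking case, the minimal associated space $Z_{\min}$ typically has a non-shrinking basis, so I would replace it by a new associated space whose norm encodes an upper estimate derived from the shrinking hypothesis. Concretely, for a suitable class $\cA$ of admissible interval partitions of $\N$, define on $c_{00}$
\begin{equation*}
\Bigl\|\sum_i a_i z_i\Bigr\|_Z := \sup_{(I_k) \in \cA}\, \sum_k \Bigl\|\sum_{i \in I_k} a_i x_i\Bigr\|_X
\end{equation*}
(possibly iterated, or combined with an $\ell_r$-sum, to give more flexibility in choosing $\cA$); the completion $Z$ carries the natural basis $(z_i)$. Checking that $Z$ is associated reduces to producing a constant $C$ with $\|T x\|_Z \le C\|x\|$ for every $x \in X$, which is exactly an upper estimate for the Schauder frame along admissible blockings, and this is the main obstacle of the argument. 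I would prove it by contradiction via a gliding-hump extraction in $X^*$: a witness of failure would produce a block sequence in $X$ on which the tail functionals $T_n^* x^*$ do not vanish uniformly, against the shrinking hypothesis. Granted the upper estimate, shrinking of $(z_i)$ in $Z$ follows from a block basis argument: any seminormalized block of $(z_i)$ pushes forward under $S$ to a weakly null sequence in $X$ by shrinking of the frame, and the definition of $\|\cdot\|_Z$ converts this into $\|Q_n^* z^*\| \to 0$ for every $z^* \in Z^*$.

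For the second biconditional, once the shrinking case is settled it remains to handle the frame that is simultaneously shrinking and boundedly complete; the target is an associated space that is both shrinking and boundedly complete, so that James' theorem delivers reflexivity. I would refine $\|\cdot\|_Z$ above by coupling its upper estimate with a dual lower estimate: use the vectors $(x^{**}(f_i))$ for $x^{**} \in X^{**}$ to force uniform boundedness of partial sums $\sum_{i=1}^N x^{**}(f_i) z_i$ in $Z$, which makes the associated basis boundedly complete. The subtle point is that both estimates must be witnessed by the \emph{same} basis in the \emph{same} norm; one cannot simply intersect a shrinking associated space with a boundedly complete one, because they generally do not share a common associated basis that factors both $T$ and $S$. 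This is what compels a single construction of $\|\cdot\|_Z$ carrying both estimates, and is precisely the content of the upper and lower estimate theorems for Schauder frames announced in the abstract.
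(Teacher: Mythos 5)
Your ``if'' directions are correct and coincide with what the paper treats as routine: the factorization $T_n=S\circ Q_n\circ T$ and the identification $T^{**}x^{**}=\sum x^{**}(f_i)z_i$ are exactly the right observations. The gap is in the ``only if'' directions, and it is not a detail but the entire content of the theorem. The norm you propose, $\sup_{(I_k)\in\cA}\sum_k\bigl\|\sum_{i\in I_k}a_ix_i\bigr\|_X$, places an $\ell_1$-sum outside the block norms, and for $T$ to be bounded you would need $\sum_k\bigl\|\sum_{i\in I_k}f_i(x)x_i\bigr\|\leq C\|x\|$ uniformly over $\cA$. If $\cA$ contains partitions with arbitrarily many blocks this fails already for the orthonormal basis of $\ell_2$ viewed as a frame (take $I_k=\{k\}$), so everything hinges on the unspecified choice of $\cA$; ``suitable class of admissible partitions, possibly iterated or combined with an $\ell_r$-sum'' is precisely the part that has to be invented. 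The paper's resolution is to take the outer norm to be that of a higher-order Tsirelson space $T^*_{\alpha,c}$ (the space $Z_V(p_i)$ of Odell--Schlumprecht--Zs\'ak), with $\alpha$ calibrated by the Szlenk index of $X$, which is countable because a shrinking frame forces $X^*$ to be separable. The boundedness of the resulting map is then obtained not by a gliding-hump contradiction but by a skipped-blocking decomposition of each $y^*\in X^*$ into pieces $y^*_\ell$ almost supported between prescribed skipped coordinates (Theorem \ref{T:1.4}, based on Johnson's blocking argument), combined with a game-theoretic characterization of upper tree estimates (Proposition \ref{P:skipping} and Corollary \ref{C:blocking}).

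Two further ingredients are missing from your sketch. First, before any duality argument can be run one needs an associated basis relative to which the frame is \emph{strongly} shrinking (Theorem \ref{T:1.2}); without this, $S^*$ need not map $X^*$ into $[z_i^*]$ and the formula $S^*(x^*)=\sum x^*(x_i)z^*_i$, on which the whole estimate in Theorem \ref{T:1.5} rests, is unavailable. Your norm does not build this in. Second, for the reflexive case, ``forcing uniform boundedness of the partial sums $\sum_{i=1}^N x^{**}(f_i)z_i$'' is not a construction: the paper obtains the lower estimates by running the entire upper-estimate machinery a second time on the dual frame $(f_i,x_i)$ of $X^*$ (Theorem \ref{T:1.7}, using Corollary \ref{C:1.1}), and reflexivity then follows because $T_{\alpha,c}$ is reflexive. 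You correctly identified that both estimates must be realized by one basis in one norm, but the proposal as written contains neither the calibration needed to make the upper estimate true nor the mechanism that produces it.
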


To obtain Theorem \ref{T:main}, we prove a stronger result, involving quantitative bounds on the Szlenk index.
  The Szlenk index \cite{Sz} is a co-analytic rank on
the set of Banach spaces with separable dual, and was created to
prove that there does not exist a single Banach space $X$ with
separable dual such that every Banach space with separable dual is
isomorphic to a subspace of $X$.  In particular, the Szlenk index of
a Banach space is countable if and only if the Banach space has
separable dual. In \cite{OSZ2}, it is shown that the higher order
Tsirelson spaces $T_{\alpha,c}$, where $\alpha$ is a countable
ordinal and $0<c<1$, can be used to measure the Szlenk index through
the use of tree estimates.  Furthermore, a Banach space with
separable dual has Szlenk index at most $\omega^{\alpha\omega}$ for
some given countable ordinal $\alpha$, if and only if the Banach
space satisfies subsequential $T_{\alpha,c}$-upper tree estimates
for some constant $0 < c < 1$. Thus proving theorems about upper
tree estimates provides quantitative insight into Banach spaces with
separable dual, and similarly, proving theorems about upper and
lower tree estimates provides quantitative insight into separable
reflexive Banach spaces.  In \cite{OSZ1}, a characterization is
given for when a separable reflexive Banach space embeds into a
Banach space with an FDD satisfying certain upper and lower block
estimates, and in \cite{FOSZ}, a characterization is given for when
a Banach space with separable dual embeds into a Banach space with
an FDD satisfying certain upper block estimates.  We extend both of
those theorems to frames and by applying them to $T_\alpha$ and
$T_\alpha^*$,
 we obtain the following two charachterizations.

\begin{thm}\label{T:mainUpper}
Let $(x_i,f_i)_{i=1}^\infty$ be a shrinking Schauder frame for a Banach space $X$ and let $\alpha$ be a countable ordinal.
Then, the following are equivalent.
\begin{enumerate}
\item [(a)] $X$ has Szlenk index at most $\omega^{\alpha\omega}$.
\item [(b)] $X$ satisfies
subsequential $T_{\alpha,c}$-upper tree estimates for some constant
$0<c<1$.
\item [(c)] $(x_i,f_i)_{i=1}^\infty$ has an associated basis $(z_i)_{i=1}^\infty$ such that there exists $(n_i)_{i=1}^\infty,(K_i)_{i=1}^\infty\in[\N]^\omega$ and $0<c<1$
so that the FDD $(span_{j\in[n_i,n_{i+1})}z_i)_{i=1}^\infty$
satisfies subsequential
 $(t_{K_i})_{i=1}^\infty$ upper block estimates, where $(t_i)_{i=1}^\infty$ is the unit vector basis for $T_{\alpha,c}$.
\end{enumerate}
\end{thm}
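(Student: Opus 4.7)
The strategy is to derive Theorem \ref{T:mainUpper} by combining two ingredients: the general characterization of Szlenk index via higher-order Tsirelson upper tree estimates from \cite{OSZ2}, which yields (a) $\iff$ (b) for any Banach space with separable dual; and a general upper-estimate theorem for shrinking Schauder frames, which I will establish elsewhere in the paper as a frame-version of the main result of \cite{FOSZ}. Applying that general theorem with $(t_i)$ taken as the unit vector basis of $T_{\alpha,c}$ yields the equivalence of (b) and (c).

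For (c) $\Rightarrow$ (b), assume the frame has an associated basis $(z_i)$ whose FDD blocking $(E_i) = (\spa_{j\in[n_i,n_{i+1})}\{z_j\})$ satisfies subsequential $(t_{K_i})$-upper block estimates. The Schauder frame relations give $ST = I_X$, so $X$ sits as a complemented subspace of the associated space $Z$ via $T$. Given a normalized weakly null tree in $X$, I push it into $Z$ via $T$; since the frame is shrinking, Theorem \ref{T:main} lets us take $(z_i)$ shrinking as well, and a standard gliding-hump perturbation produces a block tree of $(E_i)$ arbitrarily close to the image tree. Applying the upper block estimates there and then $S$ transfers the inequality back to $X$ up to constants depending on $\|T\|$ and $\|S\|$, yielding subsequential $T_{\alpha,c'}$-upper tree estimates for some $0 < c' < 1$.

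For (b) $\Rightarrow$ (c), which is the main work, begin with any associated basis $(y_i)$ for the frame, whose existence is guaranteed by \cite{CDOSZ}. A blocking argument modeled on \cite{FOSZ} is then performed: using the assumed subsequential $T_{\alpha,c}$-upper tree estimates on $X$, construct inductively an increasing sequence $(n_i)$ determining the FDD blocking of $(y_i)$ and a subsequence $(K_i)$ so that every normalized block vector of $(E_i) = (\spa_{j\in[n_i,n_{i+1})}\{y_j\})$ corresponds under $S$ to an almost-blocked vector in $X$ whose $T_{\alpha,c}$-norm is controlled at stage $K_i$. At each stage the upper tree estimate on $X$ is used to bound norms of arbitrary linear combinations of the block vectors already constructed, while compactness/separability of weak-$*$ null sequences in $X^*$ provides the control needed to choose the next block.

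The principal obstacle is that the correspondence between block sequences in the associated space and vectors in $X$ is only approximate: a vector in $E_i$ need not map under $S$ to an element of $X$ whose frame expansion has support inside $[n_i, n_{i+1})$, and a vector of $X$ need not lift under $T$ to a block of $(E_i)$. The blocking construction must therefore alternate between $X$ and $Z$, using the tree estimates on $X$ to pick the Tsirelson index $K_i$ and a gliding-hump approximation in $Z$ (exploiting the shrinking of $(y_i)$) to pick the FDD block endpoint $n_i$, with error terms at each stage absorbed into the block structure of the next stage via a rapid-growth condition on $(n_i)$ and $(K_i)$.
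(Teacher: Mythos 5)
Your overall architecture matches the paper's: (a)$\iff$(b) is quoted from \cite{OSZ2}, (c)$\implies$(b) follows because upper tree estimates pass from an FDD with upper block estimates to the subspace $T(X)$, and the substance is a frame version of \cite{FOSZ} giving (b)$\implies$(c). (Minor point: in (c)$\implies$(b) you invoke Theorem \ref{T:main} to make $(z_i)$ shrinking, which is circular since that theorem is deduced from this one; it is also unnecessary, because a weakly null node already has a subsequence that is a small perturbation of a block sequence of the FDD.) However, your sketch of (b)$\implies$(c) has a genuine gap: you propose only to \emph{block} an arbitrary associated basis $(y_i)$ along a sequence $(n_i)$, but no blocking of a fixed associated basis can produce upper block estimates. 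The associated space $Z$ is in general strictly larger than $X$ (the reconstruction operator $S$ has a large kernel -- for the minimal associated space of \cite{CDOSZ} it typically contains $c_0$), and the norm of a block vector in $Z$ lying in or near $\ker S$ is completely unconstrained by any tree estimate satisfied by $X$. The conclusion (c) can only be reached by \emph{renorming}: one must build a new associated space, and your proposal never says what the new norm is.

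The paper supplies the two ingredients you are missing. First, one cannot start from an arbitrary associated basis: one must first pass to an associated basis relative to which the frame is \emph{strongly shrinking} (Theorem \ref{T:1.2}), since only then does $S^*$ map $X^*$ into $[z_i^*]$ by $S^*(x^*)=\sum x^*(x_i)z_i^*$ (Theorem \ref{T:CL1.8}); this duality is what lets estimates on $X^*$ be read off in the coordinate system of $Z^*$. (The paper notes that \cite{CL} exhibits shrinking frames that are \emph{not} strongly shrinking relative to given associated bases, so this step is not automatic.) Second, the upper estimates are obtained by duality rather than directly: $X^*$ satisfies subsequential $V^*$-lower $w^*$ tree estimates (Lemma \ref{L:FOSZ2.7}), the skipped-blocking decomposition of Theorem \ref{T:1.4} (your ``alternating between $X$ and $Z$'' step, which you do describe correctly in spirit) writes each $y^*\in S_{X^*}$ as a sum of almost-blocked pieces, and then the norm on $[z_i^*]$ is \emph{replaced} by the $Z^*_{(v^*_{K_i})}(n_i)$ norm of \cite{OSZ1}, which by construction carries lower block estimates; boundedness of $S^*$ into this renormed space is the content of Theorem \ref{T:1.5}, and only after dualizing back (Proposition \ref{P:OSZ2.14}) does one obtain an associated space whose blocked FDD has the desired \emph{upper} block estimates. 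Without the renorming and the passage through the dual, the inductive choice of $(n_i)$ and $(K_i)$ you describe has nothing to attach the Tsirelson estimates to.
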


\begin{thm}\label{T:mainReflexive}

Let $(x_i,f_i)_{i=1}^\infty$ be a shrinking and boundedly complete Schauder frame for a Banach space $X$.  Then, the following are equivalent.
\begin{enumerate}
\item[(a)] $X$ and $X^*$ both have Szlenk index at most $\omega^{\alpha\omega}$.
\item[(b)] $X$ satisfies
subsequential $T_{\alpha,c}$-upper tree estimates and subsequential
$T^*_{\alpha,c}$-lower tree estimates  for some constant $0<c<1$.
\item[(c)] $(x_i,f_i)_{i=1}^\infty$ has an associated basis $(z_i)_{i=1}^\infty$ such that there exists
$(n_i)_{i=1}^\infty,(K_i)_{i=1}^\infty\in[\N]^\omega$ and $0<c<1$ so
that the FDD $(span_{j\in[n_i,n_{i+1})}z_i)_{i=1}^\infty$ satisfies
subsequential
 $(t_{K_i})_{i=1}^\infty$ upper block estimates and subsequential
 $(t^*_{K_i})_{i=1}^\infty$ lower block estimates, where $(t_i)_{i=1}^\infty$ is the unit vector basis for $T_{\alpha,c}$.
\end{enumerate}
\end{thm}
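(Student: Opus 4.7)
The plan is to establish the chain $(a) \Leftrightarrow (b) \Leftrightarrow (c)$, with the bulk of the work concentrated in $(b) \Rightarrow (c)$. First, $(a) \Leftrightarrow (b)$ should fall out of the Odell--Schlumprecht--Zsak characterization \cite{OSZ2} applied separately to $X$ and to $X^*$: since $(x_i,f_i)_{i=1}^\infty$ is shrinking, $X^*$ is separable, and an upper $T_{\alpha,c}$-tree estimate on $X^*$ dualizes in the standard way to a lower $T^*_{\alpha,c}$-tree estimate on $X$, so the Szlenk-index bounds on $X$ and on $X^*$ together translate into the two-sided tree-estimate condition.

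For $(c) \Rightarrow (b)$, I would transfer the FDD block estimates back to tree estimates on $X$ through the associated-basis operators $T : X \to Z$ and $S : Z \to X$ from Definition \ref{D:0}. Given a normalized weakly null tree in $X$, push it forward by $T$ to a weakly null tree in $Z$; a standard sliding-hump plus small-perturbation argument produces a subtree that is nearly block with respect to the FDD $(\spa_{j\in[n_i,n_{i+1})} z_j)$. The assumed upper and lower block estimates then yield $(t_{K_i})$-upper and $(t^*_{K_i})$-lower estimates on the image tree, which are pulled back to $X$ through $S$ and through the uniform norm equivalence between $\|\cdot\|_X$ and $\|T\cdot\|_Z$ restricted to branches.

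The main step is $(b) \Rightarrow (c)$, which I would prove by adapting the proof of Theorem \ref{T:mainUpper} symmetrically so as to treat both estimates at once. Starting from Theorem \ref{T:main}, the frame already has a reflexive associated basis $(z_i)_{i=1}^\infty$ in some space $Z$. I would then reblock $(z_i)$ by choosing $(n_i) \in [\N]^\omega$ and renorm $Z$ by a mixed $T_{\alpha,c}$/$T^*_{\alpha,c}$ prescription, in the spirit of the FDD constructions in \cite{OSZ1} and \cite{FOSZ} but recast in terms of the Schauder frame $(x_i,f_i)_{i=1}^\infty$. A diagonal blocking argument, using the shrinking hypothesis on $(x_i,f_i)$ for the upper side and the boundedly complete hypothesis (equivalently, the shrinking hypothesis on the dual frame) for the lower side, should let me extract $(n_i)$ and $(K_i)$ so that every normalized block vector with respect to the FDD is close to a branch of a weakly null tree either in $X$ or in $X^*$; the tree estimates supplied by $(b)$ then translate into the desired FDD block estimates.

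The main obstacle I expect is achieving both estimates simultaneously with a single blocking $(n_i)$ and a single subsequence $(K_i)$: the renorming that enforces the upper block estimate must not destroy the lower estimate, and vice versa. This will require stabilizing the frame and its dual frame together through a single diagonal extraction, and then checking that after the renorming the analysis and synthesis operators of Definition \ref{D:0} remain bounded, so that the new basis is genuinely associated to $(x_i,f_i)_{i=1}^\infty$. The shrinking and boundedly complete hypotheses are exactly what keep the renormed $Z$ reflexive throughout, matching the conclusion of Theorem \ref{T:main}.
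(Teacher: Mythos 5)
Your outline of $(a)\Leftrightarrow(b)$ via \cite{OSZ2} and of $(c)\Rightarrow(b)$ via the embedding $T:X\to Z$ matches the paper. The problem is in $(b)\Rightarrow(c)$, and it is twofold. First, you begin by invoking Theorem \ref{T:main} to obtain a reflexive associated basis; but in the paper Theorem \ref{T:main} is \emph{deduced from} Theorems \ref{T:mainUpper} and \ref{T:mainReflexive}, so using it here is circular. The existence of any reflexive (or even strongly shrinking) associated basis for a shrinking, boundedly complete frame is exactly the hard content you are being asked to prove; the paper has to build it from scratch, starting with Theorem \ref{T:1.2} (every shrinking frame is strongly shrinking relative to \emph{some} associated basis, constructed by an explicit norm on $\coo$) before any block estimates can even be discussed.

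Second, you correctly identify the crux --- obtaining the upper and the lower block estimates with a single blocking $(n_i)$ and subsequence $(K_i)$ without one renorming destroying the other --- but your proposed resolution (``a single diagonal extraction'' enforcing a ``mixed prescription'') is not an argument, and a genuinely simultaneous renorming is not what the paper does. The paper's route (Theorem \ref{T:1.7}) is sequential: first Theorem \ref{T:1.6} produces an associated space whose FDD satisfies the subsequential $(t_{K_i})$-upper block estimates (itself obtained by dualizing, applying Theorem \ref{T:1.5} to get $Z^*_{(t^*_{K_i})}(n_i)$, and passing to the predual); then, since $X^*$ satisfies $(t^*_i)^{(*)}$-upper tree estimates and the dual frame $(f_i,x_i)$ is strongly shrinking relative to $(z_i^*)$ by Corollary \ref{C:1.1}, Theorem \ref{T:1.5} is applied a second time, to $X^*$, yielding the $Z_V(p_i)$-type renorming that enforces the lower estimates. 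Compatibility of the two steps is not achieved by a clever joint extraction but by the last clause of Proposition \ref{L:OSZ3.2}: because $(t^*_i)$ is dominated by $(t_i)$, the second renorming \emph{preserves} the subsequential $(t_{K_i})$-upper block estimates already installed (and left/right dominance lets one nest the index sequences). Without citing or reproving that preservation lemma, your plan has no mechanism to rule out the second stage undoing the first, which is the very obstacle you flag.
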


The Banach space $T_{\alpha,c}$ is reflexive, and thus the basis
$(z_i)_{i=1}^\infty$ given in Theorem \ref{T:mainUpper} is shrinking
and the basis $(z_i)_{i=1}^\infty$ given in Theorem
\ref{T:mainReflexive} is shrinking and boundedly complete.  Thus
Theorem \ref{T:main} follows immediately from Theorems
\ref{T:mainUpper} and \ref{T:mainReflexive}.

Both Schauder frames and atomic decompositions are natural
extensions of frame theory into the study of Banach space. These two
concepts are directly related, and some papers in the area such as
\cite{CHL}, \cite{CL}, and \cite{CLS} are stated in terms of atomic
decompositions, while others such as \cite{CDOSZ}, \cite{L}, and
\cite{LZ} are stated in terms of Schauder frames.

\begin{defn}\label{D:AD}
Let $X$ be a Banach space and $Z$ be a Banach sequence space. We say
that a sequence of pairs $(x_i,f_i)_{i=1}^\infty\subset X\times X^*$
is an {\em atomic decomposition} of $X$ with respect to $Z$ if there
exists positive constants $A$ and $B$ such that for all $x \in X$:
\begin{enumerate}
\item [(a)] $(f_i(x_i))_{i=1}^\infty\in Z$,
\item [(b)] $A\|x\|\leq \|(f_i(x_i))_{i=1}^\infty\|_Z\leq B\|x\|$,
\item [(c)] $x=\sum_{i=1}^\infty f_i(x) x_i$.
\end{enumerate}
\end{defn}

If the unit vectors in the Banach space $Z$ given in Definition
\ref{D:AD} form a basis for $Z$, then an atomic decomposition is
simply a Schauder frame with a specified associated space $Z$.
    We choose to use
the terminology of Schauder frames for this paper instead of atomic
decomposition as to us, an associated space is an object which is
external to the space $X$ and frame $(x_i,f_i)_{i=1}^\infty$.  Our
goals are essentially, to construct `nice' associated spaces, given
a particular Schauder frame.  However, our theorems can be stated in
terms of atomic decompositions.  In particular, Theorem \ref{T:main}
can be stated as the following.

\begin{thm}
 Let $X$ be a Banach space and $Z$ be a Banach sequence space whose unit vectors form a basis for $Z$.
Let $(x_i,f_i)_{i=1}^\infty$ be an atomic decomposition of $X$ with
respect to $Z$.  Then $(x_i,f_i)_{i=1}^\infty$ is shrinking if and
only if there exists a Banach sequence space $Z'$ whose unit vectors
form a shrinking basis for $Z'$ such that $(x_i,f_i)_{i=1}^\infty$
is an atomic decomposition of $X$ with respect to $Z'$. Furthermore,
$(x_i,f_i)_{i=1}^\infty$ is shrinking and boundedly complete if and
only if there exists a reflexive Banach sequence space $Z'$ whose
unit vectors form a basis for $Z'$ such that
$(x_i,f_i)_{i=1}^\infty$ is an atomic decomposition of $X$ with
respect to $Z'$.
\end{thm}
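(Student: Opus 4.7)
The plan is to show that this theorem is an essentially immediate corollary of Theorem \ref{T:main}, once one makes explicit the dictionary between atomic decompositions with respect to basic sequence spaces and Schauder frames with specified associated spaces.

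First I would verify the following correspondence. Suppose $(x_i,f_i)_{i=1}^\infty$ is an atomic decomposition of $X$ with respect to a Banach sequence space $Z$ whose unit vectors $(e_i)_{i=1}^\infty$ form a basis. Condition (c) of Definition \ref{D:AD} says $(x_i,f_i)_{i=1}^\infty$ is a Schauder frame for $X$; the map $T:X\to Z$, $T(x)=\sum f_i(x)e_i$, is well-defined and bounded by conditions (a) and (b); and the natural reconstruction map $S:Z\to X$ with $S(e_i)=x_i$ is bounded (using the frame constant together with the basis constant of $(e_i)$ in $Z$) and satisfies $S\circ T=I_X$. Thus $Z$ becomes an associated space of the Schauder frame in the sense of Definition \ref{D:0}, with $(e_i)$ as associated basis. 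Conversely, any Schauder frame $(x_i,f_i)_{i=1}^\infty$ with an associated basis $(z_i)$ in some space $W$ can be identified with an atomic decomposition with respect to the sequence space realization $Z'$ of $W$ obtained by using $(z_i)$ as the unit vectors: the upper inequality in (b) follows from $T$ being bounded, the lower one from $\|x\|=\|ST(x)\|\leq\|S\|\cdot\|T(x)\|$, condition (a) from $T(x)\in Z'$, and (c) from the Schauder frame property. Under this dictionary, the properties ``shrinking'' and ``shrinking and boundedly complete'' for the atomic decomposition are precisely those properties for the underlying Schauder frame.

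With the translation in place, both equivalences follow directly from Theorem \ref{T:main}. For the forward direction of the shrinking equivalence, suppose $(x_i,f_i)_{i=1}^\infty$ is a shrinking atomic decomposition with respect to $Z$. Under the dictionary it is a shrinking Schauder frame, so by Theorem \ref{T:main} it has an associated basis $(z_i)$ in some space $W$ that is shrinking; realizing $W$ as a sequence space $Z'$ with unit vectors $(z_i)$ gives the required $Z'$. The converse is immediate from the easy half of Theorem \ref{T:main}: if the unit vectors of $Z'$ form a shrinking basis and $(x_i,f_i)_{i=1}^\infty$ is an atomic decomposition with respect to $Z'$, then $Z'$ is an associated space with a shrinking associated basis, forcing the frame to be shrinking. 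The reflexive case is handled analogously, using that a basis is simultaneously shrinking and boundedly complete precisely when the ambient space is reflexive. The main (and essentially only nontrivial) step is the careful verification of the dictionary between atomic decompositions and Schauder frames with associated spaces; after that, the theorem is simply a restatement of Theorem \ref{T:main} in new terminology.
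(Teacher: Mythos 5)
Your proposal is correct and follows exactly the paper's route: the paper offers no separate proof of this theorem, presenting it as an immediate restatement of Theorem \ref{T:main} via the observation that an atomic decomposition with respect to a space whose unit vectors form a basis is precisely a Schauder frame with that specified associated space. The only point to watch is your parenthetical claim that the synthesis operator $S(e_i)=x_i$ is bounded on all of $Z$ ``using the frame constant together with the basis constant of $(e_i)$'': conditions (a)--(c) of Definition \ref{D:AD} only control the norm on the range of the analysis operator $T$, so boundedness of $S$ off that subspace is really an implicit part of the identification the paper itself makes rather than a consequence of those two constants --- though this affects only the easy (``only if'') directions, since in the forward directions the space $Z'$ produced by Theorem \ref{T:main} comes equipped with a bounded $S$ by construction.
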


The third author would like to express his gratitude to Edward Odell
for inviting him to visit the University of Texas at Austin in the
fall of 2010 and spring of 2011.

\section{Shrinking and boundedly complete Schauder Frames}

It is well known that a basis $(x_i)$ for a Banach space $X$ is shrinking if and only if the biorthogonal functionals $(x_i^*)$ form a
boundedly complete basis for $X^*$.
The following theorem extends this useful characterization to Schauder frames.

\begin{thm}\cite[Proposition 2.3]{CL}\cite[Proposition 4.8]{L}\label{T:CL2.3}
Let $X$ be a Banach space with a Schauder frame $(x_i,f_i)_{i=1}^\infty\subset X\times X^*$.  The frame
 $(x_i,f_i)_{i=1}^\infty$ is shrinking
 if and only if $(f_i,x_i)_{i=1}^\infty$ is a boundedly complete Schauder frame for $X^*$.
\end{thm}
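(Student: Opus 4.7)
The plan is to reduce the equivalence to a clean duality between the partial-sum operators of $(x_i,f_i)$ on $X$ and those of $(f_i,x_i)$ on $X^*$. Define $P_n\colon X\to X$ by $P_n(x)=\sum_{i=1}^n f_i(x)x_i$, so that $T_{n+1}=I-P_n$ in the notation of the excerpt. A direct computation of the adjoint gives
\[
P_n^*(x^*)=\sum_{i=1}^n x^*(x_i)\,f_i\qquad\text{for all }x^*\in X^*,
\]
and therefore $\|x^*\circ T_{n+1}\|=\|T_{n+1}^*x^*\|=\|x^*-P_n^*x^*\|$. This identity is the workhorse of the whole argument.

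For the forward direction, suppose $(x_i,f_i)$ is shrinking. Then the identity above immediately yields $\|x^*-\sum_{i=1}^{n}x^*(x_i)f_i\|\to 0$ for every $x^*\in X^*$, which is precisely the statement that $(f_i,x_i)$ is a Schauder frame for $X^*$. To obtain bounded completeness of this frame, take any $\Phi\in (X^*)^{**}=X^{***}$ and let $y^*=\Phi|_X\in X^*$. Since each $x_i$ sits in $X\subset X^{**}$, one has $\Phi(x_i)=y^*(x_i)$, so
\[
\sum_{i=1}^{n}\Phi(x_i)\,f_i=\sum_{i=1}^{n}y^*(x_i)\,f_i=P_n^*y^*\longrightarrow y^*,
\]
where the last convergence is in norm by the frame identity just proved for $(f_i,x_i)$. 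Hence the series defining bounded completeness converges in $X^*$, as required.

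For the backward direction, if $(f_i,x_i)$ is a (boundedly complete) Schauder frame for $X^*$, then in particular the frame identity $x^*=\sum x^*(x_i)f_i$ holds in norm for each $x^*\in X^*$. Combined with the adjoint identity, this is exactly $\|x^*\circ T_{n+1}\|\to 0$, so $(x_i,f_i)$ is shrinking. Note that bounded completeness of $(f_i,x_i)$ is not even needed here; it comes for free in the other direction precisely because the analyzing functionals $x_i$ lie in $X$ rather than in the larger space $X^{**}$, which is what collapses $X^{***}$-convergence onto $X^*$-convergence.

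There is no real obstacle in this argument beyond keeping the dualities straight: the only subtlety is recognizing that evaluation of a functional in $X^{***}$ at the vectors $x_i\in X$ depends only on its restriction to $X$, which is what makes bounded completeness of the dual frame automatic once the Schauder frame formula on $X^*$ is in hand.
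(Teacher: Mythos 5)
Your proof is correct. The paper states this result as a citation to Carando--Lassalle and Liu and gives no proof of its own, so there is nothing internal to compare against; your argument via the adjoint identity $P_n^*(x^*)=\sum_{i=1}^n x^*(x_i)f_i$ and the observation $x^*\circ T_{n+1}=x^*-P_n^*x^*$ is the standard and natural route, and your remark that bounded completeness of $(f_i,x_i)$ comes for free --- because $\Phi(x_i)$ for $\Phi\in X^{***}$ depends only on $\Phi|_X\in X^*$ --- correctly identifies why the equivalence needs only the Schauder frame identity on $X^*$ in the backward direction.
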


It is a classic and fundamental result of James that a basis for a Banach space is both shrinking and boundedly complete,
if and only if the Banach space is reflexive.  The following theorem shows that one side of James' characterization holds for frames.
\begin{thm}\cite[Proposition 2.4]{CL}\cite[Proposition 4.9]{L}\label{T:CL2.4}
If $(x_i,f_i)_{i=1}^\infty$ is a shrinking and boundedly complete Schauder frame of a Banach space $X$, then $X$ is reflexive.
\end{thm}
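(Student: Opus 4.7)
The plan is to mimic the classical proof that a basis which is both shrinking and boundedly complete yields a reflexive space, using Theorem \ref{T:CL2.3} as the substitute for the biorthogonal duality between a basis and its coefficient functionals. Concretely, I will show that the canonical embedding $X \hookrightarrow X^{**}$ is surjective by exhibiting, for an arbitrary $x^{**} \in X^{**}$, an explicit preimage built from the boundedly complete expansion.

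First, since $(x_i,f_i)_{i=1}^\infty$ is shrinking, Theorem \ref{T:CL2.3} gives that $(f_i,x_i)_{i=1}^\infty$ is a (boundedly complete) Schauder frame for $X^*$, where each $x_i$ acts on $X^*$ via the canonical embedding. Thus for every $x^* \in X^*$,
\begin{equation*}
x^* = \sum_{i=1}^\infty x^*(x_i)\, f_i,
\end{equation*}
with convergence in the norm of $X^*$. Second, since $(x_i,f_i)_{i=1}^\infty$ is boundedly complete, for any $x^{**} \in X^{**}$ the series
\begin{equation*}
x := \sum_{i=1}^\infty x^{**}(f_i)\, x_i
\end{equation*}
converges in the norm of $X$, producing a candidate preimage $x \in X$.

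To finish, I would verify that $x^{**} = x$ under the canonical embedding, i.e., $x^{**}(x^*) = x^*(x)$ for every $x^* \in X^*$. Applying the bounded linear functional $x^{**}$ to the norm-convergent series for $x^*$ and interchanging $x^{**}$ with the sum by continuity gives
\begin{equation*}
x^{**}(x^*) = \sum_{i=1}^\infty x^*(x_i)\, x^{**}(f_i).
\end{equation*}
Likewise, applying the bounded linear functional $x^*$ to the norm-convergent series defining $x$ yields
\begin{equation*}
x^*(x) = \sum_{i=1}^\infty x^{**}(f_i)\, x^*(x_i),
\end{equation*}
and the two scalar series are identical. Hence $x^{**}$ lies in the image of $X$, so $X$ is reflexive.

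The only real obstacle is bookkeeping: one must keep straight the two convergent series (one in $X$, one in $X^*$) and confirm that the scalar identities above follow purely from continuity of $x^{**}$ and $x^*$ applied to these norm-convergent series — no further interchange of limits is needed, and no use of weak-$*$ topology is required. The shrinking hypothesis is consumed entirely in invoking Theorem \ref{T:CL2.3} to obtain the frame expansion of elements of $X^*$, while the boundedly complete hypothesis is consumed in defining $x$.
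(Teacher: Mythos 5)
Your proof is correct and complete: the paper states this theorem without proof, citing \cite{CL} and \cite{L}, and your argument is precisely the standard one used there — the shrinking hypothesis (via Theorem \ref{T:CL2.3}) gives the norm-convergent expansion $x^*=\sum x^*(x_i)f_i$ in $X^*$, the boundedly complete hypothesis produces the candidate $x=\sum x^{**}(f_i)x_i\in X$, and continuity of $x^{**}$ and $x^*$ applied to these two norm-convergent series shows the resulting scalar series coincide, so the canonical embedding is onto. No gaps.
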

It was left as an open question in \cite{CL} whether the converse of Theorem \ref{T:CL2.4} holds.  The following theorem shows that
this is false for any Banach space $X$, and is evidence of how general Schauder frames can exhibit fairly unintuitive structure.
\begin{thm}\label{T:0}
Let $X$ be a Banach space which admits a Schauder frame (i.e. has the bounded approximation property), then $X$ has
a Schauder frame which is not shrinking.
\end{thm}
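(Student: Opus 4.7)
The plan is to modify any given Schauder frame of $X$ by inserting telescoping pairs that leave every partial sum essentially unchanged yet introduce a persistent residue in the tail operators. Fix a Schauder frame $(x_i,f_i)_{i=1}^\infty$ of $X$, which exists by hypothesis, with frame constant $C$, and use Hahn--Banach to choose a unit vector $y\in X$ together with a norm-one functional $y^*\in X^*$ satisfying $y^*(y)=1$. Define a new sequence $(\tilde x_i,\tilde f_i)_{i=1}^\infty$ by
\[
(\tilde x_{3k-2},\tilde f_{3k-2})=(x_k,f_k),\quad (\tilde x_{3k-1},\tilde f_{3k-1})=(y,y^*),\quad (\tilde x_{3k},\tilde f_{3k})=(-y,y^*)
\]
for every $k\geq 1$, so that between each original term one splices in a pair whose contributions sum to zero.

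The first step is to verify that $(\tilde x_i,\tilde f_i)_{i=1}^\infty$ is a Schauder frame for $X$. Writing $\tilde S_n(x)=\sum_{i=1}^n \tilde f_i(x)\tilde x_i$, a direct computation gives $\tilde S_{3k-2}(x)=\tilde S_{3k}(x)=\sum_{j=1}^k f_j(x)x_j$ and $\tilde S_{3k-1}(x)=\sum_{j=1}^k f_j(x)x_j+y^*(x)y$, so $\tilde S_n(x)\to x$ for every $x\in X$ and $\|\tilde S_n(x)\|\leq (C+1)\|x\|$ for every $n$. An analogous accounting for block sums $\sum_{i=m}^n\tilde f_i(x)\tilde x_i$ gives a uniform bound of $(C+1)\|x\|$, so the new sequence is a Schauder frame with frame constant at most $C+1$.

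The second step is to show that the new frame is not shrinking. Along the subsequence $n=3k$ one computes
\[
\tilde T_{3k}(x)=x-\tilde S_{3k-1}(x)=x-\sum_{j=1}^k f_j(x)x_j-y^*(x)y,
\]
which converges in norm to $-y^*(x)y$ as $k\to\infty$, so $(y^*\circ\tilde T_{3k})(x)\to -y^*(x)$ for every $x\in X$. Evaluating at $x=y$ yields $(y^*\circ\tilde T_{3k})(y)\to -1$, hence $\|y^*\circ\tilde T_{3k}\|\geq 1/2$ for all sufficiently large $k$, and consequently $\|y^*\circ\tilde T_n\|\not\to 0$.

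I do not anticipate a substantive obstacle: the argument is entirely elementary. The only things to verify are that the telescoping pairs preserve convergence to $x$ and keep the block sums uniformly bounded (both are immediate from the pairwise cancellation of the $\pm y^*(x)y$ terms), and that the inserted pair $(y,y^*)$ witnesses a uniform lower bound on $\|y^*\circ\tilde T_n\|$ along a subsequence. The only potential difficulty lies in the bookkeeping of indices across the three residue classes modulo $3$, which is routine.
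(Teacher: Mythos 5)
There is a fatal error in your first step: the sequence $(\tilde x_i,\tilde f_i)_{i=1}^\infty$ you construct is \emph{not} a Schauder frame. Your own computation shows $\tilde S_{3k-1}(z)=\sum_{j=1}^k f_j(z)x_j+y^*(z)y$, and this subsequence converges to $z+y^*(z)y$, not to $z$. So for any $z$ with $y^*(z)\neq 0$ (for instance $z=y$, where $\tilde S_{3k-1}(y)\to 2y$) the partial sums $\tilde S_n(z)$ have two distinct subsequential limits and the series $\sum \tilde f_i(z)\tilde x_i$ diverges. The conclusion ``so $\tilde S_n(x)\to x$ for every $x$'' simply does not follow from the displayed formula. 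The same defect undermines the second step: $\tilde T_{3k}(x)=\sum_{i\geq 3k}\tilde f_i(x)\tilde x_i$ is not even a well-defined element of $X$ when $y^*(x)\neq 0$, and the identity $\tilde T_{3k}=\mathrm{Id}-\tilde S_{3k-1}$ presupposes the frame expansion you failed to establish. There is a structural tension here you cannot bookkeep your way around: a residue that persists in norm (which is what you want in order to defeat shrinking) is exactly a residue that destroys convergence of the partial sums at the witnessing vector.

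The paper's proof resolves this tension by making the residue vanish \emph{pointwise} but not \emph{uniformly} on the ball: instead of a fixed functional $y^*$, it inserts the pairs $(x,-y^*_n)$ and $(x,y^*_n)$ where $(y^*_n)\subset S_{X^*}$ is $w^*$-null. Then $\tilde S_{3n-1}(z)=\sum_{j\le n}f_j(z)x_j-y^*_n(z)x\to z$ because $y^*_n(z)\to 0$ for each fixed $z$, so one genuinely gets a Schauder frame; yet $\sup_{z\in B_X}|y^*_n(z)|=1$ for all $n$, which is what forces $\|x^*\circ T_{3M}\|\not\to 0$ for a functional $x^*$ with $x^*(x)=1$. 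Note also that the paper first disposes of the case where the original frame is already not shrinking, and then uses shrinkingness of $(x_i,f_i)$ to bound the tail term $|\sum_{i=M+1}^\infty x^*(x_i)f_i(z)|\le\tfrac14\|z\|$ uniformly over $z\in B_X$; some such uniform tail control is needed to turn the pointwise limit of $x^*\circ T_{3M}$ into a lower bound on its norm, and your proposal would need an analogous step even after the construction is repaired.
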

\begin{proof}
Let $(x_i,f_i)_{i=1}^\infty$ be a Schauder frame for $X$.  If
$(x_i,f_i)_{i=1}^\infty$ is not shrinking, then we are done.  Thus
we assume that $(x_i,f_i)_{i=1}^\infty$ is shrinking.  Fix $x\in X$
such that $x\neq0$, and choose $(y^*_i)_{i=1}^\infty\subset S_{X^*}$
such that $y^*_i\rightarrow_{w^*}0$. For all $n\in\N$, we define
elements
$(x_{3n-2}',f_{3n-2}'),(x_{3n-1}',f_{3n-1}'),(x_{3n}',f_{3n}')\in
X\times X^*$, in the following way:
\[\begin{array}{ccc}
\bar{x}_{3n-2}=x_n & \bar{x}_{3n-1}=x & \bar{x}_{3n}=x\\
\bar{f}_{3n-2}=f_n & \bar{f}_{3n-1}=-y^*_n & \bar{f}_{3n}=y^*_n
\end{array}\]
As $y^*_i\rightarrow_{w^*}0$, it is not difficult to see that $(\bar{x}_i,\bar{f}_i)_{i=1}^\infty$ is a frame of $X$.
  However, $(\bar{x}_i,\bar{f}_i)_{i=1}^\infty$
is not shrinking.  Indeed, let $x^*\in X^*$ such that $x^*(x)=1$.  As $(x_i,f_i)_{i=1}^\infty$ is shrinking, there exists $N_0\in\N$ such that
$|\sum_{i=M}^\infty x^*(x_i)f_i(y)|\leq\frac{1}{4}\|y\|$ for all $y\in X$ and $M\geq N_0$.  Let $M\geq N_0$ and choose $y\in B_X$ such that
 $y^*_M(y)>\frac{3}{4}$.  We now have the following estimate,
 $$\|x^*\circ T_{3M}\|\geq x^*\circ T_{3M}(y)= x^*(x)y^*_M(y)+\sum_{i=M+1}^\infty f_i(y)x^*(x_i)>\frac{3}{4}-\frac{1}{4}=\frac{1}{2}
 $$
Thus we have that $\|x^*\circ T_{N}\|\not\rightarrow0$, and hence $(\bar{x}_i,\bar{f}_i)_{i=1}^\infty$ is not shrinking.

\end{proof}

As a Schauder frame must be shrinking in order to have a shrinking
associated basis, Theorem \ref{T:0}
 implies that not every Schauder frame for a reflexive Banach space has a reflexive associated space.

\begin{defn}
If $(x_i,f_i)_{i=1}^\infty$ is a Schauder frame for a Banach space
$X$, and $(z_i)_{i=1}^\infty$ is an associated basis then
$(x_i,f_i)_{i=1}^\infty$ is called strongly shrinking relative to
$(z_i)_{i=1}^\infty$ if
$$\|x^*\circ S_n\|\rightarrow0\qquad\textrm{for all }x^*\in X^*
$$
where $S_n:Z\rightarrow X$ is defined by $S_n(z)=\sum_{i=n}^\infty
z^*_i(z)x_i$.
\end{defn}

It is clear that if a Schauder frame is strongly shrinking relative
to some associated basis, then the Schauder frame must be shrinking.
Also, if a Schauder frame has a shrinking associated basis, then the
frame is strongly shrinking relative to the basis. In \cite{CL},
examples of shrinking Schauder frames are given which are not
strongly shrinking relative to some given associated spaces.
However, we will prove later that for any given shrinking Schauder
frame, there exists an associated basis such that the frame is
strongly shrinking relative to the basis.  Before proving this, we
state the following theorem which illustrates why the concept of
strongly shrinking will be important to us and allows us to use
frames in duality arguments.

\begin{thm}\cite[Lemma 1.7, Theorem 1.8]{CL}\label{T:CL1.8}
If $(x_i,f_i)_{i=1}^\infty$ is a Schauder frame for a Banach space
$X$, and $(z_i)_{i=1}^\infty$ is an associated basis for
$(x_i,f_i)_{i=1}^\infty$ then $(z^*_i)_{i=1}^\infty$ is an
associated basis for $(f_i,x_i)_{i=1}^\infty$, if and only if
$(x_i,f_i)_{i=1}^\infty$ is strongly shrinking relative to
$(z_i)_{i=1}^\infty$.

Furthermore, given operators $T:X\rightarrow Z$ and $S:Z\rightarrow X$ defined by
$T(x)=\sum f_i(x)z_i$ for all $x\in X$ and $S(z)=\sum z^*_i(z) x_i$ for all $z\in Z$, if $(x_i,f_i)_{i=1}^\infty$ is strongly shrinking relative to
$(z_i)_{i=1}^\infty$, then $S^*:X^*\rightarrow [z_i^*]$ and $T^*:[z_i^*]\rightarrow X^*$ are given by
$S^*(x^*)=\sum x^*(x_i)z^*_i$ for all $x^*\in X^*$ and $T^*(z^*)=\sum z^*(z_i)f_i$ for all $z^*\in [z_i^*]$.
\end{thm}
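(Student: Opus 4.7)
The plan is to connect the formal expansions $S^{*}(x^{*}) = \sum_{i} x^{*}(x_{i}) z_{i}^{*}$ and $T^{*}(z^{*}) = \sum_{i} z^{*}(z_{i}) f_{i}$, which always hold pointwise from the boundedness of $S : Z \to X$ and $T : X \to Z$, to honest norm-convergent expansions in $[z_{i}^{*}]$ and $X^{*}$ respectively. The key identity will be that for every $x^{*} \in X^{*}$ and every $z \in Z$ we have $S^{*}(x^{*})(z) = x^{*}(Sz) = \sum_{i=1}^{\infty} x^{*}(x_{i}) z_{i}^{*}(z)$, so if $\sigma_{n} := \sum_{i=1}^{n} x^{*}(x_{i}) z_{i}^{*} \in Z^{*}$, then $S^{*}(x^{*}) - \sigma_{n}$ agrees on $Z$ with $z \mapsto x^{*}(S_{n+1} z)$, yielding
\[
\|S^{*}(x^{*}) - \sigma_{n}\|_{Z^{*}} \;=\; \|x^{*} \circ S_{n+1}\|.
\]
This single identity drives both directions of the equivalence.

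For the $(\Leftarrow)$ direction, assume $(x_{i},f_{i})$ is strongly shrinking relative to $(z_{i})$. Then $\|x^{*} \circ S_{n}\| \to 0$, so by the identity above $\sigma_{n} \to S^{*}(x^{*})$ in $Z^{*}$-norm, putting $S^{*}(x^{*})$ into $[z_{i}^{*}]$. Hence the candidate map $T' : X^{*} \to [z_{i}^{*}]$ given by $T'(x^{*}) = \sum x^{*}(x_{i}) z_{i}^{*}$ is well-defined, and its boundedness follows from the factorization $\sigma_{n} = x^{*} \circ S \circ P_{n}$, which gives $\|\sigma_{n}\|_{Z^{*}} \leq K\|S\|\|x^{*}\|$ with $K$ the basis constant of $(z_{i})$. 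For the dual map $S' : [z_{i}^{*}] \to X^{*}$, $S'(z^{*}) = \sum z^{*}(z_{i}) f_{i}$, the direct computation $T^{*}(z_{i}^{*}) = f_{i}$ together with continuity of $T^{*}$ applied to the norm-convergent expansion $z^{*} = \sum c_{i} z_{i}^{*}$ (with $c_{i} = z^{*}(z_{i})$) produces the desired series in $X^{*}$. Because strong shrinking implies shrinking, Theorem \ref{T:CL2.3} guarantees that $(f_{i},x_{i})$ is itself a Schauder frame for $X^{*}$, so $(z_{i}^{*})$ is indeed an associated basis.

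For $(\Rightarrow)$, if $(z_{i}^{*})$ is an associated basis for $(f_{i},x_{i})$, then the defining series $T'(x^{*}) = \sum x^{*}(x_{i}) z_{i}^{*}$ converges in $Z^{*}$-norm. Comparing pointwise values with $S^{*}(x^{*})$ forces the limit to equal $S^{*}(x^{*})$; hence $\|S^{*}(x^{*}) - \sigma_{n}\|_{Z^{*}} \to 0$, and the key identity delivers $\|x^{*} \circ S_{n}\| \to 0$, which is exactly strong shrinking. The ``furthermore'' clause is then a clean restatement of the identifications already made: under strong shrinking, $S^{*}$ maps $X^{*}$ into $[z_{i}^{*}]$ with $S^{*}(x^{*}) = \sum x^{*}(x_{i}) z_{i}^{*}$ in norm, while $T^{*}$ restricted to $[z_{i}^{*}]$ satisfies $T^{*}(z^{*}) = \sum z^{*}(z_{i}) f_{i}$ in norm. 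The main obstacle is locating and proving the displayed identity; once it is in place, the rest reduces to routine continuity arguments and the basis structure of $(z_{i})$.
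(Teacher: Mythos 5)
Your proof is correct. Note that the paper itself gives no argument for this statement --- it is quoted verbatim from Carando--Lassalle \cite{CL} --- so there is no in-paper proof to compare against; your identity $\|S^*(x^*)-\sum_{i=1}^n x^*(x_i)z_i^*\|_{Z^*}=\|x^*\circ S_{n+1}\|$, obtained by evaluating both sides on $z\in Z$, is exactly the right engine, and the remaining steps (boundedness via $\sigma_n=x^*\circ S\circ P_n$, the computation $T^*(z_i^*)=f_i$ extended by continuity along $z^*=\sum z^*(z_i)z_i^*$, and the appeal to Theorem~\ref{T:CL2.3} to confirm $(f_i,x_i)$ is a frame for $X^*$) are all sound and match the standard duality argument in \cite{CL}.
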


Applying Theorem \ref{T:CL1.8} to reflexive Banach spaces gives the following corollary.

\begin{cor}\label{C:1.1}
If $(x_i,f_i)_{i=1}^\infty$ is a shrinking frame for a reflexive Banach space $X$ and
$(z_i)_{i=1}^\infty$ is an associated basis such that $(x_i,f_i)_{i=1}^\infty$ is strongly shrinking relative to $(z_i)_{i=1}^\infty$ then
$(f_i,x_i)_{i=1}^\infty$ is strongly shrinking relative to $(z_i^*)_{i=1}^\infty$.
\end{cor}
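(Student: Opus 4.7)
The plan is to unpack what strong shrinking means for the dual frame $(f_i,x_i)$ and then reduce the statement to a tail estimate for the basis $(z_i)$ inside $Z$. Let $T:X\to Z$ and $S:Z\to X$ be the operators associated to $(z_i)$ as in Definition \ref{D:0}. Because $(x_i,f_i)$ is strongly shrinking relative to $(z_i)$, Theorem \ref{T:CL1.8} supplies $(z_i^*)$ as an associated basis for the Schauder frame $(f_i,x_i)$ of $X^*$, with reconstruction operator $\widetilde S:=T^*\colon[z_i^*]\to X^*$ acting by $z^*\mapsto\sum_i z^*(z_i)f_i$. Identifying the biorthogonal functionals of $(z_i^*)$ inside $[z_i^*]^*$ with the canonical images of the $z_i$, the tail operator of the dual frame is
\[
\widetilde S_n(z^*)=\sum_{i\geq n}z^*(z_i)f_i,\qquad z^*\in[z_i^*],
\]
and what must be shown is that $\|w\circ\widetilde S_n\|_{[z_i^*]^*}\to 0$ for every $w\in(X^*)^*=X^{**}$.

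The key step is to use reflexivity of $X$ to write $w=J_X(x)$ for some $x\in X$. A direct computation gives, for each $z^*\in[z_i^*]$,
\[
w\bigl(\widetilde S_n(z^*)\bigr)=\sum_{i\geq n}f_i(x)z^*(z_i)=z^*\Bigl(\sum_{i\geq n}f_i(x)z_i\Bigr)=z^*\bigl(Q_nT(x)\bigr),
\]
where $Q_n:Z\to Z$ is the tail projection $Q_n(z)=\sum_{i\geq n}z_i^*(z)z_i$; the middle equality is continuity of $z^*$ applied to the norm-convergent series $T(x)=\sum_i f_i(x)z_i$ in $Z$. Since $[z_i^*]$ carries the subspace norm from $Z^*$, taking the supremum over $\|z^*\|\leq 1$ in $[z_i^*]$ yields
\[
\|w\circ\widetilde S_n\|_{[z_i^*]^*}\leq\|Q_nT(x)\|_Z,
\]
and the right-hand side tends to $0$ because $(z_i)$ is a basis for $Z$, so the basis tails of the fixed vector $T(x)\in Z$ converge to $0$ in norm. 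Letting $w$ range over $X^{**}$ gives the desired conclusion.

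The only real obstacle is bookkeeping: one has to identify $\widetilde S_n$ with (the restriction to $[z_i^*]$ of) $T^*\circ Q_n^*$, which depends on Theorem \ref{T:CL1.8} together with the observation that the biorthogonal sequence of $(z_i^*)$ inside $[z_i^*]^*$ is the sequence $(z_i)$ itself. Once that identification is in place, reflexivity pulls $w$ back to an element of $X$, the displayed identity converts the norm of $w\circ\widetilde S_n$ into the $Z$-norm of $Q_nT(x)$, and the Schauder basis property of $(z_i)$ finishes the argument.
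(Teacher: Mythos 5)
Your proof is correct and matches the paper's intent: the paper obtains this corollary by simply ``applying Theorem \ref{T:CL1.8} to reflexive Banach spaces,'' and your argument supplies exactly the verification left implicit there --- using Theorem \ref{T:CL1.8} to identify $(z_i^*)$ as an associated basis for $(f_i,x_i)$ with tail operators $z^*\mapsto\sum_{i\ge n}z^*(z_i)f_i$, pulling $w\in X^{**}$ back to some $x\in X$ by reflexivity, and bounding $\|w\circ\widetilde{S}_n\|\le\|Q_nT(x)\|_Z\to 0$ via the basis expansion of $T(x)$ in $Z$. No gaps.
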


Before proceeding further, we need some stability lemmas. Note that
if $(z_i)_{i=1}^\infty$ is a basis for a Banach space $Z$, with
projection operators $P_{(n,k)}:Z\rightarrow Z$ given by
$P_{(n,k)}(\sum a_i z_i)=\sum_{i\in(n,k)}a_i z_i$, then
$P_{(1,k)}\circ P_{(n,\infty)}=0$ and $P_{(n,\infty)}\circ
P_{(1,k)}=0$ for all $k<n$. The analogous property fails when
working with frames.  However, the following lemmas will essentially
allow us to obtain this property within some given $\vp>0$ if $n$ is
chosen sufficiently larger than $k$.

\begin{lem} \label{L:0}
Let $(x_i,f_i)_{i=1}^\infty$ be a Schauder frame for a Banach space $X$.  Then
 for all $\vp>0$ and $k\in\N$, there exists $N\in\N$ such that $N>k$
 and
 $$\sup_{n\geq m\geq N>k\geq n_0\geq m_0}\|\sum_{i=m}^n f_i(\sum_{j=m_0}^{n_0}f_j(x)x_j)x_i\|\leq\vp\|x\|\qquad\textrm{ for all }x\in X.$$
\end{lem}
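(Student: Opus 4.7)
The plan is to reduce to a finite computation via linearity and the finite range of $m_0,n_0$. Writing $P_{[m,n]}(y):=\sum_{i=m}^n f_i(y)x_i$, the desired expression is $P_{[m,n]}(P_{[m_0,n_0]}(x))$. By linearity of $P_{[m_0,n_0]}$ and of $P_{[m,n]}$,
\[
P_{[m,n]}\!\left(\sum_{j=m_0}^{n_0}f_j(x)x_j\right)=\sum_{j=m_0}^{n_0} f_j(x)\,\sum_{i=m}^{n} f_i(x_j)\,x_i,
\]
so the problem decouples into controlling, for each fixed $j\in\{1,\dots,k\}$, the tail sums $\sum_{i=m}^{n} f_i(x_j)x_i$ uniformly in $n\geq m\geq N$.

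The key observation is the following: for each single $j$, the series $x_j=\sum_{i=1}^{\infty} f_i(x_j)x_i$ converges in norm (this is just the defining property of the Schauder frame applied to the vector $x=x_j$). Hence the partial sums form a Cauchy sequence, which gives
\[
\sup_{n\geq m\geq N}\Bigl\|\sum_{i=m}^{n} f_i(x_j)\,x_i\Bigr\|\;\longrightarrow\;0\qquad\text{as }N\to\infty.
\]
Since there are only $k$ such indices $j\in\{1,\dots,k\}$, I would choose, given $\vp>0$, integers $N_1,\dots,N_k$ so that each tail is at most $\vp/(kM)$, where $M:=\max_{1\le j\le k}\|f_j\|$, and then set $N:=\max(k+1,N_1,\dots,N_k)$ in order to simultaneously ensure $N>k$ and smallness of all $k$ tails.

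Putting this together, for any $n\geq m\geq N$, any $k\geq n_0\geq m_0$, and any $x\in X$,
\[
\Bigl\|\sum_{i=m}^{n} f_i\!\Bigl(\sum_{j=m_0}^{n_0} f_j(x)x_j\Bigr)x_i\Bigr\|
\;\leq\;\sum_{j=m_0}^{n_0}|f_j(x)|\,\Bigl\|\sum_{i=m}^{n} f_i(x_j)x_i\Bigr\|
\;\leq\;\sum_{j=1}^{k}\|f_j\|\,\|x\|\cdot\frac{\vp}{kM}\;\leq\;\vp\|x\|,
\]
which is the desired estimate. There is no serious obstacle here; the only care required is that $N$ must dominate the $k$ separate tail-thresholds \emph{and} exceed $k$, both of which are handled by the single maximum. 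Note that the frame constant $C$ is not needed for the conclusion, though it would appear if one instead bounded $P_{[m,n]}$ as an operator before using the finite-rank structure of $P_{[m_0,n_0]}$.
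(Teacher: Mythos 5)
Your proposal is correct and follows essentially the same route as the paper: decompose by linearity into the finitely many terms $f_j(x)\sum_{i=m}^n f_i(x_j)x_i$ for $j\le k$, use norm convergence of $\sum_i f_i(x_j)x_i$ to make each tail small, and take a maximum of the thresholds. The only cosmetic difference is that the paper chooses a separate tolerance $\vp/(k\|f_\ell\|)$ for each nonzero $f_\ell$ while you use the uniform $\vp/(kM)$ (which silently assumes $M\neq 0$, a trivial case anyway).
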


\begin{proof}
As $(x_i,f_i)_{i=1}^\infty$ is a Schauder frame, for each $1\leq \ell\leq k$ with $f_\ell\neq0$, there exists
 $N_\ell>k$ such
 that $\sup_{n\geq m\geq N_\ell}\|\sum_{i=m}^n f_i(x_\ell)x_i\|<\vp/(k\|f_\ell\|)$.  Let $N=\max_{1\leq\ell\leq k}N_\ell$.
 We now obtain the following estimate for $n\geq m\geq N>k\geq n_0\geq m_0$ and $x\in X$.
\begin{align*}
\|\sum_{i=m}^n \sum_{j=m_0}^{n_0}f_j(x)f_i(x_j)x_i\|&\leq k\max_{1\leq \ell\leq k}\|\sum_{i=m}^n f_\ell(x)f_i(x_\ell)x_i\|\qquad\quad\textrm{as }k\geq n_0\\
&\leq k\max_{1\leq \ell \leq k}  \|\sum_{i=m}^n f_i(x_\ell) x_i\|\|f_\ell\|\|x\|\leq\vp\|x\|\qquad\textrm{as }n\geq m\geq N_\ell.\\
\end{align*}
\end{proof}

In terms of operators, Lemma \ref{L:0} can be stated as for all
$k\in \N$, if $(x_i,f_i)_{i=1}^\infty$ is a Schauder frame then
$\lim_{N\rightarrow\infty}\|T_N\circ (Id_X-T_k)\|=0$, where
$T_n:X\rightarrow X$ is given by $T_n(x)=\sum_{i=n}^\infty
f_i(x)x_i$ for all $n\in\N$.
  We now prove that for all $k\in \N$, if $(x_i,f_i)_{i=1}^\infty$ is a shrinking Schauder frame then
$\lim_{N\rightarrow\infty}\| (Id_X-T_k)\circ T_N\|= 0$. The frame
given in the proof of Theorem \ref{T:0} shows that we cannot drop
the condition of shrinking.

\begin{lem}\label{L:1}
Let $(x_i,f_i)_{i=1}^\infty$ be a shrinking Schauder frame for a Banach space $X$.  Then
 for all $\vp>0$ and $k\in\N$, there exists $N\in\N$ such that $N>k$
 and
 $$\sup_{n\geq m\geq N>k\geq n_0\geq m_0}\|\sum_{i=m_0}^{n_0} f_i(\sum_{j=m}^n f_j(x) x_j)x_i\|\leq\vp\|x\|\qquad\textrm{ for all }x\in X.$$
\end{lem}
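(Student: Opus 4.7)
The plan is to closely mirror the proof of Lemma \ref{L:0}, with the shrinking hypothesis replacing the convergence of the series $x_\ell=\sum_i f_i(x_\ell)x_i$.  In Lemma \ref{L:0} we fixed $\ell\leq k$ and used that the tail $\sum_{i\geq N} f_i(x_\ell)x_i$ is small; here the roles of ``head'' and ``tail'' are reversed, so what we need is that for each fixed $i\leq k$ the functional $f_i$ becomes small on the tail operators $T_N$.  This is exactly the shrinking hypothesis applied to $x^*=f_i\in X^*$.

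Concretely, set $M=1+\max_{1\leq j\leq k}\|x_j\|$ (the case in which $x_j=0$ for every $j\leq k$ is trivial).  For each $1\leq i\leq k$, applying shrinking to $x^*=f_i$ yields some $N_i>k$ with $\|f_i\circ T_n\|\leq \vp/(2kM)$ for all $n\geq N_i$.  Put $N=\max_{1\leq i\leq k}N_i$.

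Now fix $n\geq m\geq N>k\geq n_0\geq m_0$ and $x\in X$, and let $y=\sum_{j=m}^n f_j(x)x_j$.  Rewrite this inner block as a difference of two tails,
\[
y=T_m(x)-T_{n+1}(x),
\]
so that for each $m_0\leq i\leq n_0\leq k$,
\[
|f_i(y)|\leq \bigl(\|f_i\circ T_m\|+\|f_i\circ T_{n+1}\|\bigr)\|x\|\leq \frac{\vp}{kM}\|x\|.
\]
Combining with the trivial bound $\|x_i\|\leq M$ and summing over the at most $k$ indices $i\in[m_0,n_0]$ gives
\[
\Bigl\|\sum_{i=m_0}^{n_0} f_i(y)x_i\Bigr\|\leq \sum_{i=m_0}^{n_0}|f_i(y)|\,\|x_i\|\leq k\cdot\frac{\vp}{kM}\cdot M\cdot\|x\|=\vp\|x\|,
\]
which is the desired estimate.

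I do not anticipate a real obstacle.  The only subtlety is that the inner sum is indexed by arbitrary pairs $m\leq n$ with $m\geq N$ rather than by a single tail $T_N(x)$, but rewriting it as a difference of two shrinking tails handles this uniformly (and the factor of two is absorbed in the choice $\vp/(2kM)$).  The frame constructed in the proof of Theorem \ref{T:0} illustrates why the shrinking hypothesis is genuinely needed: without it, a functional $f_i$ can retain norm bounded away from $0$ on arbitrarily late tails $T_N$, and the entire argument collapses.
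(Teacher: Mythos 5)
Your proof is correct and follows essentially the same route as the paper's: both arguments apply the shrinking hypothesis to the finitely many functionals $f_1,\dots,f_k$ (the paper phrases this via Theorem \ref{T:CL2.3} as norm-smallness of the blocks $\sum_{j=m}^n f_\ell(x_j)f_j$, which is the same quantity as your $\|f_\ell\circ T_m\|+\|f_\ell\circ T_{n+1}\|$ bound) and then sum the at most $k$ outer terms by the triangle inequality. The only differences are cosmetic: your explicit difference-of-tails rewriting and the constant $M=1+\max\|x_j\|$ versus the paper's case split on $x_\ell\neq 0$.
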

\begin{proof}

By Theorem \ref{T:CL2.3},
 $(f_i,x_i)_{i=1}^\infty$ is a Schauder frame for $X^*$.  Thus for each $1\leq \ell\leq k$ with $x_\ell\neq0$, there exists
 $N_\ell>k$ such
 that $\sup_{n\geq m\geq N_\ell}\|\sum_{j=m}^n f_\ell(x_j)f_j\|<\vp/(k\|x_\ell\|)$.  Let $N=\max_{1\leq\ell\leq k}N_\ell$.
 We now obtain the following estimate for $n\geq m\geq N>k\geq n_0\geq m_0$ and $x\in X$.
\begin{align*}
 \|\sum_{i=m_0}^{n_0} f_i(\sum_{j=m}^n f_j(x) x_j)x_i\|&\leq k\sup_{1\leq \ell \leq k} \| f_\ell(\sum_{j=m}^n f_j(x) x_j)x_\ell\|\qquad\textrm{ as }k\geq n_0\\
&= k\sup_{1\leq \ell \leq k}  \left|\sum_{j=m}^n f_j(x) f_\ell(x_j)\right|\|x_\ell\|\\
&\leq k\sup_{1\leq \ell \leq k}  \|\sum_{j=m}^n f_\ell(x_j) f_j\| \|x\|\|x_\ell\|\leq\vp\|x\|\qquad\textrm{as }n\geq m\geq N_\ell.\\
\end{align*}

\end{proof}

Our method for proving that every shrinking frame has a shrinking associated basis is to first prove that every
shrinking frame is strongly shrinking with respect to some associated basis, and then renorm that associated
basis to make it shrinking.  The following theorem is thus our first major step.

\begin{thm}\label{T:1.2}
Let $(x_i,f_i)_{i=1}^\infty$ be a shrinking Schauder frame for a Banach space $X$.  Then $(x_i,f_i)_{i=1}^\infty$ has an
associated basis $(z_i)_{i=1}^\infty$ such that $(x_i,f_i)_{i=1}^\infty$ is strongly shrinking relative to $(z_i)_{i=1}^\infty$.
\end{thm}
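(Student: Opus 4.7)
The plan is to modify the minimal associated space of \cite{CDOSZ} by enlarging its norm, using a blocking structure coming from Lemmas \ref{L:0} and \ref{L:1}, so that the tail operators $S_n$ become small on every functional. The key observation is that, by Theorem \ref{T:CL1.8}, strong shrinkingness of $(x_i,f_i)$ relative to the constructed basis is equivalent to the series $\sum x^*(x_i) z_i^*$ converging in $Z^*$ for every $x^* \in X^*$; this is a convergence statement we force by choosing a norm just large enough to rigidify the tails while staying compatible with the boundedness of $T$ and $S$.

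First, I would inductively select a fast-growing sequence $0 = N_0 < N_1 < N_2 < \cdots$. Having chosen $N_0,\ldots,N_k$, I apply Lemma \ref{L:0} with $k$ replaced by $N_k$ and $\vp$ replaced by $2^{-k-1}$, and simultaneously apply Lemma \ref{L:1} (which uses the shrinking assumption on the frame) with the same parameters, to find a single $N_{k+1}>N_k$ satisfying both conclusions. With these choices, the block projections $R_k(x):=\sum_{i\in I_k}f_i(x)x_i$ defined on $I_k:=(N_{k-1},N_k]$ exhibit geometric decay of all cross-block interactions in both directions, giving $X$ an almost-FDD structure indexed by $(I_k)$.

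Second, I would construct the associated space $Z$ as the completion of $c_{00}$ under a norm of the form
\[
\Big\|\sum a_i e_i\Big\|_Z := \sup_{m\leq n}\Big\|\sum_{i=m}^n a_i x_i\Big\|_X + \sup_{M\leq N}\Big\|\sum_{k=M}^N R_k\Big(\sum_i a_i x_i\Big)\Big\|_X,
\]
or a similar variant built from the block data. The role of the auxiliary second term is to force each $z\in B_Z$ to approximate $T(Sz)$ well on tails, a property that the raw minimal norm does not enforce. I would then verify using Step 1 that $(e_i)$ is a Schauder basis of $Z$, that $\|T(x)\|_Z\lesssim\|x\|$, and that $\|S(z)\|_X\leq\|z\|_Z$, so that $Z$ is indeed an associated space for $(x_i,f_i)$.

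Finally, I would establish strong shrinkingness. For $x^*\in X^*$ and $z=\sum a_i e_i\in B_Z$, the tail $S_n z=\sum_{i\geq n}a_i x_i$ splits as $T_n(Sz)$ plus cross-block error terms whose $X$-norms decay geometrically with $n$ by Step 1. The shrinking hypothesis yields $|x^*(T_n(Sz))|\leq\|x^*\circ T_n\|\cdot\|Sz\|\to 0$ uniformly over $B_Z$, and the correction terms contribute at most $O(2^{-n})\|x^*\|$. Taking the supremum over $z\in B_Z$ gives $\|x^*\circ S_n\|_{Z^*}\to 0$. The principal obstacle is the second step: designing the auxiliary norm so that the boundedness of $T$ and the Schauder basis property of $(e_i)$ are preserved, while the tails of $Z$ are sufficiently rigid for the cross-block errors to be controllable. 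This is precisely where the delicate interplay between the minimal-norm term and the block-sum term, driven by Lemmas \ref{L:0} and \ref{L:1}, plays its essential role.
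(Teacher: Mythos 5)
Your overall strategy (enlarge the minimal norm using the blockings supplied by Lemmas \ref{L:0} and \ref{L:1}, then run a dual argument) is the right one, but the specific norm you propose does not do the job, and the final verification contains a false identity.

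First, the norm. With $y=\sum_i a_i x_i$ (a finite sum for $(a_i)\in\coo$), your auxiliary term is $\sup_{M\leq N}\|\sum_{i=N_{M-1}+1}^{N_N}f_i(y)x_i\|$, which is at most $C\|y\|$ where $C$ is the frame constant, hence at most $C$ times your first term. So your norm is equivalent to the minimal associated norm and ``rigidifies'' nothing; the paper explicitly notes that a shrinking frame need not be strongly shrinking relative to a given associated basis, so an equivalent renorming of the minimal space cannot suffice in general. The missing idea is an \emph{unbounded weight}: the paper's norm (\ref{E:2.8.2}) includes the term $\max_{k\in\N;\,n\geq m\geq N_k}2^{k}\|\sum_{i=1}^{k}f_i(\sum_{j=m}^{n}a_jx_j)x_i\|$, where $(N_k)$ is chosen by Lemma \ref{L:1} so that this bracketed quantity is at most $2^{-2k}\|x\|$ when $(a_j)=(f_j(x))$. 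The weight $2^k$ is exactly calibrated so that $T$ remains bounded (since $2^k\cdot 2^{-2k}=2^{-k}$) while every $z\in B_Z$ satisfies $\|\sum_{i=1}^{k}f_i(\sum_{j=m}^{n}a_jx_j)x_i\|\leq 2^{-k}$ for $n\geq m\geq N_k$ --- a quantity that tends to $0$ in $k$, which is what the strong-shrinking estimate consumes. Without weights tending to infinity you only get that this quantity is bounded on $B_Z$, which is useless.

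Second, the verification. The claim that $S_Nz=\sum_{i\geq N}a_ix_i$ equals $T_N(Sz)$ up to geometrically small cross-block errors is false: the coordinates $a_i$ of a generic $z\in Z$ bear no relation to the frame coefficients $f_i(Sz)$ of $Sz=\sum_j a_jx_j$. Indeed $S$ typically has a large kernel (containing $c_0$ for the minimal space, by \cite{LZ}), and for $z\in\ker S$ one has $T_N(Sz)=0$ while $S_Nz$ need not be small. Lemmas \ref{L:0} and \ref{L:1} control the compositions $T_N\circ(Id_X-T_k)$ and $(Id_X-T_k)\circ T_N$ of frame partial-sum operators; they say nothing about the discrepancy between arbitrary coordinates and frame coefficients, so the proposed error analysis cannot be carried out. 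The correct argument is dual: since the frame is shrinking, $(f_i,x_i)$ is a frame for $X^*$ by Theorem \ref{T:CL2.3}, so $x^*(\sum_{i\geq N}a_ix_i)=\sum_{j}x^*(x_j)f_j(\sum_{i\geq N}a_ix_i)$; splitting this sum at $j=k$, the tail is bounded by $\|\sum_{j>k}x^*(x_j)f_j\|\,\|z\|_Z$, which is small by shrinkingness, and the head $\|\sum_{j=1}^{k}f_j(\sum_{i\geq N}a_ix_i)x_j\|$ is bounded by $2^{-k}\|z\|_Z$ for $N\geq N_k$ precisely because of the weighted term in the norm. Both places where your argument breaks are repaired by the same device, the weight $2^k$.
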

\begin{proof}
We repeatedly apply Lemma \ref{L:1} to obtain a subsequence $(N_k)_{k=1}^\infty$ of $\N$ such that for all $k\in\N$,
\begin{equation}\label{E:2.8.1}
\sup_{n\geq m\geq N_k}\|\sum_{i=1}^k f_i(\sum_{j=m}^n f_j(x) x_j)x_i\|\leq  2^{-2k}\|x\|\qquad\textrm{ for all }x\in X.
\end{equation}

We assume without loss of generality that $x_i\neq0$ for all $i\in\N$.
We denote the unit vector basis of $\coo$ by $(z_i)_{i=1}^\infty$, and
 define the following norm,
$\|\cdot\|_Z$ for all $(a_i)\in\coo$.
\begin{equation}\label{E:2.8.2}
\|\sum a_i z_i\|_Z=\max_{n\geq m}\|\sum_{i=m}^n a_i x_i\|\vee
\max_{k\in\N;n\geq m\geq N_k}2^k \|\sum_{i=1}^{k}f_i(\sum_{j=m}^n
a_j x_j)x_i\|.
\end{equation}
It follows easily that $(z_i)_{i=1}^\infty$ is a bimonotone basic sequence, and thus $(z_i)_{i=1}^\infty$ is a bimonotone
basis for the completion of $\coo$ under $\|\cdot\|_Z$, which we denote by $Z$.
We first prove that $(z_i)_{i=1}^\infty$ is an associated basis for $(x_i,f_i)_{i=1}^\infty$.
Let $C$ be the frame constant of $(x_i,f_i)_{i=1}^\infty$.  That is, $max_{n\geq m}\|\sum_{i=m}^n f_i(x)x_i\|\leq C\|x\|$ for all $x\in X$.
By (\ref{E:2.8.1}) and (\ref{E:2.8.2}), the operator $T:X\rightarrow Z$,
defined by $T(x)=\sum f_i(x)z_i$ for all $x\in X$, is bounded and $\|T\|\leq C$.
We have that $\|\sum_{i=m}^n a_i z_i\|_Z\geq \|\sum_{i=m}^n a_i x_i\|$, and hence the operator $S:Z\rightarrow X$ defined by
 $S(z)=\sum z^*_i(z)x_i$ is bounded and $\|S\|=1$.  Thus we have that $(z_i)_{i=1}^\infty$ is an associated basis for $(x_i,f_i)_{i=1}^\infty$.

 We now prove that $(x_i,f_i)_{i=1}^\infty$ is
strongly shrinking relative to $(z_i)_{i=1}^\infty$.  Let $\vp>0$ and $x^*\in B_{X^*}$.  As $(x_i,f_i)_{i=1}^\infty$ is shrinking, we may
choose $k\in\N$ such that $2^{-k}<\vp/2$ and $\|\sum_{j=k+1}^{\infty}x^*(x_j)f_j\|<\vp/2$. We obtain the following estimate
for any $N\geq N_k$ and $z=\sum a_i z_i\in Z$.
\begin{align*}
x^*(\sum_{i=N}^\infty a_i x_i)&=\sum_{j=1}^{\infty}x^*(x_j)f_j(\sum_{i=N}^\infty a_i x_i)\qquad\textrm{as }(f_i,x_i)_{i=1}^\infty\textrm{ is a frame for }X^*\\
&=\sum_{j=1}^{k}x^*(x_j)f_j(\sum_{i=N}^\infty a_i x_i)+\sum_{j=k+1}^{\infty}x^*(x_j)f_j(\sum_{i=N}^\infty a_i x_i)\\
&\leq \|x^*\|\|\sum_{j=1}^{k}f_j(\sum_{i=N}^\infty a_i x_i)x_j\|+\|\sum_{j=k+1}^{\infty}x^*(x_j)f_j\|\|z\|_Z\qquad\textrm{  as }\|\sum_{i=N}^\infty a_i x_i\|\leq\|z\|_Z\\
&\leq \|x^*\|2^{-k}\|z\|_Z+\|\sum_{j=k+1}^{\infty}x^*(x_j)f_j\|\|z\|_Z\qquad\textrm{ by (\ref{E:2.8.2}) as }N\geq N_k\\
&<\vp/2\|z\|_Z+\vp/2\|z\|_Z
\end{align*}
We thus have that for all $x^*\in X^*$ and $\vp>0$, that there
exists $M\in\N$ such that $|x^*(\sum_{i=N}^\infty z^*_i(z)x_i)|<\vp$
for all $N\geq M$ and $z^*\in B_{Z^*}$.  Hence,
 $(x_i,f_i)_{i=1}^\infty$ is
strongly shrinking relative to $(z_i)_{i=1}^\infty$.

\end{proof}

The following lemmas incorporate an associated basis into the tail and initial segment estimates of Lemma \ref{L:0} and Lemma \ref{L:1}.

\begin{lem}\label{L:2.1}
Let $X$ be a Banach space with a shrinking Schauder frame $(x_i,f_i)_{i=1}^\infty$.
  Let $Z$ be a Banach space with basis $(z_i)_{i=1}^\infty$ such that $(x_i,f_i)_{i=1}^\infty$ is strongly shrinking relative to $(z_i)_{i=1}^\infty$. Then for all $k\in\N$ and $\vp>0$, there exists $N\in\N$ such that
$$\sup_{k\geq n\geq m}\|\sum_{i=N}^{\infty}(\sum_{j=m}^{n}x^*(x_j)f_j(x_i))z^*_i\|<\vp\|x^*\|\qquad\textrm{for all }x^*\in X^*$$
\end{lem}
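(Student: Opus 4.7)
The strategy is to recognize the expression inside the norm as a tail of a basis expansion in $[z_i^*]$, and then exploit the fact that, for fixed $k$, this expression ranges over the image under a bounded linear map of a bounded subset of a finite-dimensional space. First, since $(x_i,f_i)_{i=1}^\infty$ is strongly shrinking relative to $(z_i)_{i=1}^\infty$, Theorem \ref{T:CL1.8} gives that $(z_i^*)_{i=1}^\infty$ is an associated basis for the frame $(f_i,x_i)_{i=1}^\infty$ of $X^*$, and that the reconstruction operator $S^*:X^*\to [z_i^*]$ satisfies $S^*(y^*)=\sum_i y^*(x_i)z_i^*$. Setting $y^*_{m,n}:=\sum_{j=m}^n x^*(x_j)f_j\in X^*$, a direct expansion identifies
$$\sum_{i=N}^{\infty}\Bigl(\sum_{j=m}^{n}x^*(x_j)f_j(x_i)\Bigr)z^*_i \;=\; Q_N S^*(y^*_{m,n}),$$
where $Q_N$ is the tail projection $Q_N(\sum c_i z_i^*) = \sum_{i\geq N} c_i z_i^*$ on $[z_i^*]$.

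Next I would observe that for every $k\geq n\geq m$ and every $x^*\in B_{X^*}$, the functional $y^*_{m,n}$ lies in the finite-dimensional subspace $\spa(f_1,\dots,f_k)\subset X^*$ and is uniformly norm-bounded: if $C$ denotes the frame constant of $(x_i,f_i)_{i=1}^\infty$, then $|y^*_{m,n}(y)|=|x^*(\sum_{j=m}^n f_j(y)x_j)|\leq C\|x^*\|\|y\|$. Consequently the set
$$F_k:=\{S^*(y^*_{m,n}) : k\geq n\geq m,\ x^*\in B_{X^*}\}$$
is a bounded subset of the finite-dimensional subspace $S^*(\spa(f_1,\dots,f_k))\subset [z_i^*]$, and is therefore relatively compact in $[z_i^*]$.

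Finally, because $(z_i^*)_{i=1}^\infty$ is a basis for $[z_i^*]$, the tail projections $Q_N$ are uniformly bounded and $\|Q_N z^*\|\to 0$ for every fixed $z^*\in [z_i^*]$. A standard equicontinuity argument upgrades this pointwise convergence to uniform convergence on the relatively compact set $F_k$, so we may select $N\in\N$ with $\|Q_N z^*\|<\vp$ for all $z^*\in F_k$. Homogeneity in $x^*$ then delivers the stated inequality over all $x^*\in X^*$. The main (and essentially only) subtlety is the demand for uniformity over $x^*\in B_{X^*}$, not just a single $x^*$: for a fixed $x^*$ the collection of $y^*_{m,n}$ is merely finite, but once $x^*$ is allowed to vary we need the relative compactness of $F_k$ to rule out $N$ depending on $x^*$. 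This is exactly where the observation that $y^*_{m,n}\in\spa(f_1,\dots,f_k)$ is indispensable.
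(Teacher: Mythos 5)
Your proof is correct and follows essentially the same route as the paper: both identify the sum as the tail projection $P_{[N,\infty)}\circ S^*$ applied to $\sum_{j=m}^{n}x^*(x_j)f_j$, which ranges over a bounded subset of the finite-dimensional space $\spa(f_1,\dots,f_k)$, and then obtain uniformity in $x^*$ from compactness (the paper implements your ``equicontinuity on a relatively compact set'' step concretely via a finite $\frac{\vp}{2\|S\|}$-net together with the uniform boundedness of the tail projections). The only cosmetic difference is that you bound $\|\sum_{j=m}^n x^*(x_j)f_j\|$ via the frame constant of $(x_i,f_i)$ acting on $X$, while the paper uses the frame constant of $(f_i,x_i)$ as a frame for $X^*$; these give the same bound.
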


\begin{proof}
Let $k\in\N$ and $\vp>0$.  By renorming $Z$, we may assume without
loss of generality that $(z_i)_{i=1}^\infty$ is bimonotone.  Let
$K\geq1$ be the frame constant of the frame $(f_i,x_i)_{i=1}^\infty$
for $X^*$. We choose a finite $\frac{\vp}{2\|S\|}$-net
$(y^*_\alpha)_{\alpha\in A}$ in $\{y^*\in K\cdot B_{Y^*}\,:\, y^*\in
span_{1\leq i\leq k}(f_i)\}$.  By Theorem \ref{T:CL1.8}, the bounded
operator $S^*:X^*\rightarrow [z_i^*]$ is given by
$S^*(x^*)=\sum_{i=1}^{\infty}x^*(x_i)z^*_i$ for all $x^*\in X^*$. As
$(z_i^*)_{i=1}^\infty$ is a basis for $[z^*]_{i=1}^\infty$, for each
$\alpha\in A$, there exists $N_\alpha\in\N$ such that
$\|P_{[N_\alpha,\infty)}\circ
S^*(y^*_\alpha)\|=\|\sum_{i=N_\alpha}^{\infty}y_\alpha^*(x_i)z^*_i\|<\frac{\vp}{2}$.
We set $N=\max_{\alpha\in A}N_\alpha$. Given, $x^*\in B_X^*$ and
$m,n\in\N$ such that $k\geq n\geq m$, we choose $\alpha\in A$ such
that $\|y^*_\alpha-\sum_{j=m}^{n}x^*(x_j)f_j\|<\frac{\vp}{2\|S\|}$,
which yields the following estimates.
\begin{align*}
\|\sum_{i=N}^{\infty}(\sum_{j=m}^{n}x^*(x_j)f_j(x_i))z^*_i\|&=\|P_{[N,\infty)}\circ S^*(\sum_{j=m}^{n}x^*(x_j)f_j)\|\\
&\leq\|P_{[N,\infty)}\circ S^*(y^*_\alpha)\|+\|P_{[N,\infty)}\circ S^*(y^*_\alpha-\sum_{j=m}^{n}x^*(x_j)f_j)\|\\
&\leq\|P_{[N,\infty)}\circ S^*(y^*_\alpha)\|+\|P_{[N,\infty)}\|\|S\|\|y^*_\alpha-\sum_{j=m}^{n}x^*(x_j)f_j\|\\
&<\frac{\vp}{2} + \|S\|\frac{\vp}{2\|S\|}=\vp\\
\end{align*}

\end{proof}

\begin{lem}\label{L:2.2}
Let $X$ be a Banach space with a shrinking Schauder frame
$(x_i,f_i)_{i=1}^\infty$ and
  let $Z$ be a Banach space with a basis $(z_i)_{i=1}^\infty$. Then for all $k\in\N$ and
$\vp>0$, there exists $N\in\N$ such that
$$\sup_{n\geq m\geq N}\|\sum_{i=1}^{k}(\sum_{j=m}^{n}x^*(x_j)f_j(x_i))z^*_i\|<\vp\|x^*\|\qquad\textrm{for all }x^*\in X^*$$

\end{lem}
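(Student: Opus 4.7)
The plan is to reduce the $Z^*$-valued expression to a finite sum whose norm is controlled by the tail decay of the Schauder frame expansion of each fixed vector $x_i$ for $1\le i\le k$. Unlike Lemma \ref{L:2.1}, here the dependence on $x^*$ enters only as a scalar evaluation, so no $\vp$-net over $X^*$ or appeal to a dualized reconstruction operator will be necessary.

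First I would rewrite the coefficient on $z_i^*$ as
$$\sum_{j=m}^{n}x^*(x_j)f_j(x_i)\;=\;x^*\!\Bigl(\sum_{j=m}^{n}f_j(x_i)\,x_j\Bigr),$$
so that the entire expression reads $\sum_{i=1}^{k} x^*(y_{m,n,i})\,z_i^*$, where I denote $y_{m,n,i}:=\sum_{j=m}^{n}f_j(x_i)\,x_j\in X$. Because $(x_i,f_i)_{i=1}^\infty$ is a Schauder frame, the series $x_i=\sum_{j=1}^\infty f_j(x_i)\,x_j$ converges in norm, and hence its two-sided tails $y_{m,n,i}$ satisfy the Cauchy condition $\sup_{n\ge m\ge M}\|y_{m,n,i}\|\to 0$ as $M\to\infty$ for each fixed $i$.

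Next, for each $i\in\{1,\dots,k\}$ I would choose $N_i\in\N$ such that
$$\sup_{n\ge m\ge N_i}\|y_{m,n,i}\|<\frac{\vp}{k\,M_0},\qquad\text{where }M_0=1+\max_{1\le j\le k}\|z_j^*\|,$$
and set $N:=\max_{1\le i\le k}N_i$. Then the triangle inequality in $Z^*$, together with the factorization above, yields for every $x^*\in X^*$ and every $n\ge m\ge N$:
$$\Bigl\|\sum_{i=1}^{k}x^*(y_{m,n,i})\,z_i^*\Bigr\|\;\le\;\sum_{i=1}^{k}|x^*(y_{m,n,i})|\,\|z_i^*\|\;\le\;\|x^*\|\sum_{i=1}^{k}\|y_{m,n,i}\|\,\|z_i^*\|\;<\;\vp\|x^*\|,$$
which is the desired uniform bound.

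There is no real obstacle in this argument, which is essentially a one-sided dualization of Lemma \ref{L:0}. The shrinking hypothesis on $(x_i,f_i)_{i=1}^\infty$ is not actually invoked in the estimate — it appears in the hypothesis only to keep the lemma compatible with the surrounding framework where the dual frame $(f_i,x_i)_{i=1}^\infty$ of Theorem \ref{T:CL2.3} is routinely used. The one minor technical point is merely to avoid division by zero when some $\|z_j^*\|=0$, handled by the $+1$ in the definition of $M_0$.
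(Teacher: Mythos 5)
Your proposal is correct and is essentially identical to the paper's proof: both rewrite the coefficient as $x^*\bigl(\sum_{j=m}^n f_j(x_i)x_j\bigr)$, use norm convergence of the frame expansion of each fixed $x_i$ to make the tails uniformly small for $1\le i\le k$, and finish with the triangle inequality. Your side remarks (that shrinking is not actually needed, and the guard against $\|z_i^*\|=0$ --- which in fact cannot occur for biorthogonal functionals of a basis) are accurate but do not change the argument.
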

\begin{proof}

Let $k\in\N$ and $\vp>0$. As $(x_j,f_j)_{j=1}^\infty$ is a Schauder frame for $X$, the series $\sum_{j=1}^\infty f_j(x_i)x_j$ converges
in norm to $x_i$ for all $i\in\N$.  Thus there exists $N\in\N$ such that
$\sup_{n\geq m\geq N}\|\sum_{j=m}^n f_j(x_i)x_j\|<\frac{\vp}{k\|z^*_i\|}$ for all $1\leq i\leq k$.  For $x^*\in B_{X^*}$ and $n\geq m\geq N$, we have that
$$
\|\sum_{i=1}^{k}(\sum_{j=m}^{n}x^*(x_j)f_j(x_i))z^*_i\|\leq\sum_{i=1}^{k}\|\sum_{j=m}^{n}f_j(x_i)x_j\|\|z^*_i\|<\sum_{i=1}^k\frac{\vp}{k\|z^*_i\|}\|z_i^*\|=\vp\\
$$
\end{proof}

The following lemma and theorem are based on an idea of W. B.
Johnson \cite{J}, and are analogous to Proposition 3.1 in
\cite{FOSZ}, and Lemma 4.3 in \cite{OS}.  Their importance comes
from allowing us to use arguments that require `skipping
coordinates', and in particular, will allow us to apply Proposition
\ref{P:skipping}.

\begin{lem}\label{L:1.3}
Let $X$ be a Banach space with a boundedly complete Schauder frame
$(x_i,f_i)_{i=1}^\infty\subset X\times X^*$. Let
 $\vp_i\searrow0$ and $(p_i)_{i=1}^\infty\in[\N]^\omega$. There exists $(k_i)_{i=1}^\infty\in[\N]^\omega$
 such that for all $x^{**}\in X^{**}$ and for all $N\in\N$ there exists $M\in\N$ such that $k_N< M<k_{N+1}$ and
 $$\sup_{p_{M+1}> n\geq m\geq {p_{M-1}}} \|\sum_{i=m}^n x^{**}(f_i) x_i\|<\vp_N\|x^{**}\|.
 $$
\end{lem}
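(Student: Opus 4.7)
The plan is to construct $(k_i)$ by induction on $N$, using at each step the $w^*$-compactness of $B_{X^{**}}$ together with the bounded completeness of the frame. Given $k_N$, I will choose $k_{N+1}$ large enough that the open window $(k_N,k_{N+1})$ contains, for every $x^{**}\in B_{X^{**}}$ simultaneously, an index $M$ with the prescribed block estimate.

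For each integer $M>k_N$ set
$$A_M:=\Big\{x^{**}\in B_{X^{**}}\;:\;\sup_{p_{M+1}>n\ge m\ge p_{M-1}}\Big\|\sum_{i=m}^n x^{**}(f_i)x_i\Big\|\ge \vp_N\Big\}.$$
For each fixed pair $m\le n$, the map $x^{**}\mapsto \sum_{i=m}^n x^{**}(f_i)x_i$ is a finite linear combination, with fixed coefficient vectors $x_i$, of the $w^*$-continuous scalar maps $x^{**}\mapsto x^{**}(f_i)$, hence is $w^*$-to-norm continuous; composing with the norm and taking the finite supremum over the indicated range of $m,n$ produces a $w^*$-continuous real-valued function on $X^{**}$. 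Therefore each $A_M$ is $w^*$-closed in the Banach--Alaoglu $w^*$-compact ball $B_{X^{**}}$.

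I claim there exists $L>k_N$ with $\bigcap_{k_N<M<L}A_M=\emptyset$. If not, the family $\{A_M:M>k_N\}$ of $w^*$-compacta has the finite intersection property, so $\bigcap_{M>k_N}A_M\ne\emptyset$. Pick $x^{**}$ in this intersection; since $\vp_N>0$, $x^{**}\ne 0$, and for every $M>k_N$ there exist $p_{M-1}\le m\le n<p_{M+1}$ with $\|\sum_{i=m}^n x^{**}(f_i)x_i\|\ge \vp_N$. But bounded completeness says $\sum_i x^{**}(f_i)x_i$ converges in norm, whence its partial sums are Cauchy and $\|\sum_{i=m}^n x^{**}(f_i)x_i\|\to 0$ as $m\to\infty$ (with $m\le n$), a contradiction. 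Thus $L$ exists; set $k_{N+1}:=L$ and continue the induction starting from $k_1:=1$.

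With $(k_N)$ so constructed, for any nonzero $x^{**}\in X^{**}$ and any $N$, apply the emptiness of $\bigcap_{k_N<M<k_{N+1}}A_M$ to the unit-ball element $x^{**}/\|x^{**}\|$: some $M\in(k_N,k_{N+1})$ excludes it from $A_M$, which rescales back to the required inequality. The one technical point that warrants care is the $w^*$-closedness of the $A_M$; the remainder is a clean finite-intersection-property pigeonhole coupled with bounded completeness.
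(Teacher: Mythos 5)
Your proof is correct and rests on the same two ingredients as the paper's argument: the $w^*$-compactness of $B_{X^{**}}$ and the norm convergence of $\sum_{i=1}^\infty x^{**}(f_i)x_i$ supplied by bounded completeness. The paper argues contrapositively, fixing the pairs $(m_M,n_M)$ by a pigeonhole on the finite intervals $[p_{M-1},p_{M+1}]$ and extracting a $w^*$-limit of the failing witnesses $x^{**}_K$, whereas you package the identical compactness step as the finite intersection property of the $w^*$-closed sets $A_M$ --- a cleaner organization of the same argument.
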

\begin{proof}
Assume not, then there exists $\vp>0$ and $K_0\in\N$ such that for
all $K>K_0$ there exists $x^{**}_{K}\in B_{X^{**}}$ such that for
all $K_0< M<K$ there exists $n_{K,M},m_{K,M}\in\N$ with $p_{M-1}\leq
m_{K,M}\leq n_{K,M}< p_{M+1}$ and
 $\|\sum_{i=m_{K,M}}^{n_{K,M}} x_{K}^{**}(f_i) x_i\|>\vp$. As $[p_{M-1},p_{M+1}]$ is finite, we may choose a sequence $(K_i)_{i=1}^\infty\in[\N]^\omega$ such
 that for every $M\in\N$ there exists $n_M,m_M\in\N$
 such that $n_{K_i,M}=n_M$ and $m_{K_i,M}=m_M$ for all $i\geq M$.  After passing to a further subsequence
  of $(K_i)_{i=1}^\infty$, we may assume that there exists $x^{**}\in X^{**}$ such that
  $x^{**}_{K_i}(f_j)\rightarrow x^{**}(f_j)$ for all $j\in\N$.  Thus $\|\sum_{i=m_M}^{n_M} x^{**}(f_i) x_i\|\geq\vp$.
  This contradicts that the series $\sum_{i=1}^{\infty} x^{**}(f_i) x_i$ is norm convergent.
\end{proof}

\begin{thm}\label{T:1.4}
Let $X$ be a Banach space with a shrinking Schauder frame
$(x_i,f_i)_{i=1}^\infty$.
  Let $Z$ be a Banach space with basis $(z_i)_{i=1}^\infty$ such that $(x_i,f_i)_{i=1}^\infty$ is strongly shrinking relative to $(z_i)_{i=1}^\infty$. Let $(p_i)_{i=1}^\infty\in[\N]^\omega$ and
$(\delta_i)_{i=1}^\infty\subset (0,1)$ with $\delta_i\searrow0$.
Then there exists
$(q_i)_{i=1}^\infty,(N_i)_{i=1}^\infty\in[\N]^\omega$ such that for
any $(k_i)_{i=0}^\infty\in[\N]^\omega$ and $y^*\in S_{X^*}$, there
exists $y^*_i\in X^*$ and $t_i\in (N_{k_{i-1}-1},N_{k_{i-1}})$ for
all $i\in\N$ with $N_0=0$ and $t_0=0$ so that the following hold
\begin{enumerate}
\item[(a)] $y^*=\sum_{i=1}^\infty y^*_i$\\
and for all $\ell\in\N$ we have
\item[(b)]either $\|y^*_\ell\|<\delta_\ell$ or $\sup_{p_{q_{t_{\ell-1}}}\geq n\geq m}\|\sum_{j=m}^n y^*_\ell(x_j)f_j\|<\delta_\ell\|y^*_\ell\|$ and\\
$\sup_{n\geq m\geq p_{q_{t_{\ell}}}}\|\sum_{j=m}^n
y^*_\ell(x_j)f_j\|<\delta_\ell\|y_\ell^*\|$,
\item[(c)] $\|P_{[p_{q_{N_{k_\ell}}}, p_{q_{N_{k_{\ell+1}}}})}\circ S^*(y^*_{\ell-1}+y^*_\ell+y^*_{\ell+1}-y^*)\|_{Z^*}<\delta_\ell ,$
\end{enumerate}
where $P_{I}$ is the projection operator $P_I:[z^*_i]\rightarrow
[z^*_i]$ given by $P_I(\sum a_i z^*_i)=\sum_{i\in I}a_i z^*_i$ for
all $\sum a_i z^*_i\in[z^*_i]$ and all intervals $I\subseteq\N$.

\end{thm}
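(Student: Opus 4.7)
The plan is to combine Theorem \ref{T:CL2.3} with Lemma \ref{L:1.3} to produce the cut-point sequence $(N_i)$, and then to choose $(q_i)$ diagonally using Lemmas \ref{L:0}, \ref{L:1}, \ref{L:2.1}, and \ref{L:2.2} so that all the tail and initial-segment estimates needed in (b) and (c) are absorbed under $\delta_\ell$. Since $(x_i,f_i)_{i=1}^\infty$ is shrinking, Theorem \ref{T:CL2.3} gives that $(f_i,x_i)_{i=1}^\infty$ is a boundedly complete Schauder frame for $X^*$. Applying Lemma \ref{L:1.3} to this dual frame, with the given $(p_i)$ and a rapidly decreasing sequence $(\vp_{\tilde N})\subset(0,1)$ (chosen small relative to the $\delta_\ell$'s, $\|S\|$, and $\|T\|$), yields $(N_i)\in[\N]^\omega$ such that for every $y^*\in X^*$ (identified with its canonical image in $X^{***}$) and every $\tilde N\in\N$ there exists $M\in(N_{\tilde N-1},N_{\tilde N})$ with
\[
\sup_{p_{M+1}>n\geq m\geq p_{M-1}}\bigl\|\sum_{j=m}^{n}y^*(x_j)f_j\bigr\|_{X^*}<\vp_{\tilde N}\|y^*\|.
\]
Given any $(k_i)_{i=0}^\infty$ and $y^*\in S_{X^*}$, taking $\tilde N=k_{i-1}$ in this statement produces the required cut points $t_i\in(N_{k_{i-1}-1},N_{k_{i-1}})$, which are automatically strictly increasing because $k_{i-1}<k_i$ and $(N_i)$ is strictly increasing.

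Next, I would construct $(q_i)\in[\N]^\omega$ inductively so that for each $v$, $p_{q_v}$ is large enough to force four families of estimates with tolerances decaying at a rate summable against the $\delta_\ell$'s. Lemmas \ref{L:0} and \ref{L:1}, applied to the frame $(f_i,x_i)$ for $X^*$, control the interaction sums $\sum_{j'=m}^{n}f_i(x_{j'})f_{j'}$ in $X^*$ when $i$ is bounded in terms of $v$ and $[m,n]$ lies either above $p_{q_v}$ or entirely below it; Lemmas \ref{L:2.1} and \ref{L:2.2} control the $Z^*$-tails and initial segments $\|P_{[p_{q_v},\infty)}\circ S^*\circ U_{[m,n]}(y^*)\|_{Z^*}$ and $\|P_{[1,p_{q_v})}\circ S^*\circ U_{[m,n]}(y^*)\|_{Z^*}$ when the range $[m,n]$ is appropriately bounded in terms of $v$, where $U_{[m,n]}(y^*):=\sum_{j=m}^{n}y^*(x_j)f_j$.

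Finally, with $t_0=0$ and $p_{t_0}:=1$, set
\[
y^*_\ell=\sum_{j=p_{t_{\ell-1}}}^{p_{t_\ell}-1}y^*(x_j)f_j,\qquad\ell\geq 1.
\]
Property (a), $y^*=\sum_\ell y^*_\ell$, is immediate from the frame expansion $y^*=\sum_j y^*(x_j)f_j$ in $X^*$ together with the fact that the intervals $[p_{t_{\ell-1}},p_{t_\ell})$ partition $\N$. Property (b) follows from the choice of $(q_i)$: expanding $y^*_\ell(x_{j'})=\sum_{i\in[p_{t_{\ell-1}},p_{t_\ell})}y^*(x_i)f_i(x_{j'})$, the sums $\sum_{j'=m}^{n}y^*_\ell(x_{j'})f_{j'}$ are bounded by $\delta_\ell\|y^*_\ell\|$ whenever $[m,n]\subseteq[1,p_{q_{t_{\ell-1}}}]$ or $[m,n]\subseteq[p_{q_{t_\ell}},\infty)$, by Lemmas \ref{L:0} and \ref{L:1}. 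For property (c), write $y^*_{\ell-1}+y^*_\ell+y^*_{\ell+1}-y^*=-\sum_{j\notin\{\ell-1,\ell,\ell+1\}}y^*_j$; for each $j\leq\ell-2$, the defining indices of $y^*_j$ lie in $[1,p_{t_{\ell-2}})\subseteq[1,p_{N_{k_{\ell-3}}})$, well to the left of $p_{q_{N_{k_\ell}}}$, and Lemma \ref{L:2.1} bounds $\|P_{[p_{q_{N_{k_\ell}}},\infty)}S^*(y^*_j)\|_{Z^*}$; Lemma \ref{L:2.2} symmetrically handles $j\geq\ell+2$. The main obstacle will be the careful coordinated choice of $(q_i)$ and $(\vp_{\tilde N})$ so that the sums of all these estimates, over the countably many $j$'s contributing to (c) and the four families of interactions in (b), stay uniformly under $\delta_\ell$ for every admissible $(k_i)$ and every $y^*\in S_{X^*}$.
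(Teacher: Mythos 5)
Your overall architecture matches the paper's: dualize via Theorem \ref{T:CL2.3}, use Lemma \ref{L:1.3} to locate cut points, diagonalize Lemmas \ref{L:0}, \ref{L:1}, \ref{L:2.1} and \ref{L:2.2} along a blocking $(q_i)$, and define the $y^*_\ell$ as consecutive segments of the frame expansion of $y^*$. But there are three concrete problems. First, the order of your two constructions is backwards: you apply Lemma \ref{L:1.3} to the raw sequence $(p_i)$ and only afterwards choose $(q_i)$, and accordingly your decomposition cuts at $p_{t_\ell}$ rather than $p_{q_{t_\ell}}$. The paper first fixes $(q_i)$ and then applies Lemma \ref{L:1.3} to the blocked sequence $(p_{q_i})$, so that the ``good window'' $[p_{q_{t_\ell-1}},p_{q_{t_\ell+1}})$ it produces is aligned with the block structure. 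With your ordering the window $[p_{M-1},p_{M+1})$ bears no relation to the $q$-blocks, so the cut at a $q$-block boundary need not fall inside any window where the frame coefficients of $y^*$ are small; this alignment is exactly what (b), which is stated at $p_{q_{t_{\ell-1}}}$ and $p_{q_{t_\ell}}$, requires.

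Second, (b) does not follow from Lemmas \ref{L:0} and \ref{L:1} alone. Those lemmas control the interaction of a segment $\sum_{j=m}^n y^*_\ell(x_j)f_j$ with the support of $y^*_\ell$ only when there is a full $q$-block of separation; the edge block $[p_{q_{t_\ell-1}},p_{q_{t_\ell}})$ of $y^*_\ell$ is adjacent to the region $[p_{q_{t_\ell}},\infty)$ with no separation, so they say nothing about it. The paper splits $y^*_\ell$ and controls the edge block by the good-window estimate (its norm as a functional is at most $\vp_{t_\ell}$), which is the entire point of invoking Lemma \ref{L:1.3}; your verification of (b) never uses that estimate. Third, your term-by-term treatment of (c) cannot work: bounding $\|P_{[p_{q_{N_{k_\ell}}},\infty)}\circ S^*(y^*_j)\|_{Z^*}$ separately for each $j\le\ell-2$ and summing yields a total of order $\sum_{j=1}^{\ell-2}\vp_j$, which is increasing in $\ell$ and therefore cannot be $<\delta_\ell\searrow0$; you flag this summability issue as ``the main obstacle'' but the route you propose cannot overcome it. The fix (and the paper's route) is to observe that $\sum_{j\le\ell-2}y^*_j$ and $\sum_{j\ge\ell+2}y^*_j$ are each a \emph{single} segment of the frame expansion of $y^*$ (namely $\sum_{j=1}^{p_{q_{t_{\ell-2}}}-1}y^*(x_j)f_j$ and the corresponding tail), so the diagonalized forms of Lemmas \ref{L:2.1} and \ref{L:2.2} apply once each, giving a bound $\vp_{N_{k_\ell}-1}+\vp_{N_{k_\ell}+1}<\delta_\ell$.
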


\begin{proof}
By Theorems \ref{T:CL2.3} and \ref{T:CL1.8},
$(f_i,x_i)_{i=1}^\infty$ is a boundedly complete frame for $X^*$
with associated basis $(z_i^*)_{i=1}^\infty$. After renorming, we
may assume without loss of generality that $(z_i)_{i=1}^\infty$ is
bimonotone. We let $K$ be the frame constant of
$(f_i,x_i)_{i=1}^\infty$. Let $\vp_i\searrow0$ such that
$2\vp_{i+1}<\vp_{i}<\delta_i$ and $(1+K)\vp_{i}<\delta^2_{i+1}$ for
all $i\in\N$.

By repeatedly applying Lemma \ref{L:1} to the frame
$(x_i,f_i)_{i=1}^\infty$ of $X$,
 we may choose $(q_k)_{k=1}^\infty\in[\N]^\omega$ such that for all $k\in\N$,

\begin{equation}\label{E:1}
\sup_{n\geq m\geq p_{q_{k+1}}> p_{q_{k}}\geq n_0\geq m_0}\|\sum_{i=m_0}^{n_0} f_i(\sum_{j=m}^n f_j(x) x_j)x_i\|\leq\vp_k\|x\|\qquad\textrm{ for all }x\in X.
\end{equation}
By Lemma \ref{L:2.2}, after possibly passing to a subsequence of
$(q_k)_{k=1}^\infty$, we may assume that for all $k\in\N$,
\begin{equation}\label{E:3}
\sup_{n\geq m\geq p_{q_{k+1}}}\|\sum_{i=1}^{p_{q_k}}(\sum_{j=m}^n x^*(x_j)f_j(x_i))z_i^*\|\leq\vp_k\|x^*\|\qquad\textrm{ for all }x^*\in X^*.
\end{equation}

By applying Lemma \ref{L:0} to the frame $(x_i,f_i)_{i=1}^\infty$ of
$X$,  after possibly passing to a subsequence of
$(q_k)_{k=1}^\infty$, we may assume that for all $k\in\N$,
\begin{equation}\label{E:2}
\sup_{n\geq m\geq p_{q_{k+1}}> p_{q_{k}}\geq n_0\geq m_0}\|\sum_{i=m}^n f_i(\sum_{j=m_0}^{n_0}f_j(x)x_j)x_i\|\leq\vp_{k+1}\|x\|\qquad\textrm{ for all }x\in X.
\end{equation}

By Lemma \ref{L:2.1}, after possibly passing to a subsequence of $(q_k)_{k=1}^\infty$, we may assume that
for all $k\in\N$,
\begin{equation}\label{E:4}
\sup_{p_{q_{k}}> n\geq m\geq 1}\|\sum_{i=p_{q_{k+1}}}^{\infty}(\sum_{j=m}^n x^*(x_j)f_j(x_i))z^*_i\|\leq\vp_{k+1}\|x^*\|\qquad\textrm{ for all }x^*\in X^*.
\end{equation}

By Lemma \ref{L:1.3}, there exists $(N_i)_{i=0}^\infty\in[\N]^\omega$
 such that $N_0=0$ and for all $x^{*}\in X^{*}$ and for all $k\in\N$ there exists $t_k\in\N$ such that $N_k< t_k <N_{k+1}$ and
 $\sup_{p_{q_{t_k-1}}\leq n\leq m <p_{q_{t_k+1}}}\|\sum_{i=n}^m x^{*}(x_i) f_i\|<\vp_k\|x^*\|$.

Let $(k_i)_{i=0}^\infty\in[\N]^\omega$ and $y^*\in S_{X^*}$.  For each $i\in\N$, we choose $t_i\in (N_{k_{i}},N_{k_{i+1}})$
 with $t_0=1$
such that
\begin{equation}\label{E:5}
\sup_{ p_{q_{t_i+1}}> n\geq m \geq p_{q_{t_i-1}}}\|\sum_{j=m}^n
y^{*}(x_j) f_j\|<\vp_i.
\end{equation}
We now set $y^*_i=\sum_{j=p_{q_{t_{i-1}}}}^{p_{q_{t_i}}-1}y^*(x_j)f_j$ for all $i\in\N$.
We have that,
$$\sum_{i=1}^\infty y^*_i=\sum_{i=1}^\infty\sum_{j=p_{q_{t_{i-1}}}}^{p_{q_{t_i}}-1}y^*(x_j)f_j=\sum_{j=1}^\infty y^*(x_j)f_j=y^*.$$
Thus $(a)$ is satisfied.  In order to prove $(b)$, we let $\ell\in\N$ and assume that $\|y^*_\ell\|>\delta_\ell$.
Let $m,n\in\N$ such that $n\geq m\geq p_{q_{t_{\ell}}}$.
To prove property (b), we consider the following inequalities.
\begin{align*}
\|\sum_{j=m}^n y^*_\ell(x_j)f_j\|&= \|\sum_{j=m}^n \sum_{i=p_{q_{t_{\ell-1}}}}^{p_{q_{t_{\ell}}}-1}y^*(x_i)f_i(x_j)f_j\|\\
&\leq \|\sum_{j=m}^n \sum_{i=p_{q_{t_\ell-1}}}^{p_{q_{t_{\ell}}}-1}y^*(x_i)f_i(x_j)f_j\|+\|\sum_{j=m}^n \sum_{i=p_{q_{t_{\ell-1}}}}^{p_{q_{t_{\ell}-1}}-1}y^*(x_i)f_i(x_j)f_j\|\\
&\leq K\|\sum_{i=p_{q_{t_\ell-1}}}^{p_{q_{t_{\ell}}}-1}y^*(x_i)f_i\|+\|\sum_{j=m}^n \sum_{i=p_{q_{t_{\ell-1}}}}^{p_{q_{t_{\ell}-1}}-1}y^*(x_i)f_i(x_j)f_j\|\\
&< K \vp_{t_\ell}+\vp_{t_{\ell}} \qquad\qquad\qquad\textrm{ by }(\ref{E:5})\textrm{ and }(\ref{E:2})\\
&<
(1+K)\vp_{t_\ell}\|y^*_\ell\|/\delta_\ell<(1+K)\vp_{\ell}\|y^*_\ell\|/\delta_\ell<\delta_\ell\|y^*_\ell\|.
\end{align*}
Thus $\sup_{n\geq m\geq p_{q_{t_\ell}}}\|\sum_{j=m}^n
y^*_\ell(x_j)f_j\|<\delta_\ell\|y^*_\ell\|$, proving one of the
inequalities in (b). We now assume that $\ell>1$, and let $m,n\in\N$
such that $p_{q_{t_{\ell-1}}}\geq n\geq m$. To prove the remaining
inequality in (b), we consider the following.
\begin{align*}
\|\sum_{j=m}^n y^*_\ell(x_j)f_j\|&= \|\sum_{j=m}^n \sum_{i=p_{q_{t_{\ell-1}}}}^{p_{q_{t_{\ell}}}-1}y^*(x_i)f_i(x_j)f_j\|\\
&\leq \|\sum_{j=m}^n \sum_{i=p_{q_{t_{\ell-1}+1}}}^{p_{q_{t_{\ell}}}-1}y^*(x_i)f_i(x_j)f_j\|+\|\sum_{j=m}^n \sum_{i=p_{q_{t_{\ell-1}}}}^{p_{q_{t_{\ell-1}+1}}-1}y^*(x_i)f_i(x_j)f_j\|\\
&\leq \|\sum_{j=m}^n \sum_{i=p_{q_{t_{\ell-1}+1}}}^{p_{q_{t_{\ell}}}-1}y^*(x_i)f_i(x_j)f_j\| + K \|\sum_{i=p_{q_{t_{\ell-1}}}}^{p_{q_{t_{\ell-1}+1}}-1}y^*(x_i)f_i\|\\
&< \vp_{t_{\ell-1}+1}+ K \vp_{\ell-1}\qquad\qquad\qquad\textrm{ by }(\ref{E:1})\textrm{ and }(\ref{E:5})\\
&< (\vp_{t_{\ell-1}+1}+ K
\vp_{\ell-1})\|y^*_\ell\|/\delta_\ell<(1+K)\vp_{\ell-1}\|y^*_\ell\|/\delta_\ell<\delta_\ell\|y^*_\ell\|.
\end{align*}
Thus $\sup_{p_{q_{t_{\ell-1}}}\geq n\geq m} \|\sum_{j=m}^n
y^*_\ell(x_j)f_j\|<\delta_\ell\|y^*_\ell\|$, and hence all of
 $(b)$ is satisfied.
To prove $(c)$, we now consider the following,
\begin{align*}
\|P_{[p_{q_{N_{k_{\ell}}}}, p_{q_{N_{k_{\ell+1}}}})}&S^*(y^*_{\ell-1}+y^*_\ell+y^*_{\ell+1}-y^*)\|_{Z^*}=
\|\sum_{i=p_{q_{N_{k_\ell}}}}^{p_{q_{N_{k_{\ell+1}}}}-1}(y^*_{\ell-1}+y^*_\ell+y^*_{\ell+1}-y^*)(x_i)z^*_i\|\\
&=\|\sum_{i=p_{q_{N_{k_\ell}}}}^{p_{q_{N_{k_{\ell+1}}}}-1}\left(\sum_{j=1}^{p_{q_{t_{\ell-2}}}-1}y^*(x_j)f_j(x_i)+\sum_{j=p_{q_{t_{\ell+1}}}}^{\infty}y^*(x_j)f_j(x_i)\right)z^*_i\|\\
&\leq\|\sum_{i=p_{q_{N_{k_\ell}}}}^{p_{q_{N_{k_{\ell+1}}}}-1}\left(\sum_{j=1}^{p_{q_{t_{\ell-2}}}-1}y^*(x_j)f_j(x_i)\right)z^*_i\|+\|\sum_{i=p_{q_{N_\ell}}}^{p_{q_{N_{k_{\ell+1}}}}-1}\left(\sum_{j=p_{q_{t_{\ell+1}}}}^{\infty}y^*(x_j)f_j(x_i)\right)z^*_i\|\\
&\leq\|\sum_{i=p_{q_{N_{k_\ell}}}}^{\infty}\left(\sum_{j=1}^{p_{q_{t_{\ell-2}}}-1}y^*(x_j)f_j(x_i)\right)z^*_i\|+\|\sum_{i=1}^{p_{q_{N_{k_{\ell+1}}}}}\left(\sum_{j=p_{q_{t_{\ell+1}}}}^{\infty}y^*(x_j)f_j(x_i)\right)z^*_i\|\\
&<\vp_{N_{k_\ell}-1}+\vp_{N_{k_{\ell}+1}}<\vp_\ell\qquad\textrm{ by
}(\ref{E:4})\textrm{ and }(\ref{E:3}).
\end{align*}
Thus $(c)$ is satisfied.
\end{proof}

Properties of coordinate systems for Banach spaces such as frames,
bases and FDDs can impose certain structure on infinite dimensional
subspaces. For our purposes, this structure can be intrinsically
characterized in terms of even trees of vectors \cite{OSZ1}. In
order to index even trees, we define
$$T_\infty^{\mathrm{even}}=\{(n_1, . . . ,n_{2\ell}):n_1<\cdot\cdot\cdot<n_{2\ell} \mbox{ are in } \N \mbox{ and } \ell\in\N\}.$$
If X is a Banach space, an indexed family $(x_\alpha)_{\alpha\in
T_\infty^{\mathrm{even}}}\subset X$ is called an \emph{even tree}.
Sequences of the form
$(x_{(n_1,...,n_{2\ell-1},k)})^\infty_{k=n_{2\ell-1}+1}$ are called
\emph{nodes}. This should not be confused with the more standard
terminology where a node would refer to an individual member of the
tree. Sequences of the form
$(n_{2\ell-1},x_{(n_1,...,n_{2\ell})})^\infty_{\ell=1}$ are called
\emph{branches}. A \emph{normalized tree}, i.e. one with
$\|x_\alpha\|=1$ for all $\alpha\in T_\infty^{\mathrm{even}}$, is
called \emph{weakly null} (or \emph{$w^*$-null}) if every node is a
weakly null (or $w^*$-null) sequence.

Given $1>\vp>0$ and $A\subset (\N\times S_{X^*})^\omega$, we let
$A_\vp=\{(l_i,y^*_i)\in(\N\times S_{X^*})^\omega: \exists
(k_i,x^*_i)\in A\textrm{ such that } k_i\leq \ell_i,
\|x_i^*-y_i^*\|<\vp 2^{-i}\forall i\in\N\},$ and we let
$\overline{A}_\vp$ be the closure of $A_\vp$ in $(\N\times
S_{X^*})^\omega$. We consider the following game between players $S$
(subspace chooser) and $P$ (point chooser). The game has an infinite
sequence of moves; on the $n^{th}$ move $S$ picks $k_n\in\N$ and a
cofinite dimensional $w^*$-closed subspace $Z_n$ of $X^*$ and $P$
responds by picking an element $x^*_n\in S_{X^*}$ such that
$d(x^*_n,Z_n)<\vp 2^{-n}$. S wins the game if the sequence
$(k_i,x_i)_{i=1}^\infty$ the players generate is an element of
$\overline{A}_{5\vp}$, otherwise $P$ is declared the winner. This is
referred to as the $(A,\vp)$-game and was introduced in \cite{OSZ1}.
The following proposition is essentially an extension of Proposition
2.6 in \cite{FOSZ} from FDDs to frames, and relates properties of
$w^*$-null even trees and winning strategies of the $(A,\vp)$-game
to blockings of a frame.

\begin{prop}\label{P:skipping}
Let $X$ be an infinite-dimensional Banach space with a shrinking Schauder frame $(x_i,f_i)_{i=1}^\infty$. Let
$A\subseteq (\N\times S_{X^*})^\omega$.  The following are equivalent.
\begin{enumerate}

\item For all $\vp>0$ there exists $(K_i)_{i=1}^\infty\in[\N]^\omega$ and $\bar{\delta}=(\delta_i)\subset(0,1)$ with $\delta_i\searrow0$
 and $(p_i)_{i=1}^\infty\in[\N]^\omega$ such that if $(y^*_i)_{i=1}^\infty\subset S_{X^*}$ and $(r_i)_{i=0}^\infty\in[\N]^\omega$ so that
 $\sup_{ p_{r_{i-1}+1}\geq n\geq m\geq 1}\|\sum_{j=m}^n y_i^*(x_j) f_j\|<\delta_i$
and
 $\sup_{n\geq m\geq p_{r_{i}}}\|\sum_{j=m}^n y_i^*(x_j) f_j\|<\delta_i$ for all $i\in\N$
 then $(K_{r_{i-1}},y^*_i)\in\overline{A}_\vp$.
\item For all $\vp>0$, $S$ has a winning strategy for the $(A,\vp)$-game.
\item For all $\vp>0$ every normalized $w^*$-null even tree in $X^*$ has a branch in $\overline{A}_\vp$.
\end{enumerate}
\end{prop}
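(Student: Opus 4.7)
The plan is to prove the circular chain $(1)\Rightarrow(2)\Rightarrow(3)\Rightarrow(1)$, paralleling the argument given for FDDs as Proposition~2.6 in~\cite{FOSZ}, with Theorem~\ref{T:1.4} supplying the frame-theoretic replacement for the FDD stability inputs used there. The first two implications will be direct translations of the hypotheses through the frame and game formalism; the substantive content is concentrated in the final implication $(3)\Rightarrow(1)$.

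For $(1)\Rightarrow(2)$, apply (1) with tolerance $\vp/5$ to obtain $(K_i),(\delta_i),(p_i)$. Define $S$'s strategy by maintaining a sequence $0=r_0<r_1<\cdots$ updated after each round: in round $n$, $S$ plays the integer $k_n=K_{r_{n-1}}$ together with the cofinite $w^*$-closed subspace $Z_n=\mathrm{span}(x_1,\dots,x_{p_{r_{n-1}+1}})^\perp$, which forces $P$'s response $x^*_n$ to have small initial segment $\|\sum_{j=m}^{n'}x^*_n(x_j)f_j\|$ for $n'\leq p_{r_{n-1}+1}$ (using $d(x_n^*,Z_n)<\vp 2^{-n}/5$ and the frame constant). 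After $x^*_n$ is revealed, use that $(f_i,x_i)$ is a boundedly complete frame for $X^*$ by Theorem~\ref{T:CL2.3} to choose $r_n>r_{n-1}$ so that the tail $\sup_{n'\geq m\geq p_{r_n}}\|\sum_{j=m}^{n'}x^*_n(x_j)f_j\|<\delta_n/2$. Provided the $\delta_n$ absorb the frame constant, this makes the hypothesis of (1) applicable, yielding $(K_{r_{n-1}},x_n^*)\in\overline{A}_{\vp/5}\subseteq\overline{A}_{5\vp}$, so $S$ wins.

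For $(2)\Rightarrow(3)$, play $S$'s winning strategy for the $(A,\vp/5)$-game against a given normalized $w^*$-null even tree $(x^*_\alpha)\subset X^*$. Having fixed a partial branch $(n_1,\dots,n_{2n-2})$ and received $S$'s move $(k_n,Z_n)$, pick $n_{2n-1}\geq\max(k_n,n_{2n-2}+1)$; since the node $(x^*_{(n_1,\dots,n_{2n-1},k)})_k$ is $w^*$-null and $Z_n$ is cofinite $w^*$-closed, some $n_{2n}>n_{2n-1}$ ensures $d(x^*_{(n_1,\dots,n_{2n})},Z_n)<\vp 2^{-n}/5$. The branch $(n_{2n-1},x^*_{(n_1,\dots,n_{2n})})_n$ is then a winning play for $S$, hence lies in $\overline{A}_\vp$.

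The remaining direction $(3)\Rightarrow(1)$ is the main obstacle, and it is here that Theorem~\ref{T:1.4} plays its essential role. Argue by contradiction: assume (1) fails for some $\vp>0$ and construct a normalized $w^*$-null even tree in $X^*$ every branch of which avoids $\overline{A}_\vp$, contradicting (3). Inductively, given a partial branch, apply Theorem~\ref{T:1.4} with a decay $(\delta_i)$ calibrated to $\vp$ and a tail parameter $(p_i)$ pushed past the already-chosen indices to produce sequences $(q_i),(N_i)$; these in turn furnish concrete parameters to feed back into the assumed failure of (1), which produces counterexample sequences that can be thinned into a $w^*$-null sequence of admissible normalized functionals, each of which extends the partial branch to one still outside $\overline{A}_\vp$. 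The point of invoking Theorem~\ref{T:1.4} here is that every would-be branch $y^*\in S_{X^*}$ of the resulting tree decomposes as $y^*=\sum y^*_\ell$ with the pieces obeying the skipping hypothesis of (1), so any branch landing inside $\overline{A}_\vp$ would itself constitute a counterexample forbidden by our standing assumption. The main technical burden is the simultaneous bookkeeping at each level: guaranteeing $w^*$-nullness and normalization of the new node, interleaving the tree indexing with the $(p_i)$-sequence, preserving $\overline{A}_\vp$-avoidance of every extension of the partial branch, and choosing the parameters from Theorem~\ref{T:1.4} consistently across all levels of the construction.
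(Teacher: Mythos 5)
The serious problem is your implication $(3)\Rightarrow(1)$, which is where you have placed all the substance but supplied no actual mechanism. The negation of $(1)$ only gives you, for \emph{each} choice of parameters $(K_i),(\delta_i),(p_i)$, a \emph{single} bad pair $((y^*_i),(r_i))$; to contradict $(3)$ you must assemble, from these isolated witnesses, an entire even tree whose nodes are $w^*$-null and \emph{every} branch of which avoids $\overline{A}_\vp$. That passage from ``one bad sequence per parameter choice'' to ``a tree all of whose branches are bad'' is precisely the determinacy content of the equivalence $(2)\Leftrightarrow(3)$, which the paper does not reprove but imports from \cite{FOSZ}; your sketch acknowledges the ``bookkeeping'' but offers no construction, and in particular no reason the new nodes can be taken $w^*$-null. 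The paper avoids this entirely: it cites $(2)\Leftrightarrow(3)$, proves $(1)\Rightarrow(3)$ directly (given a $w^*$-null tree, choose a branch satisfying the hypotheses of $(1)$ by alternately using $w^*$-nullness of nodes for the initial-segment condition and the frame expansion for the tail condition), and proves $(2)\Rightarrow(1)$ from the winning strategy using Lemma \ref{L:1} and finite $\frac{1}{20}\vp 2^{-i}$-nets. Moreover, Theorem \ref{T:1.4} cannot do the job you assign it: it decomposes a \emph{single} functional $y^*$ into pieces $y^*_\ell$ relative to an associated basis, whereas a branch of an even tree is a sequence of distinct normalized functionals, not a decomposition of one; in the paper Theorem \ref{T:1.4} is a \emph{consumer} of condition $(1)$ (through Corollary \ref{C:blocking} inside Theorem \ref{T:1.5}), not an ingredient of this proposition, and it is not used in the paper's proof here at all.

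There is also a smaller, repairable gap in your $(1)\Rightarrow(2)$. In the $(A,\vp)$-game $P$ is only constrained by $d(x^*_n,Z_n)<\vp 2^{-n}$ (not $\vp2^{-n}/5$), and this does \emph{not} force $\sup_{p_{r_{n-1}+1}\geq n'\geq m}\|\sum_{j=m}^{n'}x^*_n(x_j)f_j\|<\delta_n$: the bound you get is $\vp2^{-n}$ times a constant depending on $p_{r_{n-1}+1}$, while $\delta_n$ is fixed by $(1)$ in advance and may be far smaller; ``absorbing the frame constant'' into $\delta_n$ is not available to $S$. The correct move is to replace $x^*_n$ by a normalized $y^*_n\in Z_n$ with $\|x^*_n-y^*_n\|<3\vp2^{-n}$, for which the initial-segment condition holds exactly, apply $(1)$ to $(y^*_n)$, and then pass back to $(x^*_n)$ using the slack between $\overline{A}_\vp$ and the target $\overline{A}_{5\vp}$. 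Your $(2)\Rightarrow(3)$ is fine and is the standard half of the cited equivalence.
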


\begin{proof}
The equivalences $(2)\Longleftrightarrow(3)$ are given in \cite{FOSZ}.

We now assume $(1)$ holds, and will prove $(3)$.  Let $\vp>0$ and
let $(x^*_{(n_1,...,n_{2\ell})} )_{(n_1,...,n_{2\ell})\in
T^{even}_\infty}$ be a $w^*$-null even tree in $X^*$.  There exists
$(K_i)_{i=1}^\infty\in[\N]^\omega$ and
$\bar{\delta}=(\delta_i)\subset(0,1)$ with $\delta_i\searrow0$
 and $(p_i)_{i=1}^\infty\in[\N]^\omega$ such that if $(y^*_i)_{i=1}^\infty\subset S_{X^*}$ and $(r_i)_{i=0}^\infty\in[\N]^\omega$ so that
 $\sup_{p_{r_{i-1}+1}\geq n\geq m\geq1}\|\sum_{j=m}^n y_i^*(x_j) f_j\|<\delta_i$
and
 $\sup_{n\geq m\geq p_{r_{i}}}\|\sum_{j=m}^n y_i^*(x_j) f_j\|<\delta_i$ for all $i\in\N$
 then $(K_{r_{i-1}},y^*_i)\in\overline{A}_\vp$.

We shall construct by induction sequences
$(r_i)_{i=0}^\infty,(n_i)_{i=1}^\infty\in[\N]^\omega$ such that
$K_{r_i}=n_{2i+1}$  and
 $\sup_{p_{r_{i-1}+1}\geq n\geq m\geq 1}\|\sum_{j=m}^n x_{(n_1,...,n_{2i})}^*(x_j) f_j\|<\delta_i$
and
 $\sup_{n\geq m\geq p_{r_{i}}}\|\sum_{j=m}^n y_i^*(x_j) f_j\|<\delta_i$ for all $i\in\N$.  To start, we let $r_0=1$ and $n_1=K_{1}$.  Now, if $\ell\in\N$ and $(r_i)_{i=0}^\ell$ and
$(n_i)_{i=1}^{2\ell+1}$ have been chosen, then using that
$(x^*_{(n_1,...,n_{2\ell+1},k)})_{k=n_{2\ell+1}+1}^\infty$ is
$w^*$-null, we may choose $n_{2\ell+2}>n_{2\ell+1}$ such that
$\|x^*_{(n_1,...,n_{2\ell+1},n_{2\ell+2})}(x_j)f_j\|<(p_{r_{\ell}}+1)^{-1}\delta_{\ell+1}$.
Thus, $\sup_{p_{r_{\ell}+1}\geq n\geq m\geq1}\|\sum_{j=m}^n
x^*_{(n_1,...,n_{2\ell+1},n_{2\ell+2})}(x_j) f_j\|<\delta_{\ell+1}$.
As $(x_j,f_j)_{i=1}^\infty$ is a Schauder frame, we may choose
$r_{\ell+1}>r_\ell$ such that
$$\sup_{n\geq m\geq p_{r_{\ell+1}}}\|\sum_{j=m}^n
x^*_{(n_1,...,n_{2\ell+1},n_{2\ell+2})}(x_j)
f_j\|<\delta_{\ell+1}.$$  We then let $n_{2\ell+2}=K_{r_{\ell+1}}$.
Thus, our sequences $(r_i)_{i=0}^\infty$ and $(n_i)_{i=1}^\infty$
may be constructed by induction to satisfy the desired properties,
giving us that
$(n_{2i-1},x^*_{(n_1,...,n_{2i})})_{i=1}^\infty=(K_{r_{i-1}},x^*_{(n_1,...,n_{2i})})_{i=1}^\infty\in\overline{A}_\vp$.

We now assume $(2)$ holds, and will prove $(1)$.  Let $\vp>0$ and
assume that player $S$ has a winning strategy for the
$(A,\vp)$-game.  That is, there exists an indexed collection
$(k_{(x^*_1,...,x^*_\ell)})_{(x^*_1,...,x^*_\ell)\in X^{<\N}}$ of
natural numbers, and an indexed collection
$(X^*_{(x^*_1,...,x^*_\ell)})_{(x^*_1,...,x^*_\ell)\in X^{<\N}}$ of
co-finite dimensional $w^*$-closed subsets of $X^*$ such that if
$(x^*_i)_{i=1}^\infty\subset S_{X^*}$ and
$d(x_i^*,X^*_{(x^*_1,...,x^*_i)})<\frac{1}{10}\vp2^{-i}$ for all
$i\in\N$ then
$(k_{(x^*_1,...,x^*_i)},X^*_{(x^*_1,...,x^*_i)})_{i=1}^\infty\in\overline{A}_{\vp/2}$
and $(k_{(x^*_1,...,x^*_i)})_{i=1}^\infty\in[\N]^\omega$.

We  construct by induction $(K_i)_{i=1}^\infty\in[\N]^\omega$,
$(p_i)_{i=1}^\infty\in[\N]^\omega$,
$(\delta_i)_{i=1}^\infty\in(0,1)^\omega$ and a nested collection
$(D_i)_{i=1}^\infty\subset [X^{<\omega}]^{\omega}$ such that $D_i$
is $\frac{1}{20}\vp2^{-i}$-dense in $[f_j]_{j=1}^{p_i}$ and if
$(y^*_i)_{i=1}^\infty\subset S_{X^*}$ and
$(r_i)_{i=0}^\infty\in[\N]^\omega$ so that
 $\sup_{p_{r_{i-1}+1}\geq n\geq m\geq 1}\|\sum_{j=m}^n y_i^*(x_j)
 f_j\|<\delta_i$ and $\sup_{n\geq m\geq p_{r_{i}}}\|\sum_{j=m}^n y_i^*(x_j) f_j\|<\delta_i$ for all $i\in\N$,
 and $x^*_i\in D_{r_{i+1}}$ such that $\|y^*_i-x^*_i\|<\frac{1}{20}\vp2^{-i}$ for all $i\in\N$, then $K_{r_{i-1}}\geq k_{(x^*_1,...,x^*_{i-1})}$, and $d(x_i^*,X^*_{(x^*_1,...,x^*_i)})<\frac{1}{10}\vp2^{-i}$.
This would give that
$(k_{(x^*_1,...,x^*_i)},X^*_{(x^*_1,...,x^*_i)})_{i=1}^\infty\in\overline{A}_{\vp/2}$.
Hence, $(K_{r_{i-1}},y^*_i)_{i=1}^\infty\in\overline{A}_{\vp}$ as
$K_{r_{i-1}}\geq k_{(x^*_1,...,x^*_{i-1})}$ and $\|y^*_i-
x^*_i\|<\frac{1}{20}\vp2^{-i}$ for all $i\in\N$.  Thus all that
remains is to show that $(K_i)_{i=1}^\infty\in[\N]^\omega$,
$(p_i)_{i=1}^\infty\in[\N]^\omega$, and $(D_i)_{i=1}^\infty\subset
[X^{<\omega}]^{\omega}$ may be constructed inductively with the
desired properties.

We start by choosing $K_1=k_\emptyset$. As $(x_i,f_i)_{i=1}^\infty$
is a shrinking Schauder frame for $X$ and $X^*_{\emptyset}\subset
X^*$ is co-finite dimensional and $w^*$-closed, by Lemma \ref{L:1}
there exists $p_1\in\N$ and $\delta_1>0$ such that if $\sup_{
p_{1}\geq n\geq m\geq 1}\|\sum_{j=m}^n y^*(x_j) f_j\|<\delta_1$ for
some $y^*\in S_{X^*}$ then $d(y^*,X^*_{\emptyset})<\frac{1}{20}\vp$.
We then let $D_1$ be some finite $\frac{1}{20}\vp$-net in
$[f_i]_{i=1}^{p_1}$. Now we assume $n\in\N$ and that
$(K_i)_{i=1}^n\in[\N]^{<\omega}$, $(p_i)_{i=1}^n\in[\N]^{<\omega}$,
$(\delta_i)_{i=1}^n\in(0,1)^{<\omega}$ and $(D_i)_{i=1}^n\subset
[X^{<\omega}]^{<\omega}$ have been suitably chosen. As
$(x_i,f_i)_{i=1}^\infty$ is a shrinking Schauder frame for $X$ and
$X^*_{(x^*_1,...,x^*_\ell)}\subset X^*$ is co-finite dimensional and
$w^*$-closed for all $(x^*_1,...,x^*_\ell)\in [D_n]^{<\omega}$, by
Lemma \ref{L:1} there exists $p_{n+1}\in\N$ and $\delta_{n+1}>0$
such that if $\sup_{p_{n+1}\geq n\geq m\geq 1}\|\sum_{j=m}^n
y^*(x_j) f_j\|<\delta_{n+1}$ for some $y^*\in S_{X^*}$ then
$d(y^*,\cap_{(x^*_1,...,x^*_\ell)\in [D_n]^{<\omega}}
X^*_{(x^*_1,...,x^*_\ell)})<\frac{1}{20}\vp2^{-n-1}$. We then let
$K_{n+1}=\max_{(x^*_1,...,x^*_\ell)\in
[D_n]^{<\omega}}k_{(x^*_1,...,x^*_\ell)}$ and let $D_{n+1}$ be a
finite $\frac{1}{20}\vp2^{-n-1}$-net in $[f_i]_{i=1}^{p_{n+1}}$.
\end{proof}

\section{Upper and lower estimates}\label{S:3}

Let $X$ be a Banach space, $V = (v_i)_{i=1}^\infty$ be a normalized
$1$-unconditional basis, and $1 \le C < \infty$. We say that $X$
satisfies \emph{subsequential $C$-$V$-upper tree estimates} if every
weakly null even tree $(x_\alpha)_{\alpha\in
T_\infty^{\mathrm{even}}}$ in $X$ has a branch
$(n_{2\ell-1},x_{(n_1,...,n_{2\ell})})^\infty_{\ell=1}$ such that
$(x_{(n_1,...,n_{2\ell})})^\infty_{\ell=1}$ is $C$-dominated by
$(v_{n_{2\ell-1}})_{\ell=1}^\infty.$ We say that X satisfies
\emph{subsequential $V$-upper tree estimates} if it satisfies
subsequential $C$-$V$-upper tree estimates for some $1 \le C <
\infty$. If X is a subspace of a dual space, we say that X satisfies
\emph{subsequential $C$-$V$-lower $w^*$ tree estimates} if every
$w^*$-null even tree $(x_\alpha)_{\alpha\in
T_\infty^{\mathrm{even}}}$ in $X$ has a branch
$(n_{2\ell-1},x_{(n_1,...,n_{2\ell})})^\infty_{\ell=1}$ such that
$(x_{(n_1,...,n_{2\ell})})^\infty_{\ell=1}$ $C$-dominates
$(v_{n_{2\ell-1}})_{\ell=1}^\infty.$

A basic sequence $V = (v_i)_{i=1}^\infty$ is called $C$-{\em right
dominant} if for all sequences $m_1 < m_2 < \cdot \cdot \cdot$ and
$n_1 < n_2 < \cdot \cdot \cdot$ of positive integers with $m_i \le
n_i$ for all $i \in \N$ the sequence $(v_{m_i})_{i=1}^\infty$ is
$C$-dominated by $(v_{n_i})_{i=1}^\infty$. We say that
$(v_i)_{i=1}^\infty$ is {\em right dominant} if for some $C \ge 1$
it is $C$-right dominant.

\begin{lem}\cite[Lemma 2.7]{FOSZ}\label{L:FOSZ2.7}
Let $X$ be a Banach space with separable dual, and let $V = (v_i)_{i=1}^\infty$ be a normalized,
1-unconditional, right dominant basic sequence. If $X$ satisfies subsequential $V$-upper tree estimates,
then $X^*$ satisfies subsequential $V^*$-lower $w^*$ tree estimates.
\end{lem}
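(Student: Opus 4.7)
The plan is a duality and tree-transfer argument: I would convert the given $w^*$-null even tree in $X^*$ into an auxiliary weakly null even tree in $B_X$ that is approximately biorthogonal to it, apply the subsequential $V$-upper tree estimate in $X$ to extract a dominated branch, and then use the $1$-unconditional duality between $V$ and $V^*$ to convert the resulting upper estimate in $X$ into the desired lower $w^*$ tree estimate in $X^*$. The right-dominance of $V$ is used to transfer estimates across the index-sparsifications that occur during the tree pruning, and the separability of $X^*$ is used (via Rosenthal's theorem) to manufacture weak nullness.

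Step 1 (auxiliary tree). Let $(x^*_\alpha)_{\alpha\in T_\infty^{\mathrm{even}}}$ be a normalized $w^*$-null even tree in $X^*$. By induction on $|\alpha|$ and after pruning the index tree, construct $(y_\alpha)\subset B_X$ so that along every branch $(\alpha_\ell)_\ell$,
\[
x^*_{\alpha_\ell}(y_{\alpha_\ell})\ge 1-\vp_\ell\qquad\text{and}\qquad |x^*_{\alpha_\ell}(y_{\alpha_j})|+|x^*_{\alpha_j}(y_{\alpha_\ell})|<\vp_\ell 2^{-|j-\ell|}\ \ (j\ne\ell),
\]
with $\vp_\ell\searrow 0$ rapidly. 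The off-diagonal bound $|x^*_{\alpha_\ell}(y_{\alpha_j})|$ for $j<\ell$ is arranged by shrinking the node at $\alpha_\ell$ using its $w^*$-nullness (as the node-index grows, the functional pairs arbitrarily small with any finite list of earlier $y_{\alpha_j}$). The bound $|x^*_{\alpha_j}(y_{\alpha_\ell})|$ for $j<\ell$ is arranged by choosing $y_{\alpha_\ell}$ via Hahn-Banach inside the finite-codimensional subspace annihilating the ancestor functionals; to keep the norm-loss under control, I would first prune the $x^*$-tree (again using $w^*$-nullness) so that $x^*_{\alpha_\ell}$ sits at a definite distance from $\operatorname{span}\{x^*_{\alpha_j}:j<\ell\}$. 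Finally, weak nullness of $(y_\alpha)$: since $X^*$ is separable, $X\not\supseteq\ell_1$, so Rosenthal's theorem yields weakly Cauchy subsequences in every node, and a convex-block/Mazur passage (tuned carefully so as not to annihilate the diagonal values $x^*_{\alpha_\ell}(y_{\alpha_\ell})$) converts these into weakly null nodes while preserving the biorthogonality up to a harmless constant.

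Steps 2--3 (apply and dualize). By hypothesis, the weakly null tree $(y_\alpha)$ has a branch $(y_{(n_1,\dots,n_{2\ell})})_\ell$ that is $C$-dominated by $(v_{n_{2\ell-1}})_\ell$. For any scalars $(a_\ell)$ and $\eta>0$, choose $(b_\ell)$ via $1$-unconditional duality of $V$ and $V^*$ with $\|\sum b_\ell v_{n_{2\ell-1}}\|_V\le 1$ and $|\sum a_\ell b_\ell|\ge(1-\eta)\|\sum a_\ell v^*_{n_{2\ell-1}}\|_{V^*}$. Setting $w=\sum_\ell b_\ell y_{(n_1,\dots,n_{2\ell})}$ gives $\|w\|_X\le C$, and
\[
C\,\Bigl\|\sum_\ell a_\ell x^*_{(n_1,\dots,n_{2\ell})}\Bigr\|_{X^*}\ge \Bigl|\sum_{\ell,j}a_\ell b_j\,x^*_{(n_1,\dots,n_{2\ell})}(y_{(n_1,\dots,n_{2j})})\Bigr|.
\]
The diagonal contributes $(1-o(1))\sum a_\ell b_\ell$ and the off-diagonal error is controlled by the biorthogonality from Step 1; taking the supremum over $(b_\ell)$ yields the subsequential $V^*$-lower $w^*$ tree estimate with constant arbitrarily close to $C$.

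Main obstacle. The delicate point is Step 1: simultaneously securing $x^*_\alpha(y_\alpha)\approx 1$, two-sided approximate biorthogonality along branches, and weak nullness of the tree. The tension is that the standard device for producing weakly null sequences from weakly Cauchy ones --- taking differences --- would annihilate the near-attainment $x^*_\alpha(y_\alpha)\approx 1$. The resolution is to perform all the $x^*$-tree pruning first (to uniformly lower-bound the Hahn-Banach step), then at each node use a convex-block argument tailored to preserve the diagonal values while killing the weak limit; verifying that these two demands can be met simultaneously, with all error parameters summable, is the technical heart of the proof. Once this combinatorial bookkeeping is in place, Steps 2 and 3 are essentially a routine application of the upper-tree-estimate hypothesis and classical $V$-$V^*$ duality.
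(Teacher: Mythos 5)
The paper itself offers no proof of this lemma; it is quoted verbatim from \cite[Lemma 2.7]{FOSZ}, so the comparison is with the argument there. Your overall strategy is the right one and is the standard one: build an auxiliary, approximately biorthogonal, weakly null even tree in $B_X$, apply the subsequential $V$-upper tree estimate to it, and dualize using $1$-unconditionality, with right dominance of $V$ absorbing the index shifts caused by pruning. Your Steps 2--3 are essentially correct as written (the off-diagonal error is controlled since $|a_\ell|,|b_j|\le 1$ after normalization).

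The genuine gap is in Step 1, at exactly the point you flag, and the resolution you sketch does not work. First, a convex-block/Mazur passage cannot produce weak nullness from a weakly Cauchy node that is not weakly convergent: every convex block subsequence has the same $w^*$-limit $y^{**}\in X^{**}\setminus X$, hence is again non-weakly-null; Mazur's theorem is about norm-closure of convex hulls of weakly convergent sequences and gives nothing here. Second, the device that does produce weak nullness --- consecutive differences within a node --- can annihilate the diagonal: if $y^{**}$ is the $w^*$-limit of the node $(y_{(\beta,k)})_k$, then $x^*_{(\beta,k)}\bigl(y_{(\beta,k)}-y_{(\beta,k')}\bigr)\approx 1-y^{**}(x^*_{(\beta,k)})$, which vanishes when $y^{**}(x^*_{(\beta,k)})\to 1$; this really happens, e.g.\ in $X=c_0$ with the node $x^*_k=e_k$ and the legitimate norming choice $y_k=\sum_{j\le k}e_j$, whose $w^*$-limit is $(1,1,\dots)\in\ell_\infty$. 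The missing idea is to choose $y_{(\beta,k)}$ (via Helly's theorem) to annihilate not only the ancestor functionals but also the \emph{earlier members $x^*_{(\beta,j)}$, $j<k$, of the same node}; this is possible after first pruning each node to be a basic sequence with constant close to $1$ (available because normalized $w^*$-null sequences have such subsequences), which bounds $\mathrm{dist}\bigl(x^*_{(\beta,k)},\mathrm{span}\{x^*_{(\beta,j)}:j<k\}\bigr)$ below. With that extra orthogonality the $w^*$-limit $y^{**}$ of each node automatically annihilates all the functionals of that node, so consecutive differences are weakly null \emph{and} retain diagonal $\approx\tfrac12$. Without it, the construction as you describe it can fail, and this repair is the actual content of the lemma.
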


Let $Z$ be a Banach space with an FDD $(E_i)_{i=1}^\infty$, let
$V=(v_i)_{i=1}^\infty$ be a normalized $1$-unconditional basis, and
let $1\le C<\infty$. We say that $(E_i)_{i=1}^\infty$ satisfies
\emph{subsequential $C$-$V$-upper block estimates} if every
normalized block sequence $(z_i)_{i=1}^\infty$ of
$(E_i)_{i=1}^\infty$ in $Z$ is $C$-dominated by
$(v_{m_i})_{i=1}^\infty$, where $m_i=\min\supp_E(z_i)$ for all
$i\in\N.$ We say that $(E_i)_{i=1}^\infty$ satisfies
\emph{subsequential $C$-$V$-lower block estimates} if every
normalized block sequence $(z_i)_{i=1}^\infty$ of
$(E_i)_{i=1}^\infty$ in $Z$ $C$-dominates $(v_{m_i})_{i=1}^\infty$,
where $m_i=\min\supp_E(z_i)$ for all $i\in\N.$ We say that
$(E_i)_{i=1}^\infty$ satisfies \emph{subsequential $V$-upper (or
lower) block estimates} if it satisfies subsequential $C$-$V$-upper
(or lower) block estimates for some $1\le C<\infty$.

Note that if $(E_i)_{i=1}^\infty$ satisfies subsequential
$C$-$V$-upper block estimates and $(z_i)_{i=1}^\infty$ is a
normalized block sequence with $\max \supp_E(z_{i-1}) < k_i \le \min
\supp_E(z_i)$ for all $i > 1$, then $(z_i)_{i=1}^\infty$ is
$C$-dominated by $(v_{k_i})_{i=1}^\infty$ (and a similar remark
holds for lower estimates).

Subsequential $V^{(*)}$-upper block estimates and subsequential
$V$-lower block estimates are dual properties, as shown in the
following proposition from \cite{OSZ1}.

\begin{prop}\cite[Proposition 2.14]{OSZ1}\label{P:OSZ2.14}
Assume that $Z$ has an FDD $(E_i)_{i=1}^\infty$, and let
$V=(v_i)_{i=1}^\infty$ be a normalized and 1-unconditional basic
sequence. The following statements are equivalent:
\begin{enumerate}
\item[(a)] $(E_i)_{i=1}^\infty$ satisfies subsequential $V$-lower block estimates in $Z$.
\item[(b)] $(E^*_i)_{i=1}^\infty$ satisfies subsequential $V^{(*)}$-upper block estimates in $Z^{(*)}$.
\end{enumerate}
(Here subsequential $V^{(*)}$-upper estimates are with respect to
$(v^*_i)_{i=1}^\infty$, the sequence of biorthogonal functionals to
$(v_i)_{i=1}^\infty$).

Moreover, if $(E_i)_{i=1}^\infty$ is bimonotone in $Z$, then the
equivalence holds true if one replaces, for some $C \geq 1$,
$V$-lower estimates by $C$-$V$-lower estimates in (a) and
$V^{(*)}$-upper estimates by $C$-$V^{(*)}$-upper estimates in (b).
\end{prop}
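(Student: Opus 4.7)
The plan is to prove both implications by a common Hahn--Banach/blocking duality argument. Two ingredients drive everything: (i) for a bimonotone FDD $(E_i)$, the natural projection $P_I$ onto coordinates in an interval $I$ has norm one in $Z$, and so does its adjoint $P_I^*$ acting on $Z^{(*)}$; and (ii) one-unconditionality of $V=(v_i)$ provides the pairing inequality $|\sum a_n b_n| \le \|\sum a_n v_n^*\|_{V^{(*)}}\,\|\sum b_n v_n\|_V$ together with the monotonicity $\|\sum c_n a_n v_n^*\| \le \|\sum a_n v_n^*\|$ whenever $|c_n|\le 1$. I would carry out the quantitative ``moreover'' part bimonotonically; the first (non-quantitative) statement then follows because the existence of lower/upper block estimates with \emph{some} constant survives a bimonotone renorming of $Z$.

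For (a)$\Rightarrow$(b), assume $(E_i)$ satisfies subsequential $C$-$V$-lower block estimates. Take a normalized block $(z_n^*)$ of $(E_i^*)$ with support intervals $I_n$ and $k_n=\min I_n$, and fix scalars $(a_n)$. By Hahn--Banach pick $z\in B_Z$ with $(\sum a_n z_n^*)(z)=\|\sum a_n z_n^*\|$, and set $z_n:=P_{I_n}z$. Then $b_n:=\|z_n\|\le 1$ by bimonotonicity, and $y_n:=z_n/b_n$ is a normalized block of $(E_i)$ with $\min\supp_E(y_n)\ge k_n$; the lower block estimate (combined with the ``ghost-index'' remark stated in the paper) yields $\|\sum b_n v_{k_n}\|\le C\|\sum b_n y_n\|=C\|\sum z_n\|\le C$. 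Writing $c_n:=z_n^*(y_n)$ so $|c_n|\le 1$ and $z_n^*(z)=z_n^*(z_n)=b_n c_n$, the pairing and monotonicity inequalities give
$$
\Big\|\sum a_n z_n^*\Big\| \;=\; \sum a_n b_n c_n \;\le\; \Big\|\sum a_n c_n v_{k_n}^*\Big\|_{V^{(*)}}\Big\|\sum b_n v_{k_n}\Big\|_V \;\le\; C\,\Big\|\sum a_n v_{k_n}^*\Big\|_{V^{(*)}}.
$$

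For (b)$\Rightarrow$(a), assume $(E_i^*)$ satisfies $C$-$V^{(*)}$-upper block estimates, and take a normalized block $(z_n)$ of $(E_i)$ with support intervals $I_n$ and $k_n=\min I_n$. For each $n$ apply Hahn--Banach to pick $\phi_n\in Z^{(*)}$ with $\phi_n(z_n)=1=\|\phi_n\|$, then replace $\phi_n$ by $P_{I_n}^*\phi_n$: by bimonotonicity the norm is preserved and the value on $z_n$ is unchanged, while now $\supp_{E^*}(\phi_n)\subset I_n$, so $(\phi_n)$ is a normalized block of $(E_i^*)$ with $\min\supp\ge k_n$. For any scalars $(b_n)$ the upper block estimate gives $\|\sum b_n\phi_n\|\le C\|\sum b_n v_{k_n}^*\|$, and disjointness of FDD-supports forces $\phi_n(z_m)=\delta_{nm}$, whence $(\sum b_n\phi_n)(\sum a_n z_n)=\sum a_n b_n$. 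Taking the supremum over $b_n$ with $\|\sum b_n v_{k_n}^*\|\le 1$ (which computes $\|\sum a_n v_{k_n}\|_V$) delivers $\|\sum a_n v_{k_n}\|\le C\|\sum a_n z_n\|$.

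The main obstacle here is not any individual inequality but the bookkeeping of FDD-supports under $P_{I_n}$ and $P_{I_n}^*$: one must verify that $z_n^*$ supported in $I_n$ pairs with $z$ and with $P_{I_n}z$ identically, that the $\phi_n$ chosen for the upper-to-lower dualization really lie in the dual FDD coordinates indexed by $I_n$, and that bimonotonicity is invoked consistently so that the constant $C$ is preserved rather than degraded. Once this is set up, the proof reduces on each side to one application of Hahn--Banach followed by the one-unconditional pairing inequality, and the non-quantitative first statement follows by renorming $Z$ to make $(E_i)$ bimonotone, which preserves the existence (though not the numerical value) of $V$-lower and $V^{(*)}$-upper block estimates.
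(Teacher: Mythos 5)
The paper offers no proof of this proposition --- it is imported verbatim from \cite{OSZ1} --- so there is nothing in-paper to compare against; I will assess your argument on its own. Your overall strategy (prove the quantitative, bimonotone version via Hahn--Banach, interval projections, and the $1$-unconditional pairing inequality, then deduce the qualitative statement by a bimonotone renorming) is the right one and is essentially the standard proof. The direction (b)$\Rightarrow$(a) is sound: replacing $\phi_n$ by $P_{I_n}^*\phi_n$ preserves both the norm and the value at $z_n$, forces $\phi_n(z_m)=\delta_{nm}$, and the supremum over $\{(b_n):\|\sum b_n v^*_{k_n}\|\le 1\}$ does compute $\|\sum a_n v_{k_n}\|_V$ because, again by $1$-unconditionality, a norming functional for a vector supported on $\{v_{k_n}\}$ may be taken inside $\mathrm{span}(v^*_{k_n})$.

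There is, however, one step in (a)$\Rightarrow$(b) that fails as written: the inequality $\|\sum_n z_n\|=\|\sum_n P_{I_n}z\|\le\|z\|\le 1$. If $I_n$ is the minimal interval containing $\supp_{E^*}(z_n^*)$, the intervals $I_1,I_2,\dots$ are disjoint but in general \emph{not} contiguous, so $\sum_n P_{I_n}$ is a projection onto a union of separated intervals; bimonotonicity controls only single-interval (and tail) projections, and norm-one control of such gappy projections is essentially suppression unconditionality of the FDD, which is not assumed --- in general their norms grow with the number of intervals. The repair is cheap but must be made: take $I_n:=[\min\supp_{E^*}(z_n^*),\,\min\supp_{E^*}(z_{n+1}^*))$, closing off the last interval at the maximum of the last support. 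These intervals are consecutive, still contain the respective supports, still satisfy $\min I_n=k_n$, and their union is a single interval, so $\sum_n P_{I_n}$ is an interval projection of norm one and $\|\sum_n z_n\|\le 1$ as you need; the ghost-index remark still applies to $y_n=P_{I_n}z/\|P_{I_n}z\|$ since $\max\supp_E(y_{n-1})<k_n\le\min\supp_E(y_n)$. (Two further trivialities: discard the indices with $P_{I_n}z=0$, and justify the existence of the norming vector $z$ either by noting that $\sum a_n z_n^*$ is supported on finitely many $E_i$ and hence attains its norm, or by working with $\varepsilon$-norming vectors and letting $\varepsilon\to 0$.) With that modification both implications hold with the stated constant $C$, and your renorming remark correctly yields the non-quantitative equivalence.
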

Note that by duality, Proposition \ref{P:OSZ2.14} holds if we
interchange the words ``upper" and ``lower".

Let $(x_i,f_i)_{i=1}^\infty$ be a shrinking Schauder frame for a
Banach space $X$, and let $(v_i)$ be a normalized, 1-unconditional,
block stable, 1-right dominant, and shrinking basic sequence.  For
any $C>0$, we may apply Proposition \ref{P:skipping} to the set
$A=\{(n_i,x^*_i)_{i=1}^\infty\in(\N\times S_{X^*})^\omega:\,
(x^*_i)_{i=1}^\infty\, C-\textrm{dominates
}(v^*_{n_i})_{i=1}^\infty\}$ to obtain the following corollary.

\begin{cor}\label{C:blocking}
Let $(x_i,f_i)_{i=1}^\infty$ be a shrinking Schauder frame for a
Banach space $X$, and let $V=(v_i)_{i=1}^\infty$ be a normalized,
1-unconditional, block stable, right dominant, and shrinking basic
sequence.  The following are equivalent.
\begin{enumerate}
\item  There
exists $C>0$, $(K_i)_{i=1}^\infty,(p_i)_{i=1}^\infty\in[\N]^\omega$,
and $\bar{\delta}=(\delta_i)\subset(0,1)$ with $\delta_i\searrow0$
 such that if $(y^*_i)_{i=1}^\infty\subset S_{X^*}$ and $(r_i)_{i=0}^\infty\in[\N]^\omega$ so that
 $\sup_{1\leq n\leq m\leq p_{r_{i-1}+1}}\|\sum_{j=n}^m y_i^*(x_j) f_j\|<\delta_i$
and
 $\sup_{p_{r_{i}}\leq n\leq m}\|\sum_{j=n}^m y_i^*(x_j) f_j\|<\delta_i$
 then $(y^*_i) \succeq_C (v^*_{K_{r_{i-1}}})$.\\

\item $X$ satisfies subsequential $V$ upper tree estimates.
\end{enumerate}

\end{cor}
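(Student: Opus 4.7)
The plan is to apply Proposition \ref{P:skipping} to the set
\[
A_C = \bigl\{(n_i,x_i^*)_{i=1}^\infty \in (\N \times S_{X^*})^\omega : (x_i^*)\text{ $C$-dominates }(v_{n_i}^*)\bigr\}
\]
and to bridge between the $w^*$-tree condition in $X^*$ furnished by Proposition \ref{P:skipping}(3) and the weakly-null-tree condition in $X$ appearing as (2), via Lemma \ref{L:FOSZ2.7} together with its converse.

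For (2) $\Rightarrow$ (1): by Lemma \ref{L:FOSZ2.7}, $X^*$ satisfies subsequential $C$-$V^*$-lower $w^*$ tree estimates for some $C \geq 1$. Every normalized $w^*$-null even tree in $X^*$ then admits a branch lying directly in $A_C \subseteq \overline{(A_C)}_\vp$ for every $\vp>0$, which is Proposition \ref{P:skipping}(3) applied to $A_C$; hence Proposition \ref{P:skipping}(1) holds for $A_C$. Unpacking: any $(y_i^*)$ obeying the skipping hypothesis is within $\vp 2^{-i}$ in norm of some $(x_i^*)$ that $C$-dominates $(v_{k_i}^*)$ with $k_i \leq K_{r_{i-1}}$. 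A standard perturbation-of-basic-sequences argument (with $\vp$ chosen small in terms of $C$) promotes this to $C'$-domination by $(y_i^*)$, and $1$-right dominance of $V$ transfers the domination from $(v_{k_i}^*)$ to $(v_{K_{r_{i-1}}}^*)$, yielding (1) of the Corollary.

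For (1) $\Rightarrow$ (2): the hypothesis places every $(y_i^*)$ meeting the skipping condition directly in $A_C \subseteq \overline{(A_C)}_\vp$, so Proposition \ref{P:skipping}(1) holds for $A_C$, hence so does Proposition \ref{P:skipping}(3). Reversing the perturbation and right-dominance arguments above shows that every branch in $\overline{(A_C)}_\vp$ of a $w^*$-null even tree in $X^*$ $C'$-dominates $(v_{n_{2\ell-1}}^*)$, so $X^*$ satisfies subsequential $V^*$-lower $w^*$ tree estimates. The converse direction of Lemma \ref{L:FOSZ2.7}, available for shrinking, block-stable, right-dominant $V$ via the duality techniques of \cite{OSZ1}, then delivers subsequential $V$-upper tree estimates on $X$.

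The main obstacle is quantitative: absorbing the perturbation parameter $\vp$ inherent in $\overline{(A_C)}_\vp$ back into a clean domination constant. Block stability of $V$ is what makes this possible, allowing small coordinate-wise perturbations to be traded for a uniform constant adjustment. A second point is justifying the reverse direction of Lemma \ref{L:FOSZ2.7}; the cleanest route is by contrapositive, extracting from a putative failure of $V$-upper tree estimates in $X$, via $w^*$-compactness and a standard diagonal argument, a $w^*$-null even tree in $X^*$ violating $V^*$-lower $w^*$ tree estimates.
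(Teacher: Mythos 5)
Your proposal follows the route the paper itself intends: the paper's entire ``proof'' of Corollary \ref{C:blocking} is the sentence preceding it, namely that one applies Proposition \ref{P:skipping} to the set $A=\{(n_i,x^*_i):\ (x^*_i)\ C\text{-dominates}\ (v^*_{n_i})\}$, and your translation between condition (1) of the Corollary and condition (1) of the Proposition --- absorbing the $\vp$-perturbation in $\overline{A}_\vp$ using $1$-unconditionality of $(v_i^*)$ (so that $\sum|a_i|\vp 2^{-i}\le\vp\max_i|a_i|\le\vp\|\sum a_iv^*_{k_i}\|$) and then transferring the domination from $(v^*_{k_i})$ to $(v^*_{K_{r_{i-1}}})$ by right dominance --- is exactly the bookkeeping the paper leaves implicit. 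The direction $(2)\Rightarrow(1)$, which is the only direction the paper actually uses downstream (in the proof of Theorem \ref{T:1.5}), is therefore complete as you have written it.

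The weak point is $(1)\Rightarrow(2)$. You correctly identify that it needs the converse of Lemma \ref{L:FOSZ2.7} (lower $w^*$ tree estimates in $X^*$ imply upper tree estimates in $X$), which the paper does not state; but your proposed justification --- a contrapositive via $w^*$-compactness and a ``standard diagonal argument'' --- underestimates this step. Lower estimates on functionals do not dualize pointwise to upper estimates on vectors: if a branch $(y^*_\ell)$ in $X^*$ is essentially biorthogonal to a branch $(x_\ell)$ in $X$ and $C$-dominates $(v^*_{n_\ell})$, this produces only a lower bound for $\|\sum a_\ell x_\ell\|$, never the required upper bound $\|\sum a_\ell x_\ell\|\lesssim\|\sum a_\ell v_{n_\ell}\|$, so a putative failure of upper tree estimates in $X$ cannot be converted into a failing $w^*$-null tree in $X^*$ by soft perturbation and diagonalization alone. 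The known proof of this converse is not soft: it runs through the embedding machinery of \cite{FOSZ} and \cite{OSZ1} (build an associated/containing space with an FDD satisfying subsequential $V$-upper block estimates, which then trivially forces $V$-upper tree estimates) --- in other words, essentially through Theorems \ref{T:1.5} and \ref{T:1.6} of this paper. So either cite that equivalence from \cite{FOSZ} outright, or present $(1)\Rightarrow(2)$ as an a posteriori consequence of the construction of the associated space rather than as a direct duality argument.
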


Let $Z$ be a Banach space with a basis $(z_i)_{i=1}^\infty$, let
$(p_i)_{i=1}^\infty\in[\N]^\omega$, and let $V = (v_i)_{i=1}^\infty$
be a normalized 1-unconditional basic sequence. The space $Z_V(p_i)$
is defined to be the completion of $\coo$ with respect to the
following norm $\|\cdot\|_{Z_V} :$

 $$\|\sum a_i z_i\|_{Z_V}=\max_{M\in\N, 1\leq r_0\leq r_1<\cdots<{r_M}}
 \left\Vert\sum_{i=1}^M\left\|\sum_{j=p_{r_i}}^{p_{{r_{i+1}}-1}}a_j z_j\right\|_{Z}v_{{r_i}}\right\Vert_{V}\qquad \textrm{ for }(a_i)\in\coo.
$$

The following proposition from \cite{OSZ1} is what makes the space
$Z_V$ essential for us. Recall that in \cite{OSZ1} a basic sequence,
$(v_i)_{i=1}^\infty$, is called {\em $C$-block stable} for some $C
\geq 1$ if any two normalized block bases $(x_i)_{i=1}^\infty$ and
$(y_i)_{i=1}^\infty$ with $\max(supp(x_i) , supp(y_i)) <
min(supp(x_{i+1}) , supp(y_{i+1}))$ for all $i \in\N$ are
$C$-equivalent. We say that $(v_i)_{i=1}^\infty$ is {\em block
stable} if it is $C$-block stable for some constant $C$.  We will
make use of the fact that the property of block stability dualizes.
That is, if $(v_i)_{i=1}^\infty$ is a block stable basic sequence
 then $(v^*_i)_{i=1}^\infty$ is also a block stable basic sequence.
Another simple, though important, consequence of a normalized basic
sequence
 $(v_i)_{i=1}^\infty$ being block stable,
is that there exists a constant $c\geq 1$ such that
 $(v_{n_i})_{i=1}^\infty$ is $c$-equivalent to $(v_{n_{i+1}})_{i=1}^\infty$
 for all $(n_i)_{i=1}^\infty\in[\N]^\omega$.
Block stability has been considered before in various forms
and under different names. In particular, it has been called the blocking
principle \cite{CJT} and the shift property \cite{CK} (see \cite{FR} for alternative forms).
The following proposition recalls some properties of $Z_V(p_i)$ which were
shown in \cite{OSZ1}.

\begin{prop}\cite[Corollary 3.2, Lemmas 3.3, 3.5, and 3.6]{OSZ1}\label{L:OSZ3.2}
Let $V =(v_i)_{i=1}^\infty$ be a normalized, 1-unconditional, and
C-block stable basic sequence. If $Z$ is a Banach space with a basis
$(z_i)_{i=1}^\infty$ and $(p_i)_{i=1}^\infty\in[\N]^\omega$, then
$(z_i)_{i=1}^\infty$ satisfies $2C$-$V$-lower block estimates in
$Z_V(p_i)$. If the basis $(v_i)_{i=1}^\infty$ is boundedly complete
then $(z_i)_{i=1}^\infty$ is a boundedly complete basis for $Z_V
(p_i)$. If the basis $(v_i)_{i=1}^\infty$ is shrinking and
$(z_i)_{i=1}^\infty$ is shrinking in $Z$, then $(z_i)_{i=1}^\infty$
is a shrinking basis for $Z_V (p_i)$.

If $U=(u_i)_{i=1}^\infty$  is a normalized, 1-unconditional and
block-stable basic sequence such that $(v_i)_{i=1}^\infty$ is
dominated by $(u_i)_{i=1}^\infty$ and $(z_i)_{i=1}^\infty$ satisfies
subsequential $U$-upper block estimates in $Z$, then
$(z_i)_{i=1}^\infty$ also satisfies subsequential $U$-upper block
estimates in $Z_V(p_i)$.
\end{prop}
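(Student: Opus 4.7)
The plan is to prove the four claims (subsequential $2C$-$V$-lower block estimates; boundedly complete basis; shrinking basis; preservation of subsequential $U$-upper block estimates) in turn by unpacking the definition of $\|\cdot\|_{Z_V(p_i)}$, which takes a $V$-norm of $Z$-norms on block segments aligned with the partition $(p_i)$. The central feature I would exploit is that choosing an ``optimal'' partition in the supremum defining this norm realizes $Z_V(p_i)$ as a nested combination of $V$ on the outside and $Z$ on the inside, with $C$-block stability of $V$ reconciling the partition indices with actual supports.

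For the $2C$-$V$-lower block estimate, I would take a normalized block $(y_k)$ of $(z_i)$ in $Z_V(p_i)$ with $m_k=\min\supp_{(z_i)}(y_k)$ and, for scalars $(a_k)$, lower bound $\|\sum a_k y_k\|_{Z_V}$ by choosing the admissible partition so that each $a_k y_k$ sits inside its own interval $[p_{r_k},p_{r_{k+1}})$. This forces $\|\sum a_k y_k\|_{Z_V}\geq \|\sum |a_k|\,\|y_k\|_Z\, v_{r_k}\|_V=\|\sum |a_k| v_{r_k}\|_V$, after which $C$-block stability of $V$ lets me pass to $(v_{m_k})$, with the extra factor of $2$ absorbing any shift between $r_k$ and $m_k$. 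The $U$-upper block estimate claim runs the same idea in the opposite direction: within each interval of a fixed admissible partition I would apply the subsequential $U$-upper block estimate for $(z_i)$ in $Z$ to bound the inner $Z$-norm by a $U$-norm of coefficients, then invoke $V\preceq U$ together with the $1$-unconditionality of $U$ to absorb the outer $V$-norm into a single $U$-norm, and finally use block stability of $U$ to shift indices to $\min\supp_{(z_i)}(y_k)$.

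For bounded completeness, if the partial sums of $\sum a_i z_i$ are bounded in $Z_V(p_i)$, then for every admissible partition the $V$-combinations $\sum_i \|\sum_{j\in[p_{r_i},p_{r_{i+1}})} a_j z_j\|_Z\, v_{r_i}$ are bounded in $V$; bounded completeness of $V$ forces the tail block-$Z$-norms to vanish, from which the partial sums in $Z_V(p_i)$ are readily seen to be Cauchy. The shrinking claim is where I anticipate the main obstacle, since it requires combining shrinkingness of $(z_i)$ in $Z$ (controlling behavior inside each partition interval) with shrinkingness of $V$ (controlling behavior across intervals). My plan is to test against an arbitrary $z^*\in Z_V(p_i)^*$ and decompose its action along partition intervals: the finitely-supported part is handled via $(z_i)$ shrinking in $Z$, which gives $Z^*=[z_i^*]$ on any finite union of intervals, while the tail contribution is controlled using the lower estimate from the first claim together with the shrinkingness of $V$, since then for any bounded block sequence in $Z_V(p_i)$ the induced $V$-profile is weakly null and hence the tail functional values are arbitrarily small.
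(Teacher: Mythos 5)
You should first be aware that the paper does not prove this proposition at all: it is imported verbatim from \cite{OSZ1} (Corollary 3.2 together with Lemmas 3.3, 3.5 and 3.6 there), so there is no internal proof to compare against and your sketch has to stand on its own. It has the right overall shape --- realize $\|\cdot\|_{Z_V}$ by a well-chosen admissible partition and use block stability to reconcile partition indices with supports --- but there are concrete gaps. The most serious is in the lower-estimate step: your chain $\|\sum a_k y_k\|_{Z_V}\geq \|\sum |a_k|\,\|y_k\|_Z\, v_{r_k}\|_V=\|\sum |a_k| v_{r_k}\|_V$ uses $\|y_k\|_Z=1$, but the blocks are normalized in $Z_V(p_i)$, and in general $\|y_k\|_Z\leq\|y_k\|_{Z_V}=1$ can be arbitrarily small, since the $Z_V$-norm of $y_k$ may be attained by splitting its support into many $p$-intervals rather than by the single inner $Z$-norm. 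The argument that works takes, for each $k$, an admissible partition nearly attaining $\|y_k\|_{Z_V}=1$, concatenates these partitions, and then uses $C$-block stability of $V$ to replace each resulting normalized block $w_k=\sum_i\|P_{I_i^k}y_k\|_Z\,v_{s_i^k}$ of $V$ by the single vector $v_{m_k}$; without that step the estimate does not follow. Relatedly, a block sequence of $(z_i)$ need not respect the grid $(p_i)$, so consecutive $y_k$'s can meet the same interval $[p_r,p_{r+1})$ and cannot each be given ``its own interval''; handling the shared boundary intervals (and the near-attainment of the norm) is where the extra factor in $2C$ comes from, and is also why the estimate is really used in the paper for the blocked FDD $(\mathrm{span}_{j\in[p_i,p_{i+1})}z_j)$ rather than for $(z_i)$ coordinatewise.

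The bounded-completeness step has a similar gap: from boundedness of the partial sums, bounded completeness of $V$ applied to the finest partition gives $\|\sum_{i=N}^{M}\|P_{[p_i,p_{i+1})}x\|_Z v_i\|_V\rightarrow0$, but the $Z_V$-norm of a tail of $x$ is a supremum over \emph{all} admissible partitions, including ones that merge many tail intervals into one block; passing from the finest partition to an arbitrary one is not a formal triangle-inequality manipulation for a general $1$-unconditional $V$ (the grouping inequality you would need fails without block stability), so this requires another explicit appeal to $C$-block stability that your sketch omits. Finally, for the shrinking claim and the preservation of $U$-upper estimates you describe only a strategy: collapsing the nested $V$-of-$U$ expression into a single $U$-norm (using $V\preceq U$ and block stability of $U$), and splitting a functional on $Z_V(p_i)$ into a finitely supported part controlled via shrinkingness of $(z_i)$ in $Z$ plus a tail controlled via the lower estimate and shrinkingness of $V$, are precisely the technical content of Lemmas 3.5 and 3.6 of \cite{OSZ1} and would need to be carried out in detail before the proposal could be accepted as a proof.
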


\begin{thm}\label{T:1.5}
Let $X$ be a Banach space with a shrinking Schauder frame $(x_i,f_i)$ which is strongly shrinking relative to some Banach space
$Z$ with basis $(z_i)$ and bounded operators $T:X\rightarrow Z$ and $S:Z\rightarrow X$
 defined by $T(x)=\sum f_i(x)z_i$ for all $x\in X$ and $S(z)=\sum z^*_i(z) x_i$ for all $z\in Z$. Let $V=(v_i)_{i=1}^\infty$ be a normalized, 1-unconditional, block stable, right dominant, and shrinking basic sequence.
If $X$ satisfies subsequential $V$ upper tree estimates, then there
exists $(n_i)_{i=1}^\infty,(K_i)_{i=1}^\infty\in[\N]^\omega$ such
that $Z^*_{(v^*_{K_i})}(n_i)$ is an associated space of
$(f_i,x_i)_{i=1}^\infty$ with bounded operators $S^*:X^*\rightarrow
Z^*_{(v^*_{K_i})}(n_i)$ and
 $T^*:Z^*_{(v^*_{K_i})}(n_i)\rightarrow X$
 given by $S^*(x^*)=\sum x^*(x_i)z^*_i$ for all $x^*\in X^*$ and $S^*(z^*)=\sum z^*(z_i) f_i$ for all $z^*\in Z^*_{(v^*_{K_i})}(n_i)$.
\end{thm}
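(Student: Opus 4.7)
The plan is to construct the associated space explicitly using the data from the preceding tools. Since $X$ satisfies subsequential $V$-upper tree estimates, Corollary \ref{C:blocking} yields a constant $C_0\geq 1$, sequences $(K_i),(p_i)\in[\N]^\omega$ and $\bar\delta=(\delta_i)\subset(0,1)$ with $\delta_i\searrow 0$ such that every sequence $(y_i^*)\subset S_{X^*}$ which is skip-supported in the precise sense of that corollary $C_0$-dominates $(v^*_{K_{r_{i-1}}})$. Feeding these $(p_i)$ and $(\delta_i)$ (after possibly decreasing the $\delta_i$'s further so that $\sum\delta_i<\infty$) into Theorem \ref{T:1.4} produces $(q_i),(N_i)\in[\N]^\omega$ with the decomposition properties (a)--(c). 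Set $n_i=p_{q_{N_i}}$; the candidate associated space is $Z^*_{(v^*_{K_i})}(n_i)$. Boundedness of $T^*$ is immediate: taking $M=1$ in the definition of $\|\cdot\|_{Z^*_{(v^*_{K_i})}(n_i)}$ shows $\|\zeta\|_{Z^*_{(v^*_{K_i})}(n_i)}\geq\|\zeta\|_{Z^*}$ for every $\zeta\in[z_i^*]$, and Theorem \ref{T:CL1.8} already provides a bounded $T^*:Z^*\to X^*$ (using the strong shrinking hypothesis), hence the operator extends continuously to a bounded map from $Z^*_{(v^*_{K_i})}(n_i)$ into $X^*$.

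The core is the boundedness of $S^*:X^*\to Z^*_{(v^*_{K_i})}(n_i)$. Fix $y^*\in S_{X^*}$ and any admissible partition $(k_i)_{i=0}^\infty\in[\N]^\omega$ in the outer norm of $Z^*_{(v^*_{K_i})}(n_i)$. Apply Theorem \ref{T:1.4} to $(k_i)$ and $y^*$ to produce a decomposition $y^*=\sum y^*_\ell$ together with $t_\ell\in(N_{k_\ell},N_{k_{\ell+1}})$ satisfying (a), (b), (c). By (c), with $n_i=p_{q_{N_i}}$, and $\|S^*\|\leq\|S\|$,
$$\|P_{[n_{k_\ell},n_{k_{\ell+1}})}S^*(y^*)\|_{Z^*}\leq\|S\|\bigl(\|y^*_{\ell-1}\|+\|y^*_\ell\|+\|y^*_{\ell+1}\|\bigr)+\delta_\ell.$$
The triangle inequality and $1$-unconditionality of $V^*$ reduce the target estimate to bounding, for each $j\in\{-1,0,1\}$, the quantity $\bigl\|\sum_\ell\|y^*_{\ell+j}\|v^*_{K_{k_\ell}}\bigr\|_{V^*}$, plus a harmless error $\bigl\|\sum_\ell\delta_\ell v^*_{K_{k_\ell}}\bigr\|_{V^*}\leq\sum_\ell\delta_\ell<\infty$. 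Split each of the three sums by parity of $\ell$; within each parity class, property (b) ensures that the normalized vectors $y^*_\ell/\|y^*_\ell\|$ (for $\ell$ with $\|y^*_\ell\|\geq\delta_\ell$) are skip-supported in the sense of Corollary \ref{C:blocking} with partition $r_\ell=q_{t_\ell}$, the parity skip providing the gap between consecutive essential supports $[p_{q_{t_{\ell-1}}},p_{q_{t_\ell}}]$. Apply Corollary \ref{C:blocking} to obtain a $C_0$-domination by $(v^*_{K_{q_{t_{\ell-1}}}})$. Left dominance of $V^*$ (dual to right dominance of $V$) and block stability then replace these indices by the target indices $K_{k_\ell}$ at the cost of a universal constant, yielding $\bigl\|\sum_\ell\|y^*_{\ell+j}\|v^*_{K_{k_\ell}}\bigr\|_{V^*}\leq C_1\|y^*\|$. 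Taking the supremum over admissible partitions delivers $\|S^*(y^*)\|_{Z^*_{(v^*_{K_i})}(n_i)}\leq C\|y^*\|$.

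The main obstacle is this last index alignment. Corollary \ref{C:blocking} delivers a domination involving $(v^*_{K_{q_{t_{\ell-1}}}})$, where the indices arise from the support structure of the decomposition, whereas the outer norm of $Z^*_{(v^*_{K_i})}(n_i)$ involves the independent indices $K_{k_\ell}$. Because $V^*$ is left dominant, the comparison goes through provided $k_\ell\geq q_{t_{\ell-1}}$ (up to block-stability shifts), which can be arranged by selecting the $N_i$'s with sufficient spread when constructing them in the proof of Theorem \ref{T:1.4}; the flexibility afforded by Lemma \ref{L:1.3} is exactly what makes this possible. The remaining technical work is bookkeeping the residual contributions from $\ell$ with $\|y^*_\ell\|<\delta_\ell$ and from the even/odd splitting, all of which are absorbed in universal constants.
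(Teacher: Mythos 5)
Your overall architecture coincides with the paper's: invoke Corollary \ref{C:blocking} to get $C_0$, $(K_i)$, $(p_i)$, $(\delta_i)$; feed $(p_i)$ and $(\delta_i)$ (with $\sum\delta_i$ small) into Theorem \ref{T:1.4}; take $n_i=p_{q_{N_i}}$ and the space $Z^*_{(v^*_{K_i})}(n_i)$; get $T^*$ for free from $\|\cdot\|_{Z^*}\leq\|\cdot\|_{\bar Z^*}$ and Theorem \ref{T:CL1.8}; and prove boundedness of $S^*$ by decomposing $y^*=\sum y^*_\ell$, using (c) to pass from $\|P_{[n_{k_\ell},n_{k_{\ell+1}})}S^*y^*\|_{Z^*}$ to $\|y^*_{\ell-1}\|+\|y^*_\ell\|+\|y^*_{\ell+1}\|$, and using block stability to absorb the three-fold index shift. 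All of that matches.

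The genuine gap is the parity split. To exploit the domination $(y^*_\ell/\|y^*_\ell\|)_{\ell\ \mathrm{even}}\succeq_{C_0}(v^*_{\cdot})$ you must substitute $a_\ell=\|y^*_\ell\|$ and then bound $\|\sum_{\ell\ \mathrm{even}}y^*_\ell\|$ from above by a multiple of $\|y^*\|$. But $\sum_{\ell\ \mathrm{even}}y^*_\ell=\sum_{\ell\ \mathrm{even}}\sum_{j=p_{q_{t_{\ell-1}}}}^{p_{q_{t_\ell}}-1}y^*(x_j)f_j$ is a sum of the frame expansion of $y^*$ over a union of infinitely many separated intervals, and such ``every-other-block'' sums are not bounded operators for a general Schauder frame (or even for a conditional Schauder basis); the frame constant only controls sums over single intervals. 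You cannot recover it from $y^*$ minus the odd part either, since that is circular. The paper avoids this precisely by \emph{not} splitting: it applies the domination once to the entire sequence $(y^*_\ell/\|y^*_\ell\|)_{\ell\in A}$, where $A=\{\ell:\|y^*_\ell\|>\delta_\ell\}$, and controls $\|\sum_{\ell\in A}y^*_\ell\|\leq\|y^*\|+\sum_{\ell\notin A}\|y^*_\ell\|\leq\|y^*\|+\vp$, because the only terms removed are those with $\|y^*_\ell\|<\delta_\ell$, which are absolutely summable. That one inequality is the step that makes the argument work without any unconditionality, and your parity device loses it. (Your instinct that consecutive $y^*_\ell$'s are merely adjacent rather than skipped, and that the indices $K_{q_{t_{\ell-1}}}$ must be realigned with $K_{k_\ell}$ via left dominance of $V^*$ and block stability, is a fair reading of a point the paper treats lightly; but the fix has to preserve the ``full sequence plus summable error'' structure rather than discard every other block.)
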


\begin{proof}
After renorming, we may assume that
 the basis $(z_i)_{i=1}^\infty$ is bimonotone.
 The sequence $(f_i,x_i)_{i=1}^\infty$ is a boundedly complete Schauder frame for $X^*$ by Theorem \ref{T:CL2.3}, and we have that $X^*$ satisfies subsequential
 $V^*$ lower $w^*$ tree estimates by Lemma \ref{L:FOSZ2.7}.
 By Theorem \ref{T:CL1.8}, the basis $(z^*_i)_{i=1}^\infty$ is an associated basis for $(f_i,x_i)_{i=1}^\infty$ with bounded operators $S^*:X^*\rightarrow Z^*$ and
 $T^*:Z^*\rightarrow X$
 given by $S^*(x^*)=\sum x^*(x_i)z^*_i$ for all $x^*\in X^*$ and $S^*(z^*)=\sum z^*(z_i) f_i$ for all $z^*\in Z^*$.
 Let $\vp>0$.
By Corollary \ref{C:blocking}, there exists $C>0$,
$(K_i)_{i=1}^\infty,(p_i)_{i=1}^\infty\in[\N]^\omega$, and
$\bar{\delta}=(\delta_i)\subset(0,1)$ with $\delta_i\searrow0$ and
$\sum \delta_i<\vp$
 such that if $(y^*_i)_{i=1}^\infty\subset S_{X^*}$ and $(r_i)_{i=0}^\infty\in[\N]^\omega$ so that
 $\sup_{1\leq n\leq m\leq p_{r_{i-1}+1}}\|\sum_{j=n}^m y_i^*(x_j) f_j\|<\delta_i$
and
 $\sup_{p_{r_{i}}\leq n\leq m}\|\sum_{j=n}^m y_i^*(x_j) f_j\|<\delta_i$
 then $(y^*_i) \succeq_C (v^*_{K_{r_{i-1}}})$.  We apply Theorem \ref{T:1.4} to $(x_i,f_i)_{i=1}^\infty$, $(p_i)_{i=1}^\infty\in[\N]^\omega$, and
 $(\delta_i)_{i=1}^\infty\subset(0,1)$ to obtain $(q_i),(N_i)_{i=1}^\infty\in[\N]^\omega$ satisfying the conclusion of Theorem \ref{T:1.4}.

 By Theorem \ref{T:CL1.8}, $(z^*_i)_{i=1}^\infty$ is an associated basis for $(f_i,x_i)_{i=1}^\infty$, and we denote the norm on
 $Z^*$ by $\|\cdot\|_{Z^*}$.  We block the basis $(z^*_i)_{i=1}^\infty$ into an FDD by setting $E_i=span_{j\in[p_{q_{N_i}},p_{q_{N_{i+1}}})}z^*_j$ for all $i\in\N$.
 We now define a new norm $\|\cdot\|_{\bar{Z}^*}$  on $span(z^*_i)_{i=1}^\infty$ by
 $$\|\sum a_i z^*_i\|_{\bar{Z}^*}=\max_{M\in\N, 1\leq r_0\leq r_1<\cdots<{r_M}}
 \left\Vert\sum_{i=1}^M\left\|\sum_{j=p_{q_{N_{r_i}}}}^{p_{q_{N_{r_{i+1}}}}-1}a_j z^*_j\right\|_{Z^*}v^*_{K_{N_{r_i}}}\right\Vert_{V^*}\qquad\textrm{ for all }(a_i)\in\coo.
$$

 We let $\bar{Z}^*$ be the completion of
$span(z^*_i)_{i=1}^\infty$ under the norm $\|\cdot\|_{\bar{Z}^*}$.
Note that $\|z^*\|_{Z^*}\leq\|z^*\|_{\bar{Z}^*}$ for all $z^*\in
Z^*$.  As $(v_i^*)_{i=1}^\infty$ is block stable, there exists a
constant $c\geq 1$ such that $(v^*_{n_i})_{i=1}^\infty\approx_c
(v^*_{n_{i+1}})_{i=1}^\infty$ for all
$(n_i)_{i=1}^\infty\in[\N]^\omega$. We now show that
$\|S^*(y^*)\|_{\bar{Z}^*}\leq{(1+2\vp)3cC} \|y^*\|$ for all $y^*\in
X^*$.

Let $y^*\in X^*$,
$M\in\N$, and  $1\leq r_0\leq r_1<\cdots<{r_M}$.  We will show that $(1+2\vp)3cC\|S\|\|S(y^*)\|\geq
\left\Vert\sum_{i=1}^M\left\|\sum_{j=p_{q_{N_{r_i}}}}^{p_{q_{N_{r_{i+1}}}}-1} y^*(x_j)z^*_j\right\|v^*_{p_{N_{r_i}}}\right\Vert_{V^*}$.
By Theorem \ref{T:1.4}, there exists $y^*_i\in X^*$ and $t_i\in (N_{r_{i-1}-1},N_{r_{i-1}})$ for all
$i\in\N$ with $N_0=0$ and $t_0=0$ such that
\begin{enumerate}
\item[(a)] $y^*=\sum_{i=1}^\infty y^*_i$\\
and for all $i\in\N$ we have
\item[(b)]either $\|y^*_i\|<\delta_i$ or $\sup_{p_{q_{t_{i-1}}}\geq n\geq m}\|\sum_{j=m}^n y^*_i(x_j)f_j\|<\delta_i\|y^*_i\|$ and\\
$\|\sup_{n\geq m\geq p_{q_{t_{i}}}}\|\sum_{j=m}^n y^*_i(x_j)f_j\|<\delta_i\|y_i^*\|$,
\item[(c)] $\|P^*_{[p_{q_{N_{k_i}}}, p_{q_{N_{k_{i+1}}}})}\circ S^*(y^*_{i-1}+y^*_i+y^*_{i+1}-y^*)\|_{Z^*}<\delta_i $
\end{enumerate}
We let $A=\{i\in\N\,:\, \|y^*_i\|>\delta_i\}$.  By our choice of
$(p_i)_{i=1}^\infty$, we have that $(y^*_i/\|y^*_i\|)_{i\in
A}\succeq_C (v^*_{K_{r_{i-1}}})_{i\in A}$.  Thus, $C\|\sum_{i\in A}
y^*_i\|\geq \|\sum _{i\in A}\|y^*_i\|v^*_{K_{r_{i-1}}}\|_{V^*}$.  We
now obtain the following lower estimate for $\|y^*\|$.
\begin{align*}
\|y^*\|&=\|\sum y^*_i\|  \qquad\qquad\textrm{ by }(a)\\
&\geq \|\sum_{i\in A} y^*_i\|+\sum_{i\not\in A}\|y^*_i\|-\vp\qquad\qquad\textrm{ as }\sum\delta_i<\vp\\
&\geq \frac{1}{C}\|\sum_{i\in
A}\|y^*_i\|v^*_{K_{r_{i-1}}}\|_{V^*}+\sum_{i\not\in
A}\|y^*_i\|-\vp\qquad\qquad\textrm{ as } C\|\sum_{i\in A}
y^*_i\|\geq \|\sum _{i\in A}\|y^*_i\|v^*_{K_{r_{i-1}}}\|_{V^*}\\
&\geq \frac{1}{C}\|\sum \|y^*_i\|v^*_{K_{r_{i-1}}}\|_{V^*}-\vp\\
&\geq \frac{1}{3cC}(\|\sum \|y^*_i\|v^*_{K_{r_{i-1}}}\|_{V^*}+\|\sum \|y^*_{i-1}\|v^*_{K_{r_{i-1}}}\|_{V^*}
+\|\sum \|y^*_{i+1}\|v^*_{K_{r_{i-1}}}\|_{V^*})-\vp\\
&\geq \frac{1}{3cC}\|\sum \|y^*_{i-1}+y^*_i+y^*_{i+1}\|v^*_{K_{r_{i-1}}}\|_{V^*}-\vp\\
&\geq \frac{1}{3cC\|S\|}\|\sum \|S^*(y^*_{i-1}+y^*_i+y^*_{i+1})\|_{Z^*}v^*_{K_{r_{i-1}}}\|_{V^*}-\vp\qquad\qquad\textrm{ as $(v^*_i)$ is 1-unconditional}\\
&\geq \frac{1}{3cC\|S\|}\|\sum \|P^*_{[p_{q_{N_i}}, p_{q_{N_{i+1}}})}S^*(y^*_{i-1}+y^*_i+y^*_{i+1})\|_{Z^*}v^*_{K_{r_{i-1}}}\|_{V^*}-\vp\qquad\textrm{ as $(z_i)$ is bimonotone}\\
&\geq \frac{1}{3cC\|S\|}\|\sum \|\sum_{j=p_{q_{N_{i}}}}^{p_{q_{N_{i+1}}}-1} y^*(x_j)z_j\|v^*_{K_{r_{i-1}}}\|_{V^*}-2\vp\qquad\qquad\textrm{ by }(c)\\
\end{align*}
Thus we have that $\|S^*y^*\|_{\bar{Z}^*}\leq {(1+\vp)3cC\|S\|} \|y^*\|$ for all $y^*\in X^*$.  Hence, $S^*:X^*\rightarrow {\bar{Z}^*}$ is
an isomorphism.
We have that $T^*:Z^*\rightarrow X^*$ is bounded, and hence $T^*:\bar{Z}^{*}\rightarrow X^*$ is bounded as well, as
$\|z^*\|_{Z^*}\leq\|z^*\|_{\bar{Z}^{*}}$ for all $z^*\in Z^*$.  Thus, $\bar{Z}^{*}$ is an associated space of $X^*$.
\end{proof}

The following theorem can be thought of as an extension of Theorem
1.1 in \cite{FOSZ} to frames.

\begin{thm}\label{T:1.6}
Let $X$ be a Banach space with a shrinking Schauder frame
$(x_i,f_i)_{i=1}^\infty$. Let $(v_i)_{i=1}^\infty$ be a normalized,
1-unconditional, block stable, right dominant, and shrinking basic
sequence. If $X$ satisfies subsequential $(v_i)_{i=1}^\infty$ upper
tree estimates, then there exists
$(n_i)_{i=1}^\infty,(K_i)_{i=1}^\infty\in[\N]^\omega$ and an
associated space $Z$ with a shrinking basis $(z_i)_{i=1}^\infty$
such that the FDD $(span_{j\in[n_i,n_{i+1})}z_i)_{i=1}^\infty$
satisfies subsequential
 $(v_{K_i})_{i=1}^\infty$ upper block estimates.
\end{thm}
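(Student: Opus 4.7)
The plan is to combine Theorem \ref{T:1.5} with a standard pre-dual construction and the duality of upper/lower block estimates given by Proposition \ref{P:OSZ2.14}. First I would apply Theorem \ref{T:1.2} to pass to an associated basis $(y_i)$ of some Banach space $Y$ for $(x_i,f_i)$ such that $(x_i,f_i)$ is strongly shrinking relative to $(y_i)$; this puts us in a position to invoke Theorem \ref{T:1.5}. Applying Theorem \ref{T:1.5} to the data $(x_i,f_i)$, $(y_i)$ and $V$ yields sequences $(n_i),(K_i)\in[\N]^\omega$ and an associated space $W := Y^*_{(v^*_{K_i})}(n_i)$ of the dual frame $(f_i,x_i)$ of $X^*$, with basis $(y_i^*)$ and bounded operators $S^*:X^*\to W$, $T^*:W\to X^*$ given by the natural formulas.

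The second step is to pass to the pre-dual. Since $V$ is shrinking, $V^*=(v_i^*)$ is boundedly complete, and (as noted in the paper) block-stability dualizes so $V^*$ is also 1-unconditional and block-stable. Proposition \ref{L:OSZ3.2} then gives that $(y_i^*)$ is a boundedly complete basis of $W$ and that the blocking $F_i^* := [y_j^* : n_i\leq j<n_{i+1}]$ satisfies subsequential $(v^*_{K_i})$-lower block estimates in $W$. I then define $Z$ to be the closed linear span in $W^*$ of the sequence $(z_i)$ determined by $z_i(y_j^*)=\delta_{ij}$. By the standard pre-duality theory for boundedly complete bases, $(z_i)$ is a shrinking basis of $Z$ with $Z^*=W$ and $z_i^*=y_i^*$ under this identification.

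To verify that $Z$ is an associated space of $(x_i,f_i)$, I would use the inequality $\|y^*\|_{Y^*}\leq\|y^*\|_W$ for $y^*\in \mathrm{span}(y_i^*)$, which is stated explicitly in the proof of Theorem \ref{T:1.5}. Dualizing on finitely supported sequences yields $\|\sum a_i z_i\|_Z\leq\|\sum a_i y_i\|_Y$, so convergence of $T_Y(x)=\sum f_i(x) y_i$ in $Y$ forces $T(x):=\sum f_i(x)z_i$ to converge in $Z$, giving a bounded operator $T:X\to Z$ with $\|T\|\leq\|T_Y\|$. For the reconstruction operator $S:Z\to X$ given by $S(z)=\sum z_i^*(z) x_i$, the identity $x^*(\sum a_i x_i)=\langle \sum a_i z_i,\, S^*(x^*)\rangle$ shows that $\|S(z)\|_X\leq\|S^*\|\|z\|_Z$ on finitely supported $z$; a standard Cauchy-sequence argument via the basis projections of $Z$ then extends $S$ to all of $Z$ and establishes convergence of the series.

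The final step is to read off the FDD estimates. The blocking $F_i := [z_j : n_i\leq j<n_{i+1}]$ in $Z$ is the pre-dual FDD of $(F_i^*)$ in $Z^*=W$, and I already have subsequential $(v^*_{K_i})$-lower block estimates for $(F_i^*)$. Since $V$ is shrinking, $(v_i^*)^{(*)}=(v_i)$, and Proposition \ref{P:OSZ2.14} converts lower estimates into upper estimates on the pre-dual, giving that $(F_i)$ satisfies subsequential $(v_{K_i})$-upper block estimates in $Z$, which is precisely the conclusion. The main obstacle is the middle step: showing that the pre-dual construction actually produces an associated space for the frame. The key observation making this work is the norm domination $\|\cdot\|_{Y^*}\leq\|\cdot\|_W$ built into Theorem \ref{T:1.5}, which dualizes to $\|\cdot\|_Z\leq\|\cdot\|_Y$ and lets convergence of $T$ be inherited from the underlying associated space $Y$.
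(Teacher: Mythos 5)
Your proposal is correct and follows essentially the same route as the paper: pass to a strongly shrinking associated basis via Theorem \ref{T:1.2}, apply Theorem \ref{T:1.5} to obtain $Y^*_{(v^*_{K_i})}(n_i)$ as an associated space of the dual frame, take its predual (which has a shrinking basis precisely because $(y_i^*)$ is boundedly complete there, by Proposition \ref{L:OSZ3.2}), and convert the lower block estimates to upper ones via Proposition \ref{P:OSZ2.14}. The only difference is in the verification that the predual is an associated space: you use the direct norm estimates $\|\cdot\|_Z\leq\|\cdot\|_Y$ and $\|S(z)\|_X\leq\|S^*\|\|z\|_Z$, whereas the paper checks that $S^*$ and $T^*$ are $w^*$-to-$w^*$ continuous and hence are adjoint operators; both arguments are valid.
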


\begin{proof}
As $(x_i,f_i)_{i=1}^\infty$ is a shrinking Schauder frame, it is strongly shrinking relative to some associated basis
$(z_i)_{i=1}^\infty$ for a Banach space $Z$ by Theorem \ref{T:1.2}.  We thus have bounded operators $T:X\rightarrow Z$ and $S:Z\rightarrow X$
 defined by $T(x)=\sum f_i(x)z_i$ for all $x\in X$ and $S(z)=\sum z^*_i(z) x_i$ for all $z\in Z$.  By Theorem \ref{T:1.5},
 $Z^*_{(v^*_{K_i})}(n_i)$ is an associated space of $(f_i,x_i)_{i=1}^\infty$ with bounded operators $S^*:X^*\rightarrow Z^*_{(v^*_{K_i})}(n_i)$ and
 $T^*:Z^*_{(v^*_{K_i})}(n_i)\rightarrow X$
 given by $S^*(x^*)=\sum x^*(x_i)z^*_i$ for all $x^*\in X^*$ and $S^*(z^*)=\sum z^*(z_i) f_i$ for all $z^*\in Z^*_{(v^*_{K_i})}(n_i)$.
   We define $\bar{Z}$ as the
 completion of $[z_i]_{i=1}^\infty$ under the norm  $\|\sum a_i z_i\|_{\bar{Z}}=\sup_{z^*\in B_{Z^*_{(v^*_{K_i})}(n_i)}}z^*(\sum a_i z_i)$.  As $(z^*_i)_{i=1}^\infty$ is a boundedly
  complete basis of $Z^*_{(v^*_{K_i})}(n_i)$ by Lemma \ref{L:OSZ3.2}, we have that $(z_i)_{i=1}^\infty$ is a shrinking basis for $\bar{Z}$ and that the dual of $\bar{Z}$
  is $Z^*_{(v^*_{K_i})}(n_i)$.  We now prove that $\bar{Z}$ is an associated space of $(x_i,f_i)_{i=1}^\infty$.

  If $(x^*_i)_{i=1}^\infty\subset X^*$ and $x^*_i\rightarrow_{w^*}0$ then $(S^*(x_i^*))_{i=1}^\infty$ converges $w^*$ to 0 as a sequence
in $Z^*$.  Thus $((S^*(x_i^*))(z_j))_{i=1}^\infty$ converges to 0
for all $j\in\N$.  Lemma \ref{L:OSZ3.2} gives that
$(z^*_i)_{i=1}^\infty$
 is a boundedly complete basis for $Z^*_{(v^*_{K_i})}(n_i)$, and hence converging $w^*$ to 0 in $Z^*_{(v^*_{K_i})}(n_i)$ is equivalent to converging coordinate wise to 0.  Hence, $(S^*(x_i^*))_{i=1}^\infty$
converges $w^*$ to 0 in $Z^*_{(v^*_{K_i})}(n_i)$.  Thus
$S^*:X^*\rightarrow Z^*_{(v^*_{K_i})}(n_i)$ is $w^*$ to $w^*$
continuous, and hence is a dual operator.  Thus, $S:
\bar{Z}\rightarrow X$.

If $(z^*_i)_{i=1}^\infty\subset Z^*_{(v^*_{K_i})}(n_i)$ converges
$w^*$ to 0 in $Z^*_{(v^*_{K_i})}(n_i)$, then $(z^*_i)_{i=1}^\infty$
converges coordinate wise to 0, and hence $(z^*_i)_{i=1}^\infty$
converges $w^*$ to 0 in $Z^*$.  Thus,
$(T^*(z^*_i))_{i=1}^\infty\rightarrow_{w^*}0$ in $X^*$.  We thus
have that $T^*:Z^*_{(v^*_{K_i})}(n_i)\rightarrow X^*$ is $w^*$ to
$w^*$ continuous, and is hence a dual operator.  Thus,
$T:X\rightarrow \bar{Z}$ is bounded.  This gives us that, $\bar{Z}$
is an associated space for $(x_i,f_i)_{i=1}^\infty$. By Lemma
\ref{L:OSZ3.2} we have that the FDD
$(span_{j\in[p_{N_i},p_{N_{i+1}})}z^*_j)_{i=1}^\infty$ satisfies
$(v^*_{K_i})$ lower block estimates in $\overline{Z}^*$, and hence
$(span_{j\in[p_{N_i},p_{N_{i+1}})}z_j)_{i=1}^\infty$ satisfies
$(v_{K_i})_{i=1}^\infty$ upper block estimates in $\overline{Z}$.
\end{proof}

The following theorem can be thought of as an extension of Theorem
4.6 in \cite{OSZ1} to frames.

\begin{thm}\label{T:1.7}
Let $X$ be a Banach space with a shrinking and boundedly complete
Schauder frame $(x_i,f_i)_{i=1}^\infty$. Let $(u_i)_{i=1}^\infty$ be
a normalized, 1-unconditional, block stable, right dominant, and
shrinking basic sequence, and let
 $(v_i)_{i=1}^\infty$ be a normalized, 1-unconditional, block stable, left dominant, and shrinking basic sequence such that $(u_i)$ dominates $(v_i)$.
Then $X$ satisfies subsequential $(u_i,v_i)_{i=1}^\infty$ tree
estimates, if and only if there exists
$(n_i)_{i=1}^\infty,(K_i)_{i=1}^\infty\in[\N]^\omega$ and a
reflexive associated space $Z$ with a basis $(z_i)_{i=1}^\infty$
such that the FDD $(span_{j\in[n_i,n_{i+1})}z_j)_{i=1}^\infty$
satisfies subsequential
 $(u_{K_i},v_{K_i})_{i=1}^\infty$ block estimates.
\end{thm}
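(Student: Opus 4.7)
The plan is to prove both directions of the equivalence.

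For the reverse implication, given an associated space $Z$ as in (c), a weakly null even tree $(x_\alpha)_{\alpha\in T_\infty^{\mathrm{even}}}$ in $X$ maps via the bounded operator $T:X\to Z$ to a weakly null even tree in $Z$; a standard gliding hump argument against the FDD $(\mathrm{span}_{j\in[n_i,n_{i+1})}z_j)_{i=1}^\infty$ extracts a branch whose images are within $2^{-\ell}$ of a block sequence of the FDD whose $E$-supports start near the prescribed indices, and the $(u_{K_i})$-upper block estimate together with $\|T\|\,\|S\|$ yields the $(u_{K_i})$-upper tree estimate on $X$. Since $Z$ is reflexive, $(z_i^*)$ is a basis of $Z^*$ and by Proposition~\ref{P:OSZ2.14} the dual FDD in $Z^*$ satisfies $(v_{K_i}^*)$-upper block estimates; the symmetric argument for $w^*$-null even trees in $X^*$, using $S^*:X^*\to Z^*$, yields the $(v_{K_i})$-lower $w^*$-tree estimate on $X^*$.

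For the forward implication, the strategy is to start with Theorem~\ref{T:1.6} and then adjust the norm to also encode the lower estimates. First, apply Theorem~\ref{T:1.6} to $(x_i,f_i)_{i=1}^\infty$ with the basic sequence $(u_i)_{i=1}^\infty$ to obtain $(n_i),(K_i)\in[\N]^\omega$ and an associated space $Z$ with shrinking basis $(z_i)_{i=1}^\infty$ whose FDD $F_i=\mathrm{span}_{j\in[n_i,n_{i+1})}z_j$ satisfies subsequential $(u_{K_i})$-upper block estimates. Then apply the $Z_V$-construction of Proposition~\ref{L:OSZ3.2} with $V=(v_{K_i})_{i=1}^\infty$ and partition points $(n_i)$, passing to a renorming $\widehat Z:=Z_V(n_i)$ of the span of $(z_i)$, in which the blocked FDD $(F_i)$ acquires $(v_{K_i})$-lower block estimates while retaining $(u_{K_i})$-upper block estimates (since $(u_i)$ dominates $(v_i)$), and on which $(z_i)$ remains shrinking (since $(v_i)$ and $(z_i)$ are shrinking).

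The main obstacle is twofold. First, one must show $\widehat Z$ is an associated space for $(x_i,f_i)$: boundedness of $S:\widehat Z\to X$ is immediate from $\|\cdot\|_{\widehat Z}\ge\|\cdot\|_Z$, but $T:X\to\widehat Z$ requires a delicate estimate of $\|T(x)\|_{\widehat Z}$, which I would obtain by applying Theorem~\ref{T:1.4} to decompose $x^*\in X^*$ into skip-block pieces with small tail and initial interactions across the partition $(n_i)$, and then bounding each block contribution using the $(u_{K_i})$-upper block estimate in $Z$ together with the $(u_i)$-dominates-$(v_i)$ property to control the resulting $V$-norm. Second, and harder, is to show $\widehat Z$ is reflexive; since $(z_i)$ is already shrinking in $\widehat Z$, this reduces to bounded completeness of $(z_i)$ in $\widehat Z$, which does not follow from Proposition~\ref{L:OSZ3.2} alone because $(v_i)$ is not assumed boundedly complete. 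To bridge this gap I would argue by duality: by Theorem~\ref{T:CL2.4} $X$ is reflexive, so $(f_i,x_i)$ is a shrinking frame for $X^*$, and a dual application of Lemma~\ref{L:FOSZ2.7} converts the $(v_i)$-lower tree estimates on $X$ into $(v_i^*)$-upper tree estimates on $X^*$. Applying Theorem~\ref{T:1.6} to $(f_i,x_i)$ on $X^*$ with $(v_i^*)$ yields an associated space $W$ of $X^*$ whose shrinking basis satisfies $(v_{K_i}^*)$-upper block estimates on a suitable FDD. By Theorem~\ref{T:CL1.8} and Proposition~\ref{P:OSZ2.14}, $W^*$ is then a boundedly complete associated space of $X$ whose blocked basis has $(v_{K_i})$-lower block estimates; after aligning $\widehat Z$ and $W^*$ via their common reconstruction maps into $X$ and passing to a common refinement of the blockings, one identifies $\widehat Z$ with $W^*$ up to equivalence of norms, thereby furnishing the bounded completeness of $(z_i)$ in $\widehat Z$ and hence the reflexivity of $\widehat Z$.
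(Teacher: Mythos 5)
Your reverse implication is the standard pull-back of trees through $T$ and $S$ (the paper does not even write it out), and the skeleton of your forward implication --- Theorem \ref{T:1.6} with $(u_i)$ followed by a $Z_V$-renorming with $(v_{K_i})$ --- is exactly the paper's. But both of the steps you yourself single out as the main obstacles are where your argument genuinely breaks. For the boundedness of $T:X\to\widehat Z=Z_V(n_i)$: to control $\|T(x)\|_{\widehat Z}$ you must decompose the vector $x\in X$, not $x^*\in X^*$, into almost-skipped-block pieces, and the estimate that tames the outer $V$-norm of the block norms is the subsequential $(v_i)$-\emph{lower} tree estimate on $X$ --- the one hypothesis you never invoke here. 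The $(u_{K_i})$-upper block estimates plus ``$(u_i)$ dominates $(v_i)$'' give inequalities pointing the wrong way and cannot bound $\bigl\|\sum_i\|P_{I_i}T(x)\|_Z\,v_{K_{r_i}}\bigr\|_V$ by $\|x\|$. The paper packages this step by dualizing: since $X$ is reflexive (Theorem \ref{T:CL2.4}), $(f_i,x_i)$ is a shrinking frame for $X^*$ strongly shrinking relative to $(z_i^*)$ (Corollary \ref{C:1.1}), $X^*$ satisfies subsequential $(v_i^*)$-upper tree estimates (Lemma \ref{L:FOSZ2.7}), and Theorem \ref{T:1.5} applied to $X^*$, $(f_i,x_i)$, $(z_i^*)$, $(v_i^*)$ hands you $Z_{(v_{K_i})}(n_i)$ as an associated space of $(x_i,f_i)$ --- i.e.\ the dual-frame version of Theorem \ref{T:1.4} together with the lower tree estimates that your sketch omits.

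The second gap is your route to reflexivity. The claim that after ``aligning'' reconstruction maps ``one identifies $\widehat Z$ with $W^*$ up to equivalence of norms'' is unjustified and in general false: associated spaces of a fixed frame are highly non-unique, and the formal identity on the coordinate vectors between two of them need not be bounded in either direction (compare the minimal associated space of \cite{CDOSZ}, whose reconstruction operator generically contains $c_0$ in its kernel, with any reflexive associated space of the same frame). So producing a second, boundedly complete associated space $W^*$ tells you nothing about $\widehat Z$. Your underlying worry --- that Proposition \ref{L:OSZ3.2} gives bounded completeness of $Z_V(n_i)$ only when $(v_i)$ is boundedly complete --- is fair, but the resolution is to read reflexivity off the block estimates of the single space $Z_{(v_{K_i})}(n_i)$ (shrinking from the $(u_{K_i})$-upper estimates with $(u_i)$ shrinking, boundedly complete from the $(v_{K_i})$-lower estimates; in the intended application $V=T^*_{\alpha,c}$ is reflexive), not to compare two associated spaces. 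A smaller point: the data $(n_i),(K_i)$ for the $V$-side must be extracted as further subsequences of the $U$-side data --- this is precisely where the left dominance of $(v_i)$ is used --- rather than reused unchanged as in your construction of $\widehat Z$.
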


\begin{proof}

By Theorem \ref{T:1.6}, $(x_i,f_i)_{i=1}^\infty$ has an associated
basis $(z_i)_{i=1}^\infty$ such that there exists
$(m_i)_{i=1}^\infty,(k_i)_{i=1}^\infty\in[\N]^\omega$ and an
associated space $Z$ with a shrinking basis $(z_i)_{i=1}^\infty$
such that the FDD $(span_{j\in[n_i,n_{i+1})}z_j)_{i=1}^\infty$
satisfies subsequential
 $(u_{K_i})_{i=1}^\infty$ upper block estimates.  We have that $(f_i,x_i)_{i=1}^\infty$ is a shrinking frame for $X^*$ which is strongly shrinking relative to the
 associated basis $(z^*_i)_{i=1}^\infty$ by Corollary \ref{C:1.1}.  The space $X$ satisfying subsequential $(v_i)_{i=1}^\infty$ lower tree estimates implies
 that $X^*$ satisfies subsequential $(v^*_i)_{i=1}^\infty$ upper tree estimates.  Thus we may apply Theorem \ref{T:1.5} to
the space $X^*$, the frame $(f_i,x_i)_{i=1}^\infty$ and the
associated basis $(z^*_i)_{i=1}^\infty$ to
 obtain
$(n_i)_{i=1}^\infty,(K_i)_{i=1}^\infty\in[\N]^\omega$ such that
$Z_{(v_{K_i})}(n_i)$ is an associated space of
$(x_i,f_i)_{i=1}^\infty$.  Furthermore, we may assume that
$(n_i)_{i=1}^\infty$ is a subsequence of $(m_i)_{i=1}^\infty$ and
that $(K_i)_{i=1}^\infty$ is a subsequence of $(k_i)_{i=1}^\infty$
as $(v_i)_{i=1}^\infty$ is left dominant. By Lemma \ref{L:OSZ3.2},
the FDD $(span_{j\in[n_i,n_{i+1})}z_j)_{i=1}^\infty$ satisfies
subsequential
 $(u_{K_i},v_{K_i})_{i=1}^\infty$ block estimates.
\end{proof}

We now show that Theorem \ref{T:mainUpper} follows immediately from
Theorem \ref{T:1.6}.  For the same reason, Theorem
\ref{T:mainReflexive} follows immediately from Theorem \ref{T:1.7}.

\begin{proof}[Proof of Theorem \ref{T:mainUpper}] Let $(x_i, f_i)_{i=1}^\infty$
be a shrinking Schauder frame for a Banach space $X$ and let
$\alpha$ be a countable ordinal. The equivalences
$(a)\Longleftrightarrow(b)$ are given in \cite{OSZ2}.

Let $(t_i)_{i=1}^\infty$ be the unit vector basis for
$T_{\alpha,c}$, where $0<c<1$ is some constant. If a Banach space
has an FDD satisfying subsequential $(t_{K_i})_{i=1}^\infty$ upper
block estimates for some sequence
$(K_i)_{i=1}^\infty\in[\N]^\omega$, then it satisfies subsequential
$T_{\alpha,c}$-upper tree estimates. Thus if $(x_i,
f_i)_{i=1}^\infty$ has an associated space with an FDD satisfying
subsequential $(t_{K_i})_{i=1}^\infty$ upper block estimates, then
$X$ embeds into a Banach space satisfying subsequential
$T_{\alpha,c}$-upper tree estimates. Hence, $X$ itself would satisfy
subsequential $T_{\alpha,c}$-upper tree estimates.  Thus
$(c)\Longrightarrow(b)$.

The unit vector basis, $(t_i)_{i=1}^\infty$, for $T_{\alpha,c}$ is a
normalized, 1-unconditional, block stable, right dominant, and
shrinking basic sequence.  Thus $(b)\Longrightarrow(c)$ by Theorem
\ref{T:1.6}.
\end{proof}

\end{document}